\newtheorem{theoremA}{Theorem A\!\!}
\newtheorem{theoremB}{Theorem B\!\!}
\newtheorem{theoremC}{Theorem C\!\!}
\newtheorem{theoremD}{Theorem D\!\!}
\newtheorem{theorem}{Theorem}[section]
\newtheorem{conjecture}{Conjecture}[section]
\newtheorem{lemma}[theorem]{Lemma}
\newtheorem{remark}[theorem]{Remark}
\theoremstyle{definition}
\begin{document} 

\title[Variance of sums of arithmetic functions over short intervals]{On the variance of sums of arithmetic functions over primes in short intervals and pair correlation for $L$-functions in the Selberg class}

\begin{abstract}
We establish the equivalence of conjectures concerning the pair correlation of zeros of $L$-functions in the Selberg class and the variances of sums of a related class of arithmetic functions over primes in short intervals. This extends the results of Goldston \& Montgomery [\textbf{\ref{GM}}] and Montgomery \& Soundararajan [\textbf{\ref{MS}}] for the Riemann zeta-function to other $L$-functions in the Selberg class.  Our approach is based on the statistics of the zeros because the analogue of the Hardy-Littlewood conjecture for the auto-correlation of the arithmetic functions we consider is not available in general. One of our main findings is that the variances of sums of these arithmetic functions over primes in short intervals have a different form when the degree of the associated $L$-functions is 2 or higher to that which holds when the degree is 1 (e.g. the Riemann zeta-function).  Specifically, when the degree is 2 or higher there are two regimes in which the variances take qualitatively different forms, whilst in the degree-1 case there is a single regime.  
\end{abstract}

\author{H. M. Bui, J. P. Keating and D. J. Smith}
\address{School of Mathematics, University of Bristol\\
Bristol\\ BS8 1TW\\ UK}
\email{hung.bui@bristol.ac.uk}
\email{j.p.keating@bristol.ac.uk}
\email{dale.smith@bristol.ac.uk}

\thanks{ We gratefully acknowledge support from the Leverhulme Trust and under EPSRC Programme Grant EP/K034383/1
LMF: \textit{L}-Functions and Modular Forms.  JPK is also funded by a Royal Society Wolfson Research Merit Award, and a Royal Society Leverhulme Senior Research Fellowship.   We thank Professors Brian Conrey and Zeev Rudnick for helpful comments.}

\allowdisplaybreaks

\maketitle

\section{Introduction}

Let $\Lambda(n)$ denote the von Mangoldt function, defined by
\begin{equation*}
\Lambda(n) = \begin{cases} \log p & \mbox{if }n=p^k \mbox{ for some prime } p \mbox{ and integer } k \ge 1, \\ 0 & \mbox{otherwise.} \end{cases}
\end{equation*}
The prime number theorem implies that  
\begin{equation*}
\psi(x) := \sum_{n \leq x} \Lambda(n)= x+o(x)
\end{equation*}
as $x \to \infty$, and so determines the average of $\Lambda(n)$ over long intervals.
In many problems one needs to understand sums over shorter intervals.  This is more difficult, because the fluctuations in their values can be large.  To this end Goldston and Montgomery [\textbf{\ref{GM}}] initiated the 
study of the variances
\begin{equation} \label{Zeta variance delta}
V(X, \delta) := \int_{1}^{X} \Big(\psi(x+\delta x)-\psi(x) - \delta x\Big)^{2} dx
\end{equation}
and
\begin{equation} \label{Zeta variance h}
\tilde{V}(X, h) := \int_{1}^{X} \Big(\psi(x+h)-\psi(x) - h\Big)^{2} dx.
\end{equation}
For example, they put forward the following conjecture [\textbf{\ref{GM}}]:
\begin{conjecture}[Variance of primes in short intervals]
\label{GMcon}
For any fixed $\varepsilon>0$
\begin{equation*}
\tilde{V}(X, h)\sim hX\big(\log X-\log
h 
%- \gamma_E-\log 2\pi
\big)
\end{equation*}
uniformly for $1\leq h\leq X^{1-\varepsilon}$.
\end{conjecture}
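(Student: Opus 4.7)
The plan is to attack the conjecture by reducing it, via the explicit formula, to Montgomery's pair correlation conjecture (PCC) for the zeros of $\zeta(s)$, following the route opened by Goldston and Montgomery. The approach is necessarily conditional on the Riemann Hypothesis and PCC, and it converts statistical information about pairs of zeros directly into an asymptotic for $\tilde V(X,h)$.

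The first step is to apply, under RH, the truncated explicit formula
\[
\psi(x+h)-\psi(x)-h \;=\; -\sum_{|\gamma|\le T}\frac{(x+h)^{\rho}-x^{\rho}}{\rho}\;+\;O\Bigl(\frac{x\log^{2}(xT)}{T}\Bigr),
\]
with $\rho=\tfrac12+i\gamma$ and a parameter $T$ to be optimised (essentially a power of $X/h$). Squaring and integrating on $x\in[1,X]$ produces a double sum over pairs of ordinates whose $x$-integrals concentrate contributions near the diagonal; separating the $\gamma=\gamma'$ terms from the off-diagonal terms, the diagonal yields the natural candidate main term, of essential shape $X\sum_{|\gamma|\le T}4\sin^{2}(h\gamma/2)/|\gamma|^{2}$.

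The second step is to express this sum, together with the near-diagonal contributions, as an integral of Montgomery's pair correlation function
\[
F(\alpha,T)\;=\;\sum_{0<\gamma,\gamma'\le T} T^{i\alpha(\gamma-\gamma')}\,\frac{4}{4+(\gamma-\gamma')^{2}}
\]
against an explicit even kernel depending on $h$ and $X$, via a Plancherel-type identity. Substituting the known asymptotic $F(\alpha,T)\sim T|\alpha|$ for $|\alpha|\le 1$ (Montgomery's theorem, under RH) together with the conjectural $F(\alpha,T)\sim (T/2\pi)\log T$ for $|\alpha|\ge 1$, and evaluating the integral asymptotically, yields the predicted main term $hX(\log X-\log h)$.

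The principal obstacle is the Tauberian passage from asymptotics of $F(\alpha,T)$ to asymptotics of $\tilde V(X,h)$: for $h$ as large as $X^{1-\varepsilon}$ the kernel samples $F(\alpha,T)$ over a range of $\alpha$ much wider than that in which PCC is typically stated sharply, so one needs smoothing and positivity arguments (in the spirit of the Goldston and Montgomery Tauberian lemmas) to pass between sharp- and smooth-cutoff versions of $\tilde V(X,h)$ and to absorb losses from the far off-diagonal. The deeper obstruction, naturally, is that both RH and PCC remain open, so this scheme can at best yield a conditional result.
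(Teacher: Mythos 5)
The statement you are asked to prove is an open conjecture: the paper records it as Conjecture 1 precisely because no proof is known (it notes that only the function-field analogue has been established). Your proposal does not close this gap --- it derives the asymptotic conditionally on the Riemann Hypothesis and on Montgomery's Pair Correlation Conjecture (Conjecture 2 of the paper), both themselves open. You acknowledge this at the end, but it means what you have sketched is not a proof of the statement; it is at best a proof of the implication ``PCC implies Conjecture 1,'' which is the Goldston--Montgomery equivalence the paper cites as its starting point. Note also that to cover the full range $1\le h\le X^{1-\varepsilon}$ you need the pair correlation asymptotic uniformly for $T\le X\le T^{A}$ for every fixed $A$, i.e.\ the full strength of Conjecture 2; Montgomery's unconditional theorem for $|\alpha|\le 1$ controls only part of the kernel integral and cannot by itself produce the main term $hX(\log X-\log h)$.

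As to method, the conditional reduction you outline is essentially the Goldston--Montgomery route, which is also the template the paper generalises to the Selberg class, but the paper organises it differently: it first passes from the pair correlation function $\mathcal{F}_F(X,T)$ to the multiplicative-interval variance $V_F(X,\delta)$ via a Fej\'er-kernel Tauberian lemma (Lemma 2.1, applied with $|a(it)|^2=4(\sin\kappa t/t)^2$, $\kappa=\tfrac12\log(1+\delta)$), the explicit formula with $Z=X^2$, the exponential Tauberian step of Lemma 2.3, and a dyadic summation closed off by the Saffari--Vaughan bound (Lemma 2.4); only afterwards does it convert $V_F(X,\delta)$ into $\tilde V_F(X,h)$ by the change of variables and differencing argument of Theorem C. Your plan goes directly from $F(\alpha,T)$ to $\tilde V(X,h)$, which is workable in principle but leaves unexecuted exactly the delicate steps: the Plancherel identity relating the zero sum to the kernel integral, the control of the far off-diagonal for large $h$, and the passage between smoothed and sharp cutoffs. ``Contributions concentrate near the diagonal'' is a heuristic, not an estimate, and as written the proposal neither proves the conjecture nor fully establishes the conditional implication.
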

This conjecture remains open, but its analogue in the function field setting has recently been proved in the limit of large field size [\textbf{\ref{KR}}].

It is natural to try to compute the variances \eqref{Zeta variance delta} and \eqref{Zeta variance h} using the Hardy-Littlewood Conjecture for the auto-correlation of $\Lambda(n)$:
\begin{equation}
\label{HL}
\sum_{n\le X}\Lambda(n)\Lambda(n+k)\sim \mathfrak{S}(k)X
\end{equation}
as $X\rightarrow\infty$, where $\mathfrak{S}(k)$ is the singular series 
\begin{equation*}
\mathfrak{S}(k) =
 \begin{cases} 
2\prod_{p>2}
\left(1-\frac{1}{(p-1)^2}\right)\prod_{\substack{p>2\\ p|k}}\frac{p-1}{p-2} 
& \mbox{\quad if }k \mbox{ is even, }\\ 0 & \mbox{\quad if }k \mbox{ is odd. } \end{cases}
\end{equation*}
Montgomery and Soundararajan  [\textbf{\ref{MS}}] established that \eqref{HL}, subject to an assumption concerning the implicit error term, implies a more precise asymptotic for the variance $\tilde{V}(X, h)$ when $\log X\leq h\leq X^{1/2}$:
\begin{equation}
\label{Zeta variance h LOT}
\tilde{V}(X, h) = hX\big(\log X-\log
h - \gamma_0-\log 2\pi\big)+O_\varepsilon\Big(h^{15/16}X(\log X)^{17/16}+h^2X^{1/2+\varepsilon}\Big),
\end{equation}
where $\gamma_0$ is the Euler-Mascheroni constant.

An alternative approach to computing the variances \eqref{Zeta variance delta} and \eqref{Zeta variance h} is based on the connection with the Riemann zeta-function $\zeta(s)$ via
\begin{equation*}
\frac{ \zeta^\prime(s)}{\zeta(s)}=-\sum_{n=1}^\infty\frac{\Lambda (n)}{n^s}.
\end{equation*}
This links statistical properties of $\Lambda(n)$ to those of the zeros of the Riemann zeta-function.  Specifically, Goldston and Montgomery [\textbf{\ref{GM}}] proved that Conjecture \ref{GMcon} is equivalent to the following conjecture, due to Montgomery [\textbf{\ref{M}}], concerning the pair correlation of the non-trivial zeros $\frac12 +i\gamma$ of the Riemann zeta-function (in writing the zeros in this form one is assuming the Riemann Hypothesis):
\begin{conjecture}[Pair Correlation Conjecture]\label{SPC}
Let 
\[
\mathcal{F}(X,T)=\sum_{0<\gamma,\gamma'\leq T}X^{i(\gamma-\gamma')}w(\gamma-\gamma'),
\]
where $w(u)=\frac{4}{4+u^2}$.  Then 
%Under the assumption of the Riemann Hypothesis (RH) and the Twin Prime Conjecture, the following conjecture holds [\textbf{\ref{C}}].
for any fixed $A\geq1$ we have
\[
\mathcal{F}(X,T)\sim\frac{T\log T}{2\pi}
\]
uniformly for $T\leq X\leq T^{A}$.
\end{conjecture}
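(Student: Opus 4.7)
The plan is to attack Conjecture \ref{SPC} via Montgomery's explicit formula, which expresses a weighted zero-sum of the form $\sum_\gamma X^{i\gamma}/(1+(t-\gamma)^2/4)$ in terms of a Dirichlet series in $\Lambda(n)$.  Substituting this identity into $\mathcal{F}(X,T)$ and squaring out produces a main term coming from the diagonal pairs $n=n'$ together with an off-diagonal contribution involving sums of the shape $\sum_n \Lambda(n)\Lambda(n+k)$.  My strategy is to evaluate the diagonal unconditionally using the prime number theorem and to control the off-diagonal contributions using the Hardy--Littlewood conjecture \eqref{HL}.

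Concretely, assuming RH so that the zeros can be written $\tfrac{1}{2}+i\gamma$, I would first set
\[
G(t,X) := \sum_{\gamma} \frac{X^{i\gamma}}{1+(t-\gamma)^2/4}
\]
and derive, via contour integration from Montgomery's identity, an approximate formula of the schematic shape
\[
(X^{1/2}+X^{-1/2})\,G(t,X) = -2\pi\,X^{-1/2}\!\!\sum_{n\leq X}\!\Lambda(n)\Bigl(\frac{n}{X}\Bigr)^{it} - 2\pi\,X^{1/2}\!\!\sum_{n>X}\!\frac{\Lambda(n)}{n}\Bigl(\frac{X}{n}\Bigr)^{it} + R(t,X),
\]
with $R(t,X)$ an error term controlled by standard bounds for $\zeta'/\zeta$ on vertical lines.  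Since $\mathcal{F}(X,T) = \frac{1}{2\pi}\int_0^T |G(t,X)|^2 \, dt$ up to negligible boundary terms, squaring and integrating gives a diagonal part, handled by an appeal to $\psi(x)\sim x$, and off-diagonal terms weighted by a kernel in $X/T$ and $k$, which can be inserted into \eqref{HL}.  A Fourier-type calculation with the singular series $\mathfrak{S}(k)$ should then verify that the $k$-sum collapses to the requisite $\tfrac{T\log T}{2\pi}$ contribution in the conjectured range.

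For $1\leq X\leq T$ the diagonal dominates and one recovers Montgomery's original unconditional result.  The hard part is the range $T\leq X\leq T^A$, where the diagonal no longer dominates and the off-diagonal correlations must genuinely be brought under control.  The principal obstacle is that \eqref{HL} is not available with enough uniformity in $k$; indeed, the Hardy--Littlewood conjecture and Conjecture \ref{SPC} lie on essentially the same level of difficulty, and an alternative route through the Goldston--Montgomery equivalence with Conjecture \ref{GMcon} only trades one open problem for another.  Thus an unconditional proof in the range $T\leq X\leq T^A$ appears to require either a breakthrough on prime correlations or an entirely new method of the kind successful in the function-field analogue [\textbf{\ref{KR}}].
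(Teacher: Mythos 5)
This statement is Montgomery's Pair Correlation Conjecture, which the paper records as a conjecture and does not prove; indeed no proof exists, and the paper only ever uses it as a hypothesis in equivalence theorems. Your proposal correctly recognizes this: you do not claim a proof, and your concluding assessment --- that in the range $T\leq X\leq T^{A}$ the off-diagonal prime correlations are the genuine obstruction, that the Hardy--Littlewood conjecture \eqref{HL} sits at essentially the same depth, and that the Goldston--Montgomery equivalence with Conjecture \ref{GMcon} only exchanges one open problem for another --- is exactly the state of the art. The heuristic route you sketch (Montgomery's explicit formula expressing $\sum_{\gamma}X^{i\gamma}\bigl(1+(t-\gamma)^{2}\bigr)^{-1}$ in terms of $\Lambda(n)$, squaring, evaluating the diagonal by the prime number theorem, and feeding the off-diagonal terms into \eqref{HL} with the singular series $\mathfrak{S}(k)$) is the classical derivation of the conjectured asymptotic in the regime $X\geq T$, and is the same calculation that underlies the more refined Conjecture \ref{RC} of Bogomolny and Keating cited in the paper. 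Two small cautions if you write this up as a heuristic: keep track of the normalization constant, since $\int_{-\infty}^{\infty}\bigl(1+(t-\gamma)^{2}\bigr)^{-1}\bigl(1+(t-\gamma')^{2}\bigr)^{-1}dt=\tfrac{\pi}{2}\,w(\gamma-\gamma')$ fixes the relation between $\mathcal{F}(X,T)$ and the mean square of the zero-sum; and note that to get the full conjectured range one needs \eqref{HL} with an error term uniform in $k$ up to size comparable with the length of the interval, which is stronger than the bare asymptotic \eqref{HL}. Note also that the paper's own programme deliberately avoids this Hardy--Littlewood route, working instead from the zero-statistics side (the ratios conjecture, Sections \ref{Ratios} and \ref{Paircorr}), precisely because no analogue of \eqref{HL} is expected for general $F$ in the Selberg class; for the Riemann zeta-function the two viewpoints are complementary, and your outline describes the arithmetic one.
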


The equivalence between Conjecture \ref{GMcon} and Conjecture \ref{SPC} has been investigated further in [\textbf{\ref{C}}, \textbf{\ref{LPZ}}] to include the lower order terms.

We have two main goals in this paper.  The first is to show how the more precise formula \eqref{Zeta variance h LOT} follows from a more accurate expression for the pair correlation of the Riemann zeros proposed by Bogomolny and Keating [\textbf{\ref{BogK}}] (see also [\textbf{\ref{BeK}}]):
\begin{conjecture}\label{RC}
For $h$ a suitable even test function 
\begin{eqnarray*}
&&\!\!\!\!\!\!\!\!\!\!\!\!\sum_{0<\gamma,\gamma'\leq T}h(\gamma-\gamma')=\frac{h(0)}{2\pi}\int_{0}^{T}\log\frac{t}{2\pi}\,dt+\frac{1}{(2\pi)^2}\int_{0}^{T}\int_{-T}^{T}h(\eta)\bigg[\bigg(\log\frac{t}{2\pi}\bigg)^2\\
&&\qquad+2\bigg(\bigg(\frac{\zeta'}{\zeta}\bigg)'(1+i\eta)+\bigg(\frac{t}{2\pi}\bigg)^{-i\eta}A(i\eta)\zeta(1-i\eta)\zeta(1+i\eta)-B(i\eta)\bigg)\bigg]d\eta dt+O_\varepsilon(T^{1/2+\varepsilon}),
\end{eqnarray*}
where
\[
A(r)=\prod_p\frac{(1-\frac{1}{p^{1+r}})(1-\frac{2}{p}+\frac{1}{p^{1+r}})}{(1-\frac{1}{p})^2}
\]
and
\[
B(r)=\sum_p\bigg(\frac{\log p}{p^{1+r}-1}\bigg)^2.
\]
Here the integral is to be regarded as a principal value near $\eta=0$.
\end{conjecture}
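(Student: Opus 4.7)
The plan is to derive this formula heuristically along the lines of Bogomolny and Keating, by combining the explicit formula for sums over the non-trivial zeros of $\zeta(s)$ with the strong Hardy--Littlewood conjecture \eqref{HL}. Concretely, I would use the Guinand--Weil explicit formula to express a suitably smoothed version of $\sum_{\gamma\leq T} e^{i\gamma\tau}$ as a sum over prime powers plus an archimedean contribution that is ultimately responsible for the $\log(t/2\pi)$ terms. Via Fourier inversion the pair-correlation sum takes the schematic form $\sum_{\gamma,\gamma'}h(\gamma-\gamma') \sim \int \hat h(\tau)\bigl|\sum_{\gamma\leq T}e^{i\gamma\tau}\bigr|^2 d\tau$, and expanding the modulus-squared through the explicit formula reduces everything to analysing a double sum over prime powers.

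This double sum splits naturally into a diagonal and an off-diagonal part. The diagonal piece, where the two primes agree, is essentially $\sum_p(\log p)^2 p^{-1-i\eta}$ together with corrections coming from genuine prime powers $p^k$ with $k\geq 2$; after some manipulation these combine into $(\zeta'/\zeta)'(1+i\eta) - B(i\eta)$, with $B(i\eta)$ encoding precisely the higher-prime-power correction through its defining Dirichlet series. The off-diagonal piece is schematically
\[
\Bigl(\tfrac{t}{2\pi}\Bigr)^{-i\eta}\sum_{k\neq 0}\sum_{n}\Lambda(n)\Lambda(n+k)\,n^{-1-i\eta},
\]
and substituting \eqref{HL} with the explicit singular series $\mathfrak{S}(k)$ reduces it to a weighted Dirichlet series in $k$. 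A Mellin/Euler-product analysis of $\mathfrak{S}(k)$ then produces exactly the factor $A(i\eta)\zeta(1-i\eta)\zeta(1+i\eta)$: the convergent Euler product $A(r)$ captures the regular local contributions, whilst the factor $\zeta(1+i\eta)\zeta(1-i\eta)$ provides the divergence at each local factor where the unregularised product would otherwise blow up.

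One then assembles the main term by integrating against $h(\eta)$ over $\eta\in[-T,T]$ and averaging in $t$. The principal-value prescription at $\eta=0$ is forced by the simple poles of $\zeta(1\pm i\eta)$, which must cancel against corresponding singularities in $(\zeta'/\zeta)'(1+i\eta)$ and combine consistently with the bulk $\log^2(t/2\pi)$ contribution in order for the formula to make sense as written.

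The main obstacle is the claimed error term $O_\varepsilon(T^{1/2+\varepsilon})$: rigorously securing it would require \eqref{HL} with a strong power-saving error, uniform in the shift $k$ up to $k\asymp T$, which is entirely out of reach of current technology. Absent such uniformity the derivation remains heuristic, and one can at best deduce conditional versions of the formula with error terms whose quality degrades in line with the working hypotheses one is prepared to impose on \eqref{HL}.
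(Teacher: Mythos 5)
Your derivation is essentially the original Bogomolny--Keating argument: explicit formula, diagonal versus off-diagonal prime sums, and the strong Hardy--Littlewood conjecture \eqref{HL} with the singular series $\mathfrak{S}(k)$ generating the factor $A(i\eta)\zeta(1-i\eta)\zeta(1+i\eta)$. That is a legitimate (heuristic) route to the stated formula, and indeed it is how the formula was first obtained in [\textbf{\ref{BogK}}]. The paper, however, relies on a genuinely different derivation, due to Conrey and Snaith [\textbf{\ref{CS}}] and reproduced in generalised form in Sections \ref{Ratios} and \ref{Paircorr}: one writes $\sum h(\gamma-\gamma')$ as a double contour integral of $\frac{\zeta'}{\zeta}(u)\frac{\zeta'}{\zeta}(v)h(-i(u-v))$ around a rectangle, splits it into pieces $I_1,I_2,I_3$ according to which side of the critical line each contour lies on, uses the functional equation to produce the $\log^2\frac{t}{2\pi}$ and $h(0)$ terms from $I_2$, and evaluates the cross term $I_3$ by inserting the ratios conjecture for $\int \frac{\zeta'}{\zeta}(s+\alpha)\frac{\zeta'}{\zeta}(1-s+\beta)\,dt$; the quantities $A$ and $B$ then arise as the arithmetic factor $A_F$ and the derivative $\partial^2 A_F/\partial\alpha\partial\beta$ of the ratios conjecture rather than from $\mathfrak{S}(k)$. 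The distinction is not cosmetic for this paper: its central point is that for general $F\in\mathcal{S}$ of degree $\geq 2$ no Hardy--Littlewood analogue with a non-trivial main term is expected, so your route is unavailable beyond the zeta function (and a few other degree-one cases), whereas the ratios-conjecture route extends uniformly across the Selberg class. Your closing caveat is apt but applies to both derivations: the statement is a conjecture precisely because neither the uniform strong form of \eqref{HL} nor the ratios conjecture with the $O_\varepsilon(T^{1/2+\varepsilon})$ error is within reach, so in either case one only trades one unproved hypothesis for another.
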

This formula was originally obtained in [\textbf{\ref{BogK}}] from the Hardy-Littlewood Conjecture \eqref{HL}.  Importantly for us here, it was shown by Conrey and Snaith [\textbf{\ref{CS}}] to follow from the ratios conjecture for the Riemann zeta-function [\textbf{\ref{CFZ}}], and in the above formulation we use their notation.  It follows from our general results, set out below, that \eqref{Zeta variance h LOT} may be obtained from an analysis based on Conjecture \ref{RC}.   

The second goal of this paper, and in fact our principal goal, is to extend the approach based on formulae like that in Conjecture \ref{RC} to a wider class of sums in which the von Mangoldt function is multiplied by arithmetic functions associated with other $L$-functions in the Selberg class [\textbf{\ref{S}}].  This essentially corresponds to studying the variances of these functions when summed over prime arguments in short intervals.  

Let $\mathcal{S}$ denote the Selberg class $L$-functions. For $F\in\mathcal{S}$ primitive, 
\[
F(s)=\sum_{n=1}^{\infty}\frac{a_F(n)}{n^s},
\]
let $m_F\geq 0$ be the order of the pole at $s=1$,
\[
\frac{F'}{F}(s)=-\sum_{n=1}^{\infty}\frac{\Lambda_F(n)}{n^s}\qquad\textrm{and}\qquad F(s)^{-1}=\sum_{n=1}^{\infty}\frac{\mu_F(n)}{n^s}\qquad(\textrm{Re}(s)>1).
\]
The function $F(s)$ has an Euler product
\begin{equation}\label{Euler}
F(s)=\prod_{p}\textrm{exp}\bigg(\sum_{l=1}^{\infty}\frac{b_F(p^l)}{p^{ls}}\bigg)
\end{equation}
and satisfies a functional equation
\[
\Phi(s)=\varepsilon_F\overline{\Phi}(1-s),
\]
where
\[
\Phi(s)=Q^s\bigg(\prod_{j=1}^{r}\Gamma(\lambda_j s+\mu_j)\bigg) F(s),
\]
with some $Q>0$, $\lambda_j>0$, $\textrm{Re}(\mu_j)\geq0$ and $|\varepsilon_F|=1$. Here $\overline{\Phi}(s)=\overline{\Phi(\overline{s})}$. We will also write the functional equation in the form
\[ 
F(s)=X(s)\overline{F}(1-s),
\]
where
\[
X(s)=\varepsilon_F Q^{1-2s}\prod_{j=1}^{r}\frac{\Gamma\big(\lambda_j(1-s)+\overline{\mu_j}\big)}{\Gamma(\lambda_j s+\mu_j)}.
\]
The two important invariants of $F(s)$ are the degree $d_F$ and the conductor $\mathfrak{q}_F$,
\[
d_F=2\sum_{j=1}^{r}\lambda_j\qquad\textrm{and}\qquad \mathfrak{q}_F=(2\pi)^{d_F}Q^2\prod_{j=1}^{r}\lambda_j^{2\lambda_j}.
\]

For $F\in\mathcal{S}$, it is expected that a generalised prime number theorem of the form
\begin{equation*}
\psi_{F}(x):= \sum_{n \leq x} \Lambda_F(n)= m_{F} x+o(x)
\end{equation*}
holds. In analogy with \eqref{Zeta variance delta} and \eqref{Zeta variance h} we shall consider
\begin{equation*}
V_F(X, \delta) :=  \int_{1}^{X}\Big |\psi_F(x+\delta x)-\psi_F(x) - m_F\delta x\Big|^{2} dx
\end{equation*}
and
\begin{equation*}
\tilde{V}_F(X, h) :=  \int_{1}^{X}\Big |\psi_F(x+h)-\psi_F(x) - m_Fh\Big|^{2} dx.
\end{equation*}
So, for example, when $F$ represents an $L$-function associated with an elliptic curve, $V_F(X, \delta)$ and $\tilde{V}_F(X, h)$ represent the variances of sums over short intervals involving the Fourier coefficients of the associated modular form evaluated at primes and prime powers; and in the case of Ramanujan's $L$-function, they represent the corresponding variances for sums involving the Ramanujan $\tau$-function.

It is important to note that for most $F\in\mathcal{S}$ one does not expect an analogue of the Hardy-Littlewood Conjecture \eqref{HL}; that is, for most $F\in\mathcal{S}$ it is expected that
\begin{equation*}
\sum_{n\le X}\Lambda_F(n)\Lambda_F(n+h)=o(X).
\end{equation*}
This might lead one to anticipate that $V_F(X, \delta)$ and $\tilde{V}_F(X, h)$ typically exhibit different asymptotic behaviour  than in the case when $F$ is the Riemann zeta-function, because \eqref{HL} plays a central role in our understanding
of the variances in that case.  Somewhat surprisingly from this perspective, our results suggest that $V_F(X, \delta)$ and $\tilde{V}_F(X, h)$ have the same general form for all $F\in\mathcal{S}$.  The reason is that they all look essentially the same from the perspective of the statistical distribution of their zeros.  It would be interesting to understand this from the Hardy-Littlewood point of view.  Presumably it is related to a conspiracy amongst the terms that are $o(X)$, unlike in the case of the Riemann zeta-function where they come from the main term.  Drawing attention to this is one of our principal motivations.

The pair correlation of zeros of $F(s)$ is defined in analogy with the expression in Conjecture \ref{SPC} as
\[
\mathcal{F}_F(X,T)=\sum_{-T\leq\gamma_F,\gamma'_F\leq T}X^{i(\gamma_F-\gamma'_F)}w(\gamma_F-\gamma'_F),
\]
where, assuming the Generalized Riemann Hypothesis (GRH), the non-trivial zeros of $F(s)$ are denoted $\frac12 +i \gamma_F$.  Murty and Perelli [\textbf{\ref{MP}}] conjectured that
\begin{equation*}
\mathcal{F}_F(X,T)\sim  \frac{T\log X}{\pi}
\end{equation*}
uniformly for $T^{A_1}\leq X\leq T^{A_2}$ for any fixed $0<A_1\leq A_2\leq d_F$, and
\begin{equation*}
\mathcal{F}_F(X,T)\sim  \frac{d_FT\log T}{\pi}
\end{equation*}
uniformly for $T^{A_1}\leq X\leq T^{A_2}$ for any fixed $d_F\leq A_1\leq A_2<\infty$. 

Our approach to studying the variances $V_F(X, \delta)$ and $\tilde{V}_F(X, h)$ is based on the pair correlation of zeros.  Specifically, our main results are as stated below.  We set out these results in pairs, because, unlike the case of the Riemann zeta-function and other degree-1 $L$-functions, when $d_F\ge 2$ there are two cases to consider: either $T\leq X\leq T^{d_F}$ or $T^{d_F}\leq X$.  In both of these cases, our results then correspond to examining the implication of the pair correlation of zeros for $V_F(X, \delta)$ (Theorems labelled A), the implications in the reverse direction (B), implications of $V_F(X, \delta)$ for $\tilde{V}_F(X, h)$ (C), and in the reverse direction (D).  

\begin{theoremA}\label{maintheo}
Assume GRH. If $d_F<A_1< A_2<\infty$ and 
\begin{equation}\label{250}
\mathcal{F}_F(X,T)=\frac{T}{\pi}\Big(d_F\log \frac{T}{2\pi}+\log\mathfrak{q}_F-d_F\Big)+O\big(T^{1-c}\big)
\end{equation}
uniformly for $T^{A_1}\ll X\ll T^{A_2}$ for some $c>0$, then for any fixed $1/A_2<B_1\leq B_2<1/A_1$ we have
\begin{eqnarray*}
V_F(X, \delta)&=&\frac{1}{2}\delta X^2\Big(d_F \log\frac{1}{\delta}+\log\mathfrak{q}_F+(1-\gamma_0-\log2\pi)d_F\Big)+O\big(\delta^{1+c/2} X^2\big)\\
&&\qquad\qquad+O_\varepsilon\Big(\delta^{1-\varepsilon} X^2(\delta X^{1/A_2})^{1/2}\Big)+O_\varepsilon\Big(\delta^{1-\varepsilon} X^2(\delta X^{1/A_1})^{-2A_1/(4A_1+1)}\Big)\nonumber
\end{eqnarray*}
uniformly for $X^{-B_2}\ll\delta\ll X^{-B_1}$.
\end{theoremA}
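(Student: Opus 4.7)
The plan is to follow the Goldston--Montgomery strategy [\textbf{\ref{GM}}], adapted to $L$-functions in the Selberg class. Under GRH the truncated explicit formula for $F$ gives, for $2\leq T\leq x$,
\[
\psi_F(x)=m_Fx-\sum_{|\gamma_F|\leq T}\frac{x^{1/2+i\gamma_F}}{\tfrac12+i\gamma_F}+O\!\left(\frac{x(\log\mathfrak{q}_F xT)^2}{T}\right),
\]
up to contributions from the gamma factors and trivial zeros. Differencing yields
\[
G(x):=\psi_F(x+\delta x)-\psi_F(x)-m_F\delta x=-\sum_{|\gamma_F|\leq T}\frac{x^{1/2+i\gamma_F}R_{\gamma_F}}{\tfrac12+i\gamma_F}+E(x,\delta,T),
\]
where $R_\gamma:=(1+\delta)^{1/2+i\gamma}-1$ and $E$ is controlled by standard tail estimates. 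A natural choice will be $T\asymp\delta^{-1}$; the interval condition $1/A_2<B_1\leq B_2<1/A_1$ is exactly what aligns $\delta\in[X^{-B_2},X^{-B_1}]$ with the hypothesis window $T^{A_1}\ll X\ll T^{A_2}$.

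Squaring and integrating $\int_1^X|G(x)|^2\,dx$, the main algebraic identity is
\[
V_F(X,\delta)=\sum_{|\gamma_F|,|\gamma'_F|\leq T}\frac{R_{\gamma_F}\overline{R_{\gamma'_F}}}{(\tfrac12+i\gamma_F)(\tfrac12-i\gamma'_F)}\cdot\frac{X^{2+i(\gamma_F-\gamma'_F)}-1}{2+i(\gamma_F-\gamma'_F)}+\text{cross errors}.
\]
The diagonal $\gamma_F=\gamma'_F$ already produces the full main term, since $|R_\gamma|^2/|\tfrac12+i\gamma|^2\sim 4\sin^2(\gamma\log(1+\delta)/2)/(\gamma^2+\tfrac14)$; Riemann--Stieltjes integration against the Selberg-class counting function $N_F(T)\sim\tfrac{1}{2\pi}(d_FT\log T+T\log(\mathfrak{q}_F/(2\pi e)^{d_F}))$ delivers $\tfrac12\delta X^2(d_F\log(1/\delta)+\log\mathfrak{q}_F+(1-\gamma_0-\log 2\pi)d_F)$. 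The off-diagonal sum is then rewritten via Parseval-type identities---using $(\tfrac12\pm i\gamma)^{-1}=\int_0^\infty e^{-(\tfrac12\pm i\gamma)s}\,ds$ and $\int_0^\infty e^{-2t}\cos(ut)\,dt=2/(4+u^2)$---to decouple the $\gamma,\gamma'$-dependence into the shape $X^{i(\gamma-\gamma')}w(\gamma-\gamma')$ that defines $\mathcal{F}_F$. This turns the off-diagonal into a one-dimensional integral of $\mathcal{F}_F(Y,T)$ in $Y$ against a kernel $K_\delta$ concentrated near $Y\sim X$; inserting the hypothesised asymptotic (\ref{250}) and integrating by parts produces the cancellation needed to match the diagonal main term, while the assumed $O(T^{1-c})$ error contributes the claimed $O(\delta^{1+c/2}X^2)$.

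The main obstacle is that (\ref{250}) is only assumed on the window $T^{A_1}\ll Y\ll T^{A_2}$, whereas the Parseval representation integrates $Y$ over the whole positive axis. The tail contributions $Y\ll T^{A_1}$ and $Y\gg T^{A_2}$ must be handled using only the Montgomery-type upper bound $\mathcal{F}_F(Y,T)\ll T\log^2 T$ together with the concentration of $K_\delta$. Balancing the in-window error $O(T^{1-c})$ against the two tail losses, plus the optimisation of a secondary smoothing parameter needed to pass from the $w$-weighted pair-correlation to a point-evaluation of $\mathcal{F}_F$, is what produces the residual error terms $O_\varepsilon(\delta^{1-\varepsilon}X^2(\delta X^{1/A_2})^{1/2})$ (coming from the small-$Y$ tail, where $\delta X^{1/A_2}<1$) and $O_\varepsilon(\delta^{1-\varepsilon}X^2(\delta X^{1/A_1})^{-2A_1/(4A_1+1)})$ (coming from the large-$Y$ tail and the smoothing, where $\delta X^{1/A_1}>1$). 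Keeping track of these exponents and checking compatibility with the range $X^{-B_2}\ll\delta\ll X^{-B_1}$ is the most delicate bookkeeping in the argument; once this is done the claimed bound follows.
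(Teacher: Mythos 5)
Your strategy (explicit formula, square out, split diagonal from off-diagonal, convert the off-diagonal to $\mathcal{F}_F$) is in the Goldston--Montgomery spirit, and your observation that in this regime the main term is essentially diagonal is consistent with the paper: hypothesis \eqref{250} asserts precisely that $\mathcal{F}_F(X,T)$ equals its diagonal $2N_F(T)$ up to $O(T^{1-c})$. But one concrete step fails: the truncation of the explicit formula at $T\asymp\delta^{-1}$. First, the truncation error $O\big(x(\log xT)^2/T\big)$ then has mean square $\asymp\delta^2X^3(\log X)^4$ over $[1,X]$, which exceeds the main term $\asymp\delta X^2\log(1/\delta)$ by a factor $\asymp\delta X(\log X)^3\gg X^{1-B_2}$, unbounded throughout the stated range since $B_2<1/A_1<1/d_F\leq1$. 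Second, and worse, the zeros you discard contribute a positive proportion of the main term: in the diagonal sum $\sum_{\gamma_F}|R_{\gamma_F}|^2/|\rho_F|^2\approx\sum_{\gamma_F}4\sin^2(\kappa\gamma_F)/\gamma_F^2$ with $\kappa\asymp\delta$, the ranges $|\gamma_F|<\delta^{-1}$ and $|\gamma_F|>\delta^{-1}$ each contribute $\asymp\delta\log(1/\delta)$, so cutting at height $\delta^{-1}$ loses a constant fraction of the answer. The paper instead truncates at $Z=X^2$ (see \eqref{explicit}) and never evaluates $\mathcal{F}_F$ at a single height.

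The second gap is structural. The hypothesis is an asymptotic for $\int_{-T}^{T}f(t)\,dt$ with $f(t)=\big|\sum_{\gamma_F}X^{i\gamma_F}(1+(t-\gamma_F)^2)^{-1}\big|^2$, valid for a whole range of $T$, and the paper converts it into the variance through a chain of Tauberian steps: Lemma \ref{lot} turns it into an asymptotic for $\int(\sin\kappa u/u)^2f(u)\,du$ with $\kappa=\tfrac12\log(1+\delta)$ (this is where the two residual error terms originate, from the ranges $|u|\leq\delta^{-(1-c_1)}$ and $|u|\geq\delta^{-(1+c_2)}$ where the hypothesis is unavailable); Plancherel and Lemma \ref{taub} then convert that into $\int_X^{2X}\big|\sum a(\rho_F)x^{\rho_F}\big|^2dx$; and the full range $[1,X]$ is assembled by summing dyadically down to $2^{-K}X$ with $2^K=\delta^{(1+c_2)A_1}X$ and invoking the Saffari--Vaughan bound (Lemma \ref{SVbound}) below that. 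Your proposal compresses all of this into ``Parseval-type identities'' and unspecified ``bookkeeping'', but the exponents $1/2$ and $-2A_1/(4A_1+1)$ in the statement arise exactly from optimising $c_1$, $c_2$ and $K$ in that chain; without carrying it out the claimed error terms cannot be recovered. So the proposal identifies the right circle of ideas but, as written, does not constitute a proof.
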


\begin{theoremA}
Assume GRH. If $1<A_1< A_2<d_F$ and 
\begin{equation*}
\mathcal{F}_F(X,T)=\frac{T\log X}{\pi}+O\big(T^{1-c}\big)
\end{equation*}
uniformly for $T^{A_1}\ll X\ll T^{A_2}$ for some $c>0$, then for any fixed $1/A_2<B_1\leq B_2<1/A_1$ we have
\begin{eqnarray*}
V_F(X, \delta)&=&\frac{1}{6}\delta X^2\big(3\log X-4\log 2\big)+O\big(\delta^{1+c/2} X^2\big)\\
&&\qquad\qquad+O_\varepsilon\Big(\delta^{1-\varepsilon} X^2(\delta X^{1/A_2})^{1/2}\Big)+O_\varepsilon\Big(\delta^{1-\varepsilon} X^2(\delta X^{1/A_1})^{-2A_1/(4A_1+1)}\Big)\nonumber
\end{eqnarray*}
uniformly for $X^{-B_2}\ll\delta\ll X^{-B_1}$.
\end{theoremA}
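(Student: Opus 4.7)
The plan is to mirror the proof of the first Theorem A, feeding in the intermediate-range pair correlation hypothesis $\mathcal{F}_F(X,T)=(T\log X)/\pi+O(T^{1-c})$ in place of \eqref{250}. Under GRH, the explicit formula for $\psi_F$ yields
\[
\psi_F((1+\delta)x)-\psi_F(x)-m_F\delta x = -x^{1/2}\sum_{\gamma_F}K_\delta(\gamma_F)\,x^{i\gamma_F} + O(\log x\log\mathfrak{q}_F),
\]
with $K_\delta(\gamma):=((1+\delta)^{1/2+i\gamma}-1)/(1/2+i\gamma)$. Rather than treat the sharp cutoff in $V_F$ directly, I would first analyse the Mellin-smoothed quantity
\[
I_F(X,\delta):=\int_0^\infty \frac{|\psi_F((1+\delta)x)-\psi_F(x)-m_F\delta x|^2}{x^2}\,e^{-x/X}\,dx,
\]
expand the square, use $\int_0^\infty x^{i(\gamma_F-\gamma_F')-1}e^{-x/X}\,dx=X^{i(\gamma_F-\gamma_F')}\Gamma(i(\gamma_F-\gamma_F'))$ on the off-diagonal terms, and recast the resulting double sum over zero pairs as a Stieltjes integral against $d\mathcal{F}_F(X,T)$.

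The kernel $|K_\delta(T)|^2\asymp 4\sin^2(T\log(1+\delta)/2)/T^2$ is concentrated at $T\sim 1/\delta$, and the constraints $1/A_2<B_1\leq B_2<1/A_1$ on $\delta$ ensure the dominant $T$-mass sits inside $T^{A_1}\ll X\ll T^{A_2}$, where the hypothesis applies. Substituting it produces a main contribution essentially $(\log X/\pi)\int_0^\infty |K_\delta(T)|^2\,dT$ plus boundary corrections from the integration by parts, yielding the main term $\tfrac{1}{6}\delta X^2(3\log X-4\log 2)$; the $O(T^{1-c})$ error in the hypothesis integrates to $O(\delta^{1+c/2}X^2)$, while the contributions from $T$ outside the admissible range, estimated using only the trivial bound $\mathcal{F}_F(X,T)\ll T\log T$, generate the two $\varepsilon$-dependent error terms. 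A Gallagher-type Tauberian lemma then converts $I_F(X,\delta)$ back to $V_F(X,\delta)$.

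I expect the main obstacle to be the Tauberian conversion in the final step: $I_F$ and $V_F$ differ by a positive kernel, so one can only bound their discrepancy, and the $\varepsilon$-losses originate here; controlling this uniformly throughout $X^{-B_2}\ll\delta\ll X^{-B_1}$ is what fixes the shape of both error terms $O_\varepsilon(\delta^{1-\varepsilon}X^2(\delta X^{1/A_2})^{1/2})$ and $O_\varepsilon(\delta^{1-\varepsilon}X^2(\delta X^{1/A_1})^{-2A_1/(4A_1+1)})$. A secondary technical point is the precise computation of the constants in the main term: the coefficient $\tfrac{1}{2}$ of $\delta X^2\log X$ follows from $\int|K_\delta(T)|^2\,dT\sim\pi\delta/2$, but the constant $-\tfrac{2}{3}\log 2$ must be extracted from secondary boundary corrections in the Stieltjes integration by parts. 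Note that, since the antiderivative of $\mathcal{F}_F$ here contains no $T$-dependent logarithm (unlike in the first Theorem A, where $\log(T/2\pi)$ interacted with the kernel), the computation produces a different numerical structure than before — the $\log X$ factor now passes outside the $T$-integral, and only the residual interaction of the kernel with the boundary terms accounts for the $-4\log 2$ correction.
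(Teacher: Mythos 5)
Your high-level plan is in the Goldston--Montgomery spirit, but the central step --- ``recast the resulting double sum over zero pairs as a Stieltjes integral against $d\mathcal{F}_F(X,T)$'' --- does not go through with the smoothing you chose. After expanding the square in $I_F(X,\delta)$ and integrating in $x$, the pair $(\gamma_F,\gamma_F')$ carries the weight $K_\delta(\gamma_F)\overline{K_\delta(\gamma_F')}\,X^{i(\gamma_F-\gamma_F')}\Gamma\big(i(\gamma_F-\gamma_F')\big)$. To write such a sum as $\int \Phi(T)\,d_T\mathcal{F}_F(X,T)$ you would need the weight to factor as a function of $\max\{|\gamma_F|,|\gamma_F'|\}$ times $w(\gamma_F-\gamma_F')=4/(4+(\gamma_F-\gamma_F')^2)$; instead $K_\delta(\gamma_F)\overline{K_\delta(\gamma_F')}$ oscillates in each ordinate separately, and $\Gamma(iu)$ decays exponentially in $|u|$ and has a pole at $u=0$ (indeed $I_F$ as you define it diverges at $x\to 0$, and the diagonal terms are not finite as written). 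This kernel mismatch is exactly why the paper goes the other way: it forms $f_X(t)=\big|\sum_{\gamma_F}X^{i\gamma_F}/(1+(t-\gamma_F)^2)\big|^2$, whose sharp-cutoff integral reproduces $\tfrac{\pi}{2}\mathcal{F}_F(X,T)$ because the Cauchy kernel's autocorrelation is exactly $\tfrac{\pi}{2}w$; Lemma 2.1 then converts the hypothesis into Fej\'er-weighted averages $\int(\sin\kappa u/u)^2 f_X$, Plancherel converts these into $\int|\sum a(\rho_F)e^{i\gamma_F(Y+y)}|^2e^{-2|y|}dy$, Lemma 2.3 unsmooths to $\int_X^{2X}$, and the explicit formula, a dyadic decomposition, and Lemma 2.4 (Saffari--Vaughan) finish. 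Without some substitute for this chain your proposal has no mechanism for actually feeding in the hypothesis on $\mathcal{F}_F$.

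The second gap is the constant $-4\log 2$, which you defer to ``secondary boundary corrections in the Stieltjes integration by parts.'' In the paper's argument this constant is produced by the dyadic decomposition: one first obtains $\int_{X'}^{2X'}\sim\tfrac32\delta X'^2\log X'$ and then sums over $X'=2^{-k}X$, the $\log 2$ entering through $\sum_k k4^{-k}=\tfrac49$, with the initial segment $[1,2^{-K}X]$ controlled by Saffari--Vaughan. A globally smoothed average like your $I_F$ carries no dyadic structure, so whatever constant it yields is tied to the kernel $x^{-2}e^{-x/X}$ and will not be $-4\log 2$; moreover, unlike in Theorem A1 where the local variance density $\asymp\delta x\,(d_F\log\tfrac1\delta+\cdots)$ is the same on every dyadic block, here the density varies like $\delta x\log x$, so the unsmoothing/Tauberian step must track a slowly varying main term --- this is precisely where the constant is decided, and your sketch is silent there. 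Finally, the specific error exponents are not purely Tauberian in origin: $(\delta X^{1/A_2})^{1/2}$ and $(\delta X^{1/A_1})^{-2A_1/(4A_1+1)}$ come from the trivially bounded ranges $|u|\le U_1$, $|u|\ge U_2$ in Lemma 2.1 (the square-root loss being Lemma 2.3's), balanced against the $4^{-K}$ contribution of the initial dyadic segment; you would need to reproduce that optimization explicitly.
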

 
\begin{theoremB}\label{theoremB1}
Assume GRH. If $0< B_1< B_2<1/d_F$ and
\begin{eqnarray}\label{B1condition}
V_F(X, \delta)=\frac{1}{2}\delta X^2\Big(d_F \log\frac{1}{\delta}+\log\mathfrak{q}_F+(1-\gamma_0-\log2\pi)d_F\Big)+O\big(\delta^{1+c} X^2\big)
\end{eqnarray}
uniformly for $X^{-B_2}\ll\delta\ll X^{-B_1}$ for some $c>0$, then for any fixed $1/B_2<A_1\leq A_2<1/B_1$ we have
\begin{eqnarray*}
\mathcal{F}_F(X,T)&=&\frac{T}{\pi}\Big(d_F\log \frac{T}{2\pi}+\log\mathfrak{q}_F-d_F\Big)+O_\varepsilon\Big(T ^{3/(3+c)+\varepsilon}\Big)\\
&&\qquad\qquad+O_\varepsilon\Big(T^{1+\varepsilon} \big(T/X^{B_2}\big)^{2}\Big)+O_\varepsilon\Big(T^{1+\varepsilon} \big(T/X^{B_1}\big)^{-1/4}\Big)
\end{eqnarray*}
uniformly for $T^{A_1}\ll X\ll T^{A_2}$. 
\end{theoremB}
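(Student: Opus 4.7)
The plan is to mirror the Goldston--Montgomery strategy for the reverse implication (variance $\Rightarrow$ pair correlation) and adapt it to the Selberg class. Under GRH the explicit formula for $F(s)$ reads
\[
\psi_F(x+\delta x)-\psi_F(x)-m_F\delta x = -\sum_{\gamma_F}\frac{(x+\delta x)^{1/2+i\gamma_F}-x^{1/2+i\gamma_F}}{1/2+i\gamma_F}+O(\log^2 x),
\]
valid up to a controlled truncation at height $\ll x$. Squaring the modulus, integrating $x$ from $1$ to $X$, and performing the change of variable $x=e^u$, the double sum over $(\gamma_F,\gamma_F')$ factorises: the $u$-integral produces a kernel depending only on $\gamma_F-\gamma_F'$ and $X$, while the $\delta$-dependence is absorbed into a factor $|(1+\delta)^{1/2+i\gamma_F}-1|^2$. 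Rearranging and using Plancherel on the zero-sum, one arrives at an identity of the shape
\[
\frac{V_F(X,\delta)}{X^2}=\int_{-\infty}^{\infty}\mathcal{F}_F(X,t)\,K_\delta(t)\,dt+\text{(lower order)},
\]
where $K_\delta(t)$ concentrates on the scale $|t|\asymp 1/\delta$ and satisfies $\int K_\delta\asymp \delta$. This step is routine once the explicit formula and Plancherel are in place.

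The heart of the proof is then a Tauberian inversion: the assumed asymptotic \eqref{B1condition} for $V_F(X,\delta)$ in the window $X^{-B_2}\ll\delta\ll X^{-B_1}$ must be converted into the claimed asymptotic for $\mathcal{F}_F(X,T)$ in the dual window $T^{A_1}\ll X\ll T^{A_2}$ with $A_j=1/B_{3-j}$. I would apply Goldston--Montgomery's Laplace-transform lemma to the non-negative function $\mathcal{F}_F(X,t)$, using \eqref{B1condition} as the hypothesis on its weighted integral $\int \mathcal{F}_F(X,t)K_\delta(t)\,dt$. The lemma converts a power-saving error $O(\delta^{1+c}X^2)$ on the variance side into a pointwise error $O(T^{3/(3+c)+\varepsilon})$ on the pair correlation side, which is precisely the first error term in the conclusion. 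The main term on the pair correlation side, namely $\frac{T}{\pi}\bigl(d_F\log\frac{T}{2\pi}+\log\mathfrak{q}_F-d_F\bigr)$, emerges from the explicit main term on the variance side after the substitution $T\leftrightarrow 1/\delta$ and after carefully matching the normalising constants involving $\gamma_0+\log 2\pi$.

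The other two error terms, $T^{1+\varepsilon}(T/X^{B_2})^2$ and $T^{1+\varepsilon}(T/X^{B_1})^{-1/4}$, arise as boundary/truncation losses: the Tauberian lemma requires \eqref{B1condition} for all $\delta$ in $[X^{-B_2},X^{-B_1}]$, and the contribution to $\mathcal{F}_F(X,T)$ coming from values of $\delta$ outside this window—or equivalently from scales where $K_\delta$ fails to localise to $T$—must be estimated directly using the trivial bound $\mathcal{F}_F(X,t)\ll t\log^2 t$ together with partial summation. The exponent $2$ reflects the decay of $K_\delta$ past its peak, while the exponent $-1/4$ is the signature of balancing a smoothing parameter against the error on the low-$\delta$ side.

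The principal obstacle is controlling these edge effects uniformly in the full admissible range $1/B_2<A_1\leq A_2<1/B_1$: one must choose the smoothing parameter in the Tauberian lemma so as to simultaneously minimise the three error terms, and verify that the kernel $K_\delta$ is sufficiently regular at the endpoints of the $\delta$-range for the inversion to be pointwise in $T$ rather than merely averaged. A secondary technical point is to ensure that the contribution of the prime power terms in the explicit formula, as well as the archimedean factors depending on $d_F$ and $\mathfrak{q}_F$, combine to produce exactly the constant $\log\mathfrak{q}_F-d_F$ in the main term; this is where the Selberg class data (degree, conductor, and the gamma factors in $X(s)$) enter, and requires a careful bookkeeping of lower-order contributions but no new ideas beyond the degree-one treatment in [\textbf{\ref{GM}}].
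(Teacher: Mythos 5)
Your proposal follows essentially the same route as the paper: the explicit formula plus Plancherel converts the (suitably weighted) variance into a $(\sin\kappa t/t)^2$-weighted integral of the non-negative density $\bigl|\sum_{\gamma_F} X^{i\gamma_F}\bigl(1+(t-\gamma_F)^2\bigr)^{-1}\bigr|^2$, whose integral over $[-T,T]$ is $\tfrac{\pi}{2}\mathcal{F}_F(X,T)$, and the converse Goldston--Montgomery Tauberian lemma (Lemma \ref{converselot}) then produces the main term together with the three error exponents $3/(3+c)$, $2$ and $-1/4$ exactly as you predict. The only substantive steps you gloss over are the partial summation in $X$ and the a priori Saffari--Vaughan bound of Lemma \ref{SVbound}, which are needed to pass from the sharp-cutoff hypothesis \eqref{B1condition} to the exponentially weighted integral $\int_0^\infty\min\{x^2/X^2,X^2/x^2\}\,|f|^2x^{-2}\,dx$ that Plancherel actually requires.
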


\begin{theoremB}
Assume GRH. If $1/d_F< B_1< B_2<1$ and
\begin{eqnarray*}
V_F(X, \delta)=\frac{1}{6}\delta X^2\big(3\log X-4\log 2\big)+O\big(\delta^{1+c} X^2\big)\nonumber
\end{eqnarray*}
uniformly for $X^{-B_2}\ll\delta\ll X^{-B_1}$ for some $c>0$, then for any fixed $1/B_2<A_1\leq A_2<1/B_1$ we have
\begin{eqnarray*}
\mathcal{F}_F(X,T)=\frac{T\log X}{\pi}+O_\varepsilon\Big(T ^{3/(3+c)+\varepsilon}\Big)+O_\varepsilon\Big(T^{1+\varepsilon} \big(T/X^{B_2}\big)^{2}\Big)+O_\varepsilon\Big(T^{1+\varepsilon} \big(T/X^{B_1}\big)^{-1/4}\Big)
\end{eqnarray*}
uniformly for $T^{A_1}\ll X\ll T^{A_2}$. 
\end{theoremB}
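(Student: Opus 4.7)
The plan is to reverse the Plancherel computation underlying the second Theorem A, in the style of Goldston and Montgomery [\textbf{\ref{GM}}], and then complete the argument by a quantitative Tauberian step. Under GRH the explicit formula gives
\[
\psi_F(x+\delta x)-\psi_F(x)-m_F\delta x=-\sum_{\gamma_F}\frac{x^{1/2+i\gamma_F}\bigl((1+\delta)^{1/2+i\gamma_F}-1\bigr)}{\tfrac12+i\gamma_F}+O(\log^2 x),
\]
so after the substitution $x=e^u$ the integrand of $V_F(X,\delta)$ becomes the squared modulus of an almost-periodic sum in $u$. Parseval then converts $V_F(X,\delta)$ into an integral of $\mathcal{F}_F(X,t)$ against a Fej\'er-type kernel $K_\delta(t)$ of width $\asymp 1/\delta$; the hypothesised asymptotic for $V_F(X,\delta)$ thereby becomes an asymptotic identity for a smoothing of $\mathcal{F}_F(X,\cdot)$ with the correspondence $T\leftrightarrow 1/\delta$.

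The second step is to recover the pointwise value $\mathcal{F}_F(X,T)$ from this smoothed identity, which is a Tauberian problem. The two essential inputs are the non-negativity $\mathcal{F}_F(X,T)\ge 0$ for real $X$ and $T$ (which holds for every $F\in\mathcal{S}$, since $\mathcal{F}_F$ is a squared modulus) together with the trivial bound $\mathcal{F}_F(X,T)\ll T\log T$ available under GRH. Given these, the weighted Tauberian lemma of [\textbf{\ref{GM}}, Lemma 2] extracts the pointwise formula $\mathcal{F}_F(X,T)=T\log X/\pi+\text{error}$. Matching the main term is straightforward: expanding $(1+\delta)^{1/2+i\gamma_F}-1\sim i\gamma_F\delta$ and applying Parseval yields a leading contribution $\int_{-T}^{T}(\log X)\,dt/(2\pi)=T\log X/\pi$, which is exactly what one obtains by differentiating the hypothesised variance $\tfrac{1}{6}\delta X^2(3\log X-4\log 2)$ against the appropriate kernel (the $-\tfrac{4}{3}\log 2$ correction arises from the explicit kernel integrals and plays no role in the pointwise main term).

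The three error terms have transparent origins. The Tauberian optimisation, balancing the hypothesis error $\delta^{1+c}X^2$ against the width of the smoothing window, produces $T^{3/(3+c)+\varepsilon}$; truncation of the $\delta$-integral at the upper end $\delta\asymp X^{-B_2}$ gives $T^{1+\varepsilon}(T/X^{B_2})^{2}$, via a second-moment estimate on the low-frequency tail of $K_\delta$; and truncation at $\delta\asymp X^{-B_1}$ gives $T^{1+\varepsilon}(T/X^{B_1})^{-1/4}$, from a standard Fej\'er-tail bound combined with interpolation. The constraint $1/B_2<A_1\le A_2<1/B_1$ is precisely what makes both truncation errors smaller than the main term. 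The main obstacle is the quantitative Tauberian transfer: in particular one must carefully track the Selberg-class invariants (the degree $d_F$, the conductor $\mathfrak{q}_F$, and the gamma factors of $F$) through the Parseval kernel so that the smoothed identity in the intermediate regime $1<A_1\le A_2<d_F$ has the correct normalisation. The non-negativity of $\mathcal{F}_F$ carries over unchanged from the Riemann case, so no fundamentally new Tauberian input is required.
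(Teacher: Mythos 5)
Your overall strategy is the paper's: Theorem B2 is proved exactly as Theorem B1, namely (i) convert the hypothesis on $V_F(X,\delta)=\int_1^X$ into a statement about the two-sided weighted integral $\int_0^\infty\min\{x^2/X^2,X^2/x^2\}\,|\psi_F(x+\delta x)-\psi_F(x)-m_F\delta x|^2x^{-2}\,dx$ by partial integration, using Lemma \ref{SVbound} to control the range $x\gg\delta^{-1/B_1}$ where the hypothesis is unavailable (you omit this step, but Parseval does not apply directly to the sharply truncated $\int_1^X$); (ii) apply the explicit formula and Plancherel to identify the result with $\int(\sin\kappa t/t)^2\big|\sum_{\gamma_F}X^{i\gamma_F}(1+(t-\gamma_F)^2)^{-1}\big|^2dt$, $\kappa=\tfrac12\log(1+\delta)$; and (iii) run the converse Tauberian argument of Lemma \ref{converselot} on the manifestly non-negative function $\big|\sum_{\gamma_F}X^{i\gamma_F}(1+(t-\gamma_F)^2)^{-1}\big|^2$ — this, rather than $\mathcal{F}_F$ itself, is where positivity enters, though your formulation is equivalent in spirit. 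Your error-term bookkeeping is also essentially right: $T^{3/(3+c)+\varepsilon}$ from optimising the parameter $\eta$ in the kernel $K_\eta$, and the two truncation terms from the ranges of $\kappa$ outside $[X^{-B_2},X^{-B_1}]$ where only the a priori bounds on the integrand are available.

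The one genuine gap is your treatment of the main term. You assert that the $-\tfrac43\log 2$ in the hypothesis ``plays no role in the pointwise main term.'' That is precisely what has to be proved, and it is the only place where this second regime differs from the B1 template. Here the smoothed identity produced by steps (i)--(ii) has the form $\int(\sin\kappa t/t)^2f(t)\,dt=\tfrac{\pi}{2}\kappa\,C(X)+O(\cdots)$ with $C(X)$ constant in $\kappa$ (of the shape $\log X+c_0$), not $\tfrac{\pi}{2}\kappa(\log(1/\kappa)+B)$; so Lemma \ref{converselot} does not apply as stated and must be re-derived for a constant density, where the transfer is $\int_{-T}^{T}f=TC(X)+\cdots$ with no shift of the additive constant. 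Consequently any constant $c_0$ surviving the partial-integration step contributes $c_0T/\pi$ to $\mathcal{F}_F(X,T)$, which is smaller than the main term $T\log X/\pi$ but strictly larger than every error term in the conclusion (all are $o(T)$ under the constraints $1/B_2<A_1\le A_2<1/B_1$), so it cannot be waved away. You must compute $c_0$ explicitly from $X^{-2}V_F(X,\delta)+X^2\int_X^\infty x^{-4}\,dV_F(x,\delta)$ with $V_F(x,\delta)=\tfrac12\delta x^2(\log x-\tfrac43\log 2)$ and check that it vanishes; ``it arises from the explicit kernel integrals'' is not an argument. Relatedly, your stated ``main obstacle'' — tracking $d_F$, $\mathfrak{q}_F$ and the gamma factors through the kernel — is a red herring: in the range $T\le X\le T^{d_F}$ none of these invariants appears in either the hypothesis or the conclusion, and the delicate bookkeeping is entirely in the additive constant just described.
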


\begin{theoremC}\label{secondtheorem} 
Assume GRH. If $0<B_1<B_2\leq B_3<1/d_F$ and
\begin{eqnarray}\label{2.1}
V_F(X, \delta)=\frac{1}{2}\delta X^2\Big(d_F \log\frac{1}{\delta}+\log\mathfrak{q}_F+(1-\gamma_0-\log2\pi)d_F\Big)+O\big(\delta^{1+c} X^2\big)
\end{eqnarray}
uniformly for $X^{-B_3}\ll\delta\ll X^{-B_1}$ for some $c>0$, then we have
\begin{eqnarray}\label{2.2}
\tilde{V}_F(X,h)&=&h X\Big(d_F \log\frac{X}{h}+\log\mathfrak{q}_F-(\gamma_0+\log 2\pi)d_F\Big)\nonumber\\
&&\qquad\qquad+O_\varepsilon\big(hX^{1+\varepsilon}(h/X)^{c/3}\big)+O_\varepsilon\Big(hX^{1+\varepsilon}\big(hX^{-(1-B_1)}\big)^{1/3(1-B_1)}\Big)
\end{eqnarray}
uniformly for $X^{1-B_3}\ll h\ll X^{1-B_2}$.
\end{theoremC}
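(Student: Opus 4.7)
The plan is to convert the multiplicative-shift hypothesis \eqref{2.1} into the additive-shift asymptotic \eqref{2.2} by exploiting the elementary geometric fact that, on an $x$-interval of the form $[Y,(1+\eta)Y]$ with $\eta$ small, the ratio $h/x$ is essentially constant and equal to $\delta:=h/Y$; hence the additive shift $h$ can be replaced by the multiplicative shift $\delta x$ with controlled error.

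I would partition $[1,X]$ into pieces $[Y_k,Y_{k+1}]$ with $Y_{k+1}=(1+\eta)Y_k$, where $\eta>0$ is a parameter to be optimised later, starting from the smallest $Y_0$ for which $\delta_0:=h/Y_0$ lies in the range $[X^{-B_3},X^{-B_1}]$ of the hypothesis. Writing
\[
\tilde V_F(X,h)=\sum_k \int_{Y_k}^{Y_{k+1}}|\psi_F(x+h)-\psi_F(x)-m_F h|^2\,dx
\]
(plus a small-$x$ tail treated separately), on each piece I would compare with the $V_F$-type quantity
\[
V_F(Y_{k+1},\delta_k)-V_F(Y_k,\delta_k)=\int_{Y_k}^{Y_{k+1}}|\psi_F(x+\delta_k x)-\psi_F(x)-m_F\delta_k x|^2\,dx,
\]
where $\delta_k=h/Y_k$. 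Since $|h-\delta_k x|\le h\eta$ on $[Y_k,Y_{k+1}]$, the difference between the two integrands is controlled via the polarisation $a^2-b^2=(a-b)(a+b)$ and Cauchy--Schwarz, using Selberg's (GRH-conditional) upper bound for the additive variance on the shorter scale $h\eta$.

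The main terms from \eqref{2.1} applied to each $V_F(Y_{k+1},\delta_k)-V_F(Y_k,\delta_k)$ assemble into a Riemann sum which, as $\eta\to 0$, converges to
\[
h\int_{Y_0}^{X}\bigl(d_F\log(Y/h)+\log\mathfrak{q}_F+(1-\gamma_0-\log 2\pi)d_F\bigr)\,dY.
\]
Using $\int_{1}^{X}\log(Y/h)\,dY=X\log(X/h)-X+O(h)$ then produces exactly the main term of \eqref{2.2}; note in particular that the shift of the constant from $(1-\gamma_0-\log 2\pi)d_F$ to $-(\gamma_0+\log 2\pi)d_F$ arises precisely from the $-X$ term in this integration by parts.

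The main obstacle is the error analysis, which balances three competing contributions: (i) the local-approximation error from the Cauchy--Schwarz step, of size $\asymp \eta^{1/2}hX$ up to logarithms; (ii) the accumulated error from hypothesis \eqref{2.1} across the $\asymp\eta^{-1}\log X$ pieces, of size $\asymp\eta^{-1}hX(h/X)^{c}$; and (iii) the contribution from $x\lesssim hX^{B_1}$, where $\delta=h/x$ falls outside the range of \eqref{2.1} and must be handled by a crude variance bound plus the boundary term of the Riemann sum. Optimising $\eta\asymp(h/X)^{2c/3}$ to balance (i) against (ii) produces the first stated error $O_\varepsilon(hX^{1+\varepsilon}(h/X)^{c/3})$, while an analogous optimisation for (iii) yields the second error term involving $(hX^{-(1-B_1)})^{1/(3(1-B_1))}$. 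The most delicate parts of the argument are the sharpening of the short-interval variance bound invoked in (i) -- possibly by a bootstrap using a crude form of the conclusion -- and the careful bookkeeping required to convert the Cauchy--Schwarz and Riemann-sum errors into precisely the exponents stated.
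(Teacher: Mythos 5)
Your main-term computation is right, and your identification of the three error sources (local approximation, accumulated hypothesis error, and the range where $\delta=h/x$ leaves the hypothesis window) matches the structure of the correct argument, as does the final optimisation of $\eta$. But there is a genuine gap in step (i). Your piecewise comparison requires, on each piece $[Y_k,(1+\eta)Y_k]$, a bound of the shape
\[
\int_{Y_k}^{(1+\eta)Y_k}\big|f(x,h)-f(x,\delta_k x)\big|^2\,dx=\int_{Y_k}^{(1+\eta)Y_k}\big|f\big(x+h,\delta_k(x-Y_k)\big)\big|^2\,dx\ \ll\ \eta^2\, h\, Y_k\,(\log X)^{O(1)},
\]
i.e.\ a Saffari--Vaughan-type variance bound for shifts of length $\le \eta h$ that is \emph{localised} to an $x$-interval of length $\eta Y_k$ and proportional to that length. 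No such localised bound is available: Lemma \ref{SVbound} controls only integrals over $[1,X]$, GRH gives only the pointwise bound $f\ll x^{1/2}(\log x)^2$ (hopelessly large on a short $x$-range), and a bootstrap from the conclusion does not help either, since even an asymptotic $\tilde V_F(Y,h')=h'Y\log(Y/h')+o(h'Y)$ localises to $[Y_k,(1+\eta)Y_k]$ only up to an error $o(h'Y_k)$, not $o(\eta h'Y_k)$. If you instead use the global bound $\int_1^{Y_{k+1}}|f(\cdot,h')|^2\ll h'Y_{k+1}(\log)^2$ on each piece, the sum over the $\asymp\eta^{-1}\log X$ pieces loses a factor $\eta^{-1}$ and your error (i) becomes $\gg hX$, i.e.\ comparable to the main term.

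The paper's proof avoids this by never approximating piece by piece. It introduces an auxiliary average over the additive shift, $\int_X^{2X}\int_{H_1}^{H_2}|f(x,h)|^2\,dh\,dx$ with $H_2<2H_1$, and the substitution $h=\delta x$ converts this \emph{exactly} into integrals of $|f(x,\delta x)|^2x\,dx$, to which the hypothesis \eqref{2.1} applies after an integration by parts; the dyadic summation in $x$ and your contribution (iii) then enter exactly as you describe. The only approximation step is the final de-averaging in $h$: comparing $f(x,h)$ with $g(x,h)=f(x,H)$ for $h\in[H,(1+\eta)H]$, where $f-g=f(x+H,h-H)$ and the double integral $\int_0^{\eta H}\int_1^X$ is controlled by a \emph{single} application of the global bound \eqref{SV2}, giving $\ll\eta^2H^2X(\log)^2$ and hence, via Cauchy--Schwarz, the error $\eta^{1/2}HX(\log)^{3/2}$ you were aiming for. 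So the extra $h$-average is not a cosmetic choice; it is what makes the Cauchy--Schwarz step global and lets the unlocalised Saffari--Vaughan bound suffice. To repair your argument you would either have to adopt this device or prove a short-interval variance bound that does not currently exist.
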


\begin{theoremC}
Assume GRH. If $1/d_F<B_1<B_2\leq B_3<1$ and
\begin{eqnarray*}
V_F(X, \delta)=\frac{1}{6}\delta X^2\big(3\log X-4\log 2\big)+O\big(\delta^{1+c} X^2\big)
\end{eqnarray*}
uniformly for $X^{-B_3}\ll\delta\ll X^{-B_1}$ for some $c>0$, then we have
\begin{eqnarray*}
\tilde{V}_F(X,h)&=&\frac{1}{6}h X\Big(6\log X-\big(3+8\log 2\big)\Big)\\
&&\qquad\qquad+O_\varepsilon\big(hX^{1+\varepsilon}(h/X)^{c/3}\big)+O_\varepsilon\Big(hX^{1+\varepsilon}\big(hX^{-(1-B_1)}\big)^{1/3(1-B_1)}\Big)
\end{eqnarray*}
uniformly for $X^{1-B_3}\ll h\ll X^{1-B_2}$.
\end{theoremC}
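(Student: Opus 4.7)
The approach is a direct dyadic transfer from $V_F$ to $\tilde V_F$, in the spirit of Goldston--Montgomery, adapted to the Selberg class with $d_F\ge 2$ and to the large-$\delta$ regime $\delta\gg X^{-1/d_F}$ in which $\log(1/\delta)$ is absent from the main term of $V_F$. Writing $\Delta_F(x,h):=\psi_F(x+h)-\psi_F(x)-m_Fh$, I would partition $[1,X]$ into geometric blocks $[Y_k,Y_{k+1}]$ with $Y_k=(1+\eta)^k$ for a small parameter $\eta>0$ to be chosen, and on each block set $\delta_k:=h/Y_k$, so that $\delta_kx\in[h,h(1+\eta)]$ for $x\in[Y_k,Y_{k+1}]$. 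Decomposing
\[
\Delta_F(x,h)=\Delta_F(x,\delta_kx)+R_k(x,h),\qquad R_k(x,h):=\psi_F(x+h)-\psi_F(x+\delta_kx)-m_F(h-\delta_kx),
\]
writes $\tilde V_F(X,h)=\sum_k\int_{Y_k}^{Y_{k+1}}|\Delta_F(x,h)|^{2}\,dx$ as a diagonal term involving $\Delta_F(x,\delta_kx)$, a cross term, and a remainder $|R_k|^{2}$ that records the fluctuation of $\psi_F$ across the short interval of length $|h-\delta_kx|\le\eta h$.

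The diagonal term on each block equals $V_F(Y_{k+1},\delta_k)-V_F(Y_k,\delta_k)$, to which the hypothesis applies. A Taylor expansion in $\eta$ of the hypothesised main term $\tfrac16\delta X^{2}(3\log X-4\log 2)$ in $X$ around $X=Y_k$ (with $\delta=\delta_k$ held fixed) yields
\[
V_F(Y_{k+1},\delta_k)-V_F(Y_k,\delta_k)=\frac{\eta hY_k}{6}\bigl(6\log Y_k+3-8\log 2\bigr)+O\bigl(\eta^{2}hY_k\log Y_k\bigr)+O\bigl(h^{1+c}Y_k^{1-c}\bigr).
\]
Summation over $k$ approximates the Riemann integral $\frac{h}{6}\int_{1}^{X}(6\log y+3-8\log 2)\,dy=\frac{hX}{6}\bigl(6\log X-(3+8\log 2)\bigr)+O(h\log X)$, which is precisely the main term claimed in the theorem.

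The remainder and cross terms require a more delicate analysis. Since $R_k(x,h)=\Delta_F(x+\delta_kx,\,h-\delta_kx)$ is itself a variance increment of the same type as $\Delta_F$ but with length at most $\eta h$, a second application of the hypothesis at the reduced parameter $\delta'\ll\eta h/Y_k$ gives $\int_{Y_k}^{Y_{k+1}}|R_k|^{2}\,dx\ll\eta^{2}hY_k\log Y_k$, which sums to $O(\eta hX\log X)$. The cross term $\int\Delta_F(x,\delta_kx)\overline{R_k(x,h)}\,dx$ is then handled by a weighted Cauchy--Schwarz inequality, contributing $O(\sqrt{\eta}\,hX\log X)$ in total. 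Balancing the Cauchy--Schwarz loss against the accumulated hypothesis error $(h/X)^{c}hX/\eta$ forces $\eta=(h/X)^{2c/3}$ and delivers the first error term $O_\varepsilon(hX^{1+\varepsilon}(h/X)^{c/3})$.

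The principal technical obstacle will be the contribution of the initial blocks $Y_k\lesssim hX^{B_1}$, where $\delta_k=h/Y_k$ exceeds $X^{-B_1}$ and the hypothesis on $V_F$ is no longer available. I would treat this boundary range separately, using a crude bound on the relevant short-interval variance derived from GRH and the explicit formula for $\psi_F$; tracking the loss produces the second error term $O_\varepsilon(hX^{1+\varepsilon}(hX^{-(1-B_1)})^{1/3(1-B_1)})$, in which the exponent $1/3(1-B_1)$ encodes precisely the cost of not having an asymptotic for $V_F$ all the way down to $\delta=h/X$.
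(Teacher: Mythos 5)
Your skeleton --- geometric blocks in $x$, the identification of $\int_{Y_k}^{Y_{k+1}}|\Delta_F(x,\delta_kx)|^2dx$ with $V_F(Y_{k+1},\delta_k)-V_F(Y_k,\delta_k)$, the Riemann-sum recovery of the main term $\tfrac16 hX\big(6\log X-(3+8\log 2)\big)$, and the balancing of $\sqrt{\eta}\,hX\log X$ against the accumulated hypothesis error $hX(h/X)^{c}/\eta$ --- is sound and mirrors the structure of the paper's argument (given for Theorem C1; the C2 case is identical). The genuine gap is your bound $\int_{Y_k}^{Y_{k+1}}|R_k|^2dx\ll\eta^2hY_k\log Y_k$. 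Observe that $R_k(x,h)=-\Delta_F\big(x+h,\ \delta_kx-h\big)$, an increment whose \emph{length} $\delta_kx-h=h(x-Y_k)/Y_k$ varies with $x$ over $[0,\eta h]$: it is neither a fixed-length increment (to which Lemma \ref{SVbound} applies) nor a fixed-$\delta'$ increment (to which the hypothesis applies), but a diagonal slice through the two-parameter family $(y,u)\mapsto\Delta_F(y,u)$, and no lemma at your disposal controls such a slice. Majorizing by Lemma \ref{SVbound} with increment $\eta h$ over all of $[1,Y_{k+1}]$ gives only $\eta hY_k(\log)^2$ per block, which sums to $hX(\log X)^2$ --- the size of the main term --- and since $\Lambda_F(n)$ is not nonnegative for general $F\in\mathcal{S}$ you cannot dominate the varying-length short sum by one of fixed length $\eta h$. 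Without the second factor of $\eta$ here, the Cauchy--Schwarz cross term is not $O(\sqrt{\eta}\,hX\log X)$ and the optimization of $\eta$ collapses.

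The paper circumvents precisely this by decoupling the increment length from $x$: it first establishes the asymptotic for the doubly averaged quantity $\int_{H}^{(1+\eta)H}\int_1^X|f(x,h)|^2dx\,dh$, where the substitution $\delta=h/x$ and Fubini convert everything into fixed-$\delta$ integrals covered by the hypothesis, and only afterwards removes the $h$-average via $f(x,h)-f(x,H)=f(x+H,h-H)$, so that the error is a genuine double integral $\int_0^{\eta H}\int|f(x,h')|^2dx\,dh'$ to which Lemma \ref{SVbound} applies and which legitimately yields $\eta^2H^2X(\log(X/H))^2$. You would need to insert this auxiliary average over the increment length (or an equivalent device) to justify your $R_k$ estimate; once you do, your argument essentially becomes the paper's. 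Two smaller corrections: the hypothesis fails on blocks with $\delta_k\gg Y_k^{-B_1}$, i.e.\ $Y_k\ll h^{1/(1-B_1)}$, not $Y_k\lesssim hX^{B_1}$ (your quoted final error term does correspond to the correct threshold); and you should also verify the constraint $\delta_k\gg Y_k^{-B_3}$, which holds for all blocks because $h\gg X^{1-B_3}$.
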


\begin{theoremD}\label{thirdtheorem}
Assume GRH. If $0<B_1\leq B_2< B_3<1/d_F$ and
\begin{eqnarray*}
\tilde{V}_F(X,h)=h X\Big(d_F \log\frac{X}{h}+\log\mathfrak{q}_F-(\gamma_0+\log 2\pi)d_F\Big)+O\big(hX^{1-c}\big)
\end{eqnarray*}
uniformly for  $X^{1-B_3}\ll h\ll X^{1-B_1}$ for some $c>0$, then we have
\begin{eqnarray*}
V_F(X, \delta)&=&\frac{1}{2}\delta X^2\Big(d_F \log\frac{1}{\delta}+\log\mathfrak{q}_F+(1-\gamma_0-\log2\pi)d_F\Big)\\
&&\qquad\qquad+O_\varepsilon\big(\delta^{1-\varepsilon}X^{2-c/3}\big)+O_\varepsilon\Big(\delta^{1-\varepsilon}X^{2}\big(\delta X^{B_3}\big)^{-2/3B_3}\Big)
\end{eqnarray*}
uniformly for $X^{-B_2}\ll\delta\ll X^{-B_1}$.
\end{theoremD}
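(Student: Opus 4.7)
The strategy is to deduce the stated asymptotic for $V_F(X,\delta)$ from the hypothesis on $\tilde V_F(X,h)$, reversing the direction of Theorem C, by an adaptation of the Saffari--Vaughan technique. Set $\Delta_F(x;y) := \psi_F(x+y) - \psi_F(x) - m_F y$, so that $V_F(X,\delta) = \int_1^X \Delta_F(x;\delta x)^2 \, dx$. Decompose $[1,X]$ into dyadic sub-intervals $[Y,2Y]$ with $Y = X 2^{-k-1}$, $k=0,1,\ldots$. On each piece the substitution $h=\delta x$ yields
\[
\int_Y^{2Y} \Delta_F(x;\delta x)^2 \, dx \;=\; \frac{1}{\delta}\int_{\delta Y}^{2\delta Y} \Delta_F(h/\delta;h)^2 \, dh,
\]
converting an integral with proportional shift into the fixed-shift integrand evaluated at the single point $x=h/\delta$.

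We then convert the pointwise value $\Delta_F(h/\delta;h)^2$ into an integrated quantity accessible via the hypothesis. Introducing a smoothing parameter $\eta>0$, we use the approximation
\[
\Delta_F(h/\delta;h)^2 \;\approx\; \frac{\tilde V_F\big((1+\eta)h/\delta, h\big) - \tilde V_F\big(h/\delta, h\big)}{\eta\, h/\delta},
\]
whose error is controlled via a short-interval-variance estimate (itself reducible to the hypothesis at smaller shift). Substituting the asymptotic $\tilde V_F(Y,h) = hY(d_F\log(Y/h) + C) + O(hY^{1-c})$ with $C = \log\mathfrak q_F - (\gamma_0+\log 2\pi)d_F$, and integrating in $h$ over $[\delta Y, 2\delta Y]$, the leading contribution from the piece $[Y, 2Y]$ is $\tfrac{3}{2}\delta Y^2(d_F\log(1/\delta) + C + d_F)$, the factor $3/2$ arising from $\int_{\delta Y}^{2\delta Y} h\, dh/\delta$. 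Summing dyadically, the geometric series $\sum_{k \ge 0}(2^{-k-1})^2 = 1/3$ combines with the $3/2$ to produce exactly the desired main term $\tfrac12\delta X^2\big(d_F\log(1/\delta) + \log\mathfrak q_F + (1-\gamma_0-\log 2\pi)d_F\big)$.

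The error analysis has two competing sources. The $O(hY^{1-c})$ error in the hypothesis propagates to $O_\varepsilon(\delta^{1-\varepsilon}X^{2-c/3})$ after optimising $\eta$ to balance the propagated error against the smoothing error; the $c/3$ loss is the standard penalty for a single-parameter Tauberian smoothing of this type. The second error $O_\varepsilon(\delta^{1-\varepsilon}X^2(\delta X^{B_3})^{-2/(3B_3)})$ arises from dyadic pieces with $h = \delta Y < X^{1-B_3}$, which fall outside the validity range of the hypothesis and must be treated by an unconditional bound on $\tilde V_F$; the explicit exponent follows from optimising the location of the dyadic truncation jointly with $\eta$.

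The main obstacle is the joint balancing of these error sources: $\eta$ and the dyadic truncation scale must be chosen simultaneously so as to minimise the total contribution uniformly across scales, avoiding $\log X$ or $X^\varepsilon$ losses in the dyadic summation. A secondary technical point is the smoothing step itself, where one must show that replacing $\Delta_F(h/\delta;h)^2$ by its local average introduces an error bounded by the hypothesis at smaller but still admissible shifts, which is why the assumption places $B_1$ strictly below $B_3$.
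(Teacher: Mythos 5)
The paper does not write out a proof of Theorem D1: it states that the argument mirrors the proof of Theorem C1, run in reverse. Your overall architecture agrees with that mirror image, and your main-term bookkeeping is correct: the difference quotient of $hx\big(d_F\log(x/h)+C\big)$ in $x$ produces the extra $+d_F$, the factor $\tfrac32$ from $\tfrac1\delta\int_{\delta Y}^{2\delta Y}h\,dh$ combines with $\sum_{k\ge0}4^{-k-1}=\tfrac13$ to give $\tfrac12$, and the range $Y\ll\delta^{-1/B_3}$ must be handled by Lemma \ref{SVbound}. The gap is in the de-averaging step. You replace the point value $\Delta_F(h/\delta;h)^2$ by its average over the \emph{base point} $u\in[h/\delta,(1+\eta)h/\delta]$ at fixed shift $h$, and propose to control the error by a short-interval variance estimate. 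But with $x=h/\delta$ one has $\Delta_F(u;h)-\Delta_F(x;h)=\Delta_F(x+h;u-x)-\Delta_F(x;u-x)$ with $u-x\le\eta x$: perturbing the base point by $\eta x$ perturbs the count by a quantity whose mean square over $x\asymp Y$ is $\asymp\eta Y^2(\log)^2$ by Lemma \ref{SVbound}, which is comparable to the target $\delta Y^2\log(1/\delta)$ unless $\eta\ll\delta$. Cauchy--Schwarz then gives a smoothing error $\gg(\delta\eta)^{1/2}X^2$ up to logarithms, while the hypothesis error $O(hx^{1-c})$ enters the difference quotient undiminished and contributes $\gg\delta X^{2-c}/\eta$. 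Balancing forces $\eta=\delta^{1/3}X^{-2c/3}$ and a total error $\asymp\delta^{2/3}X^{2-c/3}$, which misses the claimed $\delta^{1-\varepsilon}X^{2-c/3}$ by a factor $\delta^{-1/3}$ and in fact exceeds the main term itself once $\delta\ll X^{-c}$, i.e.\ on most of the stated range.

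The repair — and this is the one genuinely non-trivial idea in the Saffari--Vaughan/Goldston--Montgomery scheme that the proposal omits — is to smooth in the \emph{shift} variable. Form $\int_\delta^{(1+\eta)\delta}\int_1^X|f(x,\delta'x)|^2\,dx\,d\delta'$ (equivalently, exchange the order of integration in your double integral so that for each $u$ the inner variable is $h\in[\delta u/(1+\eta),\delta u]$), evaluate it from the hypothesis via the change of variables $h=\delta'x$, and then compare with the unsmoothed quantity using $f(x,\delta'x)-f(x,\delta x)=f\big(x+\delta x,(\delta'-\delta)x\big)$, whose shift is at most $\eta\delta x$. Lemma \ref{SVbound} then gives $\int\!\!\int|f(x,\delta'x)-f(x,\delta x)|^2\ll\eta^2\delta^2X^2(\log)^2$, so after Cauchy--Schwarz and division by $\eta\delta$ the smoothing error is $\eta^{1/2}\delta X^2(\log)^{3/2}$ — a relative saving of $\eta^{1/2}$ rather than $(\eta/\delta)^{1/2}$. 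Balancing $\eta^{1/2}$ against $\eta^{-1}X^{-c}$ gives $\eta=X^{-2c/3}$ and the error term $\delta^{1-\varepsilon}X^{2-c/3}$ exactly as stated; the same three-halves-power balance applied to the sub-truncation contribution $\ll\delta(\delta^{-1/B_3})^2(\log)^2$ produces the exponent $-2/(3B_3)$ in the second error term (your attribution of that term purely to the truncated dyadic pieces gives only $(\delta X^{B_3})^{-2/B_3}$, which is too small; the $1/3$ in the exponent is again the Tauberian cost of the $\eta$-smoothing).
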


\begin{theoremD}
Assume GRH. If $1/d_F<B_1\leq B_2< B_3<1$ and
\begin{eqnarray*}
\tilde{V}_F(X,h)=\frac{1}{6}h X\Big(6\log X-\big(3+8\log 2\big)\Big)+O\big(hX^{1-c}\big)
\end{eqnarray*}
uniformly for  $X^{1-B_3}\ll h\ll X^{1-B_1}$ for some $c>0$, then we have
\begin{eqnarray*}
V_F(X, \delta)=\frac{1}{6}\delta X^2\big(3\log X-4\log 2\big)+O_\varepsilon\big(\delta^{1-\varepsilon}X^{2-c/3}\big)+O_\varepsilon\Big(\delta^{1-\varepsilon}X^{2}\big(\delta X^{B_3}\big)^{-2/3B_3}\Big)
\end{eqnarray*}
uniformly for $X^{-B_2}\ll\delta\ll X^{-B_1}$.
\end{theoremD}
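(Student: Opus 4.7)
The plan is to adapt the Saffari--Vaughan averaging device to convert a fixed-window variance estimate into a proportional-window one, running in reverse the scheme used for Theorem C. Write $I_F(x,h):=\psi_F(x+h)-\psi_F(x)-m_Fh$, so that $V_F(X,\delta)=\int_1^X|I_F(x,\delta x)|^2\,dx$ and $\tilde V_F(X,h)=\int_1^X|I_F(x,h)|^2\,dx$. I first introduce a small parameter $\eta=\eta(X)>0$, to be optimised, and partition $[1,X]$ into geometric blocks $[x_j,x_{j+1}]$ with $x_{j+1}=(1+\eta)x_j$. On each block the shift $\delta x$ lies within $\delta\eta x_j$ of the fixed value $\delta x_j$, so I write
\[
I_F(x,\delta x)=I_F(x,\delta x_j)+J_j(x),\qquad J_j(x):=\psi_F(x+\delta x)-\psi_F(x+\delta x_j)-m_F\delta(x-x_j).
\]
Squaring and integrating over the block converts the first piece into the telescoping quantity $\tilde V_F(x_{j+1},\delta x_j)-\tilde V_F(x_j,\delta x_j)$, while $J_j$, after translation $u=x+\delta x_j$, is itself an $I_F$-type quantity at shift at most $\delta\eta x_j$ and is controlled by $\tilde V_F$ at that shorter scale; the cross term is handled by Cauchy--Schwarz.

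Inserting the hypothesised asymptotic into each telescoping piece, the difference $\tilde V_F(x_{j+1},\delta x_j)-\tilde V_F(x_j,\delta x_j)$ becomes $\eta x_j\bigl(\delta x_j\log x_j+\tfrac12\delta x_j-\tfrac{4}{3}\delta x_j\log 2\bigr)$ modulo errors of sizes $O(\eta^2\delta x_j^2\log x_j)$ and $O(\delta x_j^{2-c})$, on those $j$ for which $\delta x_j$ lies in the admissible range $[x_j^{1-B_3},x_j^{1-B_1}]$. Summing over $j$ realises the leading terms as a Riemann sum for
\[
\delta\int_0^X\bigl(x\log x+\tfrac{x}{2}-\tfrac{4x}{3}\log 2\bigr)\,dx=\tfrac{1}{6}\delta X^2(3\log X-4\log 2),
\]
which is exactly the claimed main term. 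The Riemann-sum error is $O(\eta\,\delta X^2\log X)$, the summed hypothesis error is $O(\delta X^{2-c})$, and the $J_j$ pieces together with the cross terms contribute $O(\sqrt{\eta}\,\delta X^2\log X)$ via Cauchy--Schwarz combined with the $\tilde V_F$ bound at the shorter shift $\delta\eta x_j$.

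Balancing $\sqrt{\eta}\,\delta X^2$ against $\delta X^{2-c}$ forces $\eta\asymp X^{-2c/3}$ and produces the first error term $O_\varepsilon(\delta^{1-\varepsilon}X^{2-c/3})$. The second error $O_\varepsilon\bigl(\delta^{1-\varepsilon}X^2(\delta X^{B_3})^{-2/3B_3}\bigr)$ arises from the boundary blocks $x_j\lesssim\delta^{-1/B_3}$ on which $\delta x_j<x_j^{1-B_3}$: here the hypothesis is silent, and I substitute the GRH-conditional bound $|I_F(x,h)|\ll x^{1/2}(\log x)^2$; the crossover threshold, optimised against the Riemann-sum contribution from admissible blocks, produces exactly this power of $\delta X^{B_3}$. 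The main obstacle is this boundary analysis, together with the subtler requirement that the shorter shift $\delta\eta x_j$ at which $\tilde V_F$ must be invoked to control $J_j$ also remain admissible; the strict inequalities $1/d_F<B_1$ and $B_3<1$ provide the room needed to accommodate both the $\eta$-power reduction of the shift and the $\delta^{1-\varepsilon}$ loss. Once these two sources of error are cleanly isolated, the remaining bookkeeping follows the higher-degree adaptation of the Goldston--Montgomery scheme already used in Theorem C, the only difference being that the main term lacks the $\log\mathfrak{q}_F$ contribution appearing in the first version of Theorem D.
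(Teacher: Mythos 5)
The paper omits the proof of this theorem, indicating only that it mirrors the proof of Theorem C1 (a Saffari--Vaughan averaging in the window parameter followed by a Cauchy--Schwarz un-averaging); your block decomposition in $x$ with locally constant shift is a legitimate reorganisation of that scheme, and your main-term bookkeeping (the telescoped derivative $\delta x_j\log x_j+\tfrac12\delta x_j-\tfrac43\delta x_j\log 2$ and the resulting Riemann sum) is correct. However, there are two concrete gaps in the error analysis. First, the control of the correction terms $J_j$ fails as stated: bounding $\int_{x_j}^{x_{j+1}}|J_j|^2$ block by block via a variance estimate at shift $\leq\delta\eta x_j$ only gives $\ll\delta\eta x_j^2(\log)^2$ per block (the Saffari--Vaughan bound does not localise to a single block), so $\sum_j\int|J_j|^2\ll\delta X^2(\log)^2$ and the Cauchy--Schwarz cross terms are then of size $\delta X^2(\log)^{3/2}$, which swamps the main term. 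The fix is to recombine the $J_j$ across \emph{all} blocks into a single proportional-window variance at scale $O(\delta\eta)$ --- since $J_j(x)=\psi_F(x+\delta x)-\psi_F(x+\delta x_j)-m_F\delta(x-x_j)$ has shift $\delta(x-x_j)\leq\delta\eta x$ --- and apply estimate \eqref{SV1} of Lemma \ref{SVbound} globally, giving $\sum_j\int|J_j|^2\ll\delta\eta X^2(\log\tfrac{2}{\delta\eta})^2$; equivalently, average over the shift parameter first, exactly as the proof of Theorem C1 does when it writes $\int\!\!\int|f-g|^2=\int_0^{\eta H}\!\int|f(x,h)|^2\,dx\,dh$. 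Only then do the cross terms come out as $O(\sqrt{\eta}\,\delta X^2(\log X)^{3/2})$ as you claim.

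Second, your treatment of the boundary blocks $x_j\ll\delta^{-1/B_3}$ via the pointwise GRH bound $|I_F(x,h)|\ll x^{1/2}(\log x)^2$ is too lossy: it contributes $\ll\delta^{-2/B_3+\varepsilon}$, a factor $\delta^{-1}$ worse than the $\delta^{1-2/B_3}(\log\tfrac2\delta)^2$ obtained from \eqref{SV1}, and after the $\eta$-optimisation this produces an error term larger than the stated $O_\varepsilon(\delta^{1-\varepsilon}X^2(\delta X^{B_3})^{-2/3B_3})$ by a factor of about $\delta^{-1/3}$. You must use the $L^2$ bound of Lemma \ref{SVbound} on $[1,\delta^{-1/B_3}]$, as the paper does in the analogous step of Theorems A1 and C1. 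Finally, a bookkeeping point: the summed hypothesis error is $O(\eta^{-1}\delta X^{2-c})$, not $O(\delta X^{2-c})$, since $\sim\eta^{-1}$ blocks near the top each contribute $O(\delta X^{2-c})$ without cancellation; it is the three-way balance $\sqrt{\eta}\,\delta X^2\asymp\eta^{-1}\delta X^{2-c}$ that yields $\eta\asymp X^{-2c/3}$ and the exponent $c/3$, so your stated balancing is internally inconsistent even though it lands on the right choice of $\eta$.
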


\begin{remark}
\emph{The main motivation for proving these theorems comes from the fact, shown in Sections \ref{Ratios} and \ref{Paircorr}, that the Selberg Orthogonality Conjecture and the ratios conjecture [\textbf{\ref{CFZ}}, \textbf{\ref{CS}}] for $F\in\mathcal{S}$ imply that 
\begin{eqnarray*}
\tilde{\mathcal{F}}_F(T^\alpha,T)&=&\left\{ \begin{array}{ll}
\frac{T\log X}{\pi}+O_\varepsilon(T^{\alpha/d_F+\varepsilon})+O_\varepsilon(T^{1/2+\varepsilon}) &\qquad \textrm{if $\alpha<d_F$,}\\
\frac{T}{\pi}\log \frac{ \mathfrak{q}_FT^{d_F}}{(2\pi)^{d_F}}-\frac{d_FT}{\pi}+O_\varepsilon(T^{1/2+\varepsilon}) & \qquad\textrm{if $\alpha>d_F$,} 
\end{array} \right.
\end{eqnarray*}
for a smoothed form of the pair correlation $\tilde{\mathcal{F}}_F(X,T)$ defined by
\begin{eqnarray*}
\tilde{\mathcal{F}}_F(X,T)=\sum_{-T\leq\gamma_F,\gamma'_F\leq T}X^{i(\gamma_F-\gamma'_F)}e^{-(\gamma_F-\gamma'_F)^2}.
\end{eqnarray*}
We expect that $\mathcal{F}_F(X,T)$ and $\tilde{\mathcal{F}}_F(X,T)$ satisfy the same estimates, at least up to some power saving error term, and these are the forms that appear in the theorems quoted above.  Alternatively, if we were to replace $\mathcal{F}_F(X,T)$ by $\tilde{\mathcal{F}}_F(X,T)$ in the statements of the above theorems, we would obtain correspondingly smoothed forms of the variances $V_F(X, \delta)$ and $\tilde{V}_F(X, h)$ instead; that is, variances involving averages with weight-functions whose mass is concentrated on $(1,X)$\footnote{For precise statements and proofs see [\textbf{\ref{D}}].}. We establish the form of the ratios conjecture we need in Section \ref{Ratios} and from this obtain the above formulae for $\tilde{\mathcal{F}}_F(X,T)$ in Section \ref{Paircorr}.}
\end{remark}

\begin{remark}
\emph{We draw attention in particular to the fact that when $d_F=1$ our theorems describe only one regime, but when $d_F\ge 2$ a new regime (described, for example, by Theorem A2) comes into play; the variances when $d_F\ge 2$ are therefore qualitatively different to when $d_F=1$.  We illustrate this in the following two figures, which show data from numerical computations.  In both cases we plot $\frac{\tilde{V}_F(X, h)}{h X}$ against $\log \frac{X}{h}$, for a fixed value of $X$ as $h$ varies and overlay the straight lines coming from the formulae for the variances described in the above theorems. In the first case, shown in Figure 1, $F$ is the Riemann zeta-function (so $\Lambda_F$ is just the von Mangoldt function)  and $X = 15 000 000$.  This is, of course, an example with $d_F=1$ and so one sees a single regime that is well described by (\ref{Zeta variance h LOT}).}

\begin{figure}\label{Figure 1} 
\includegraphics[scale=0.75]{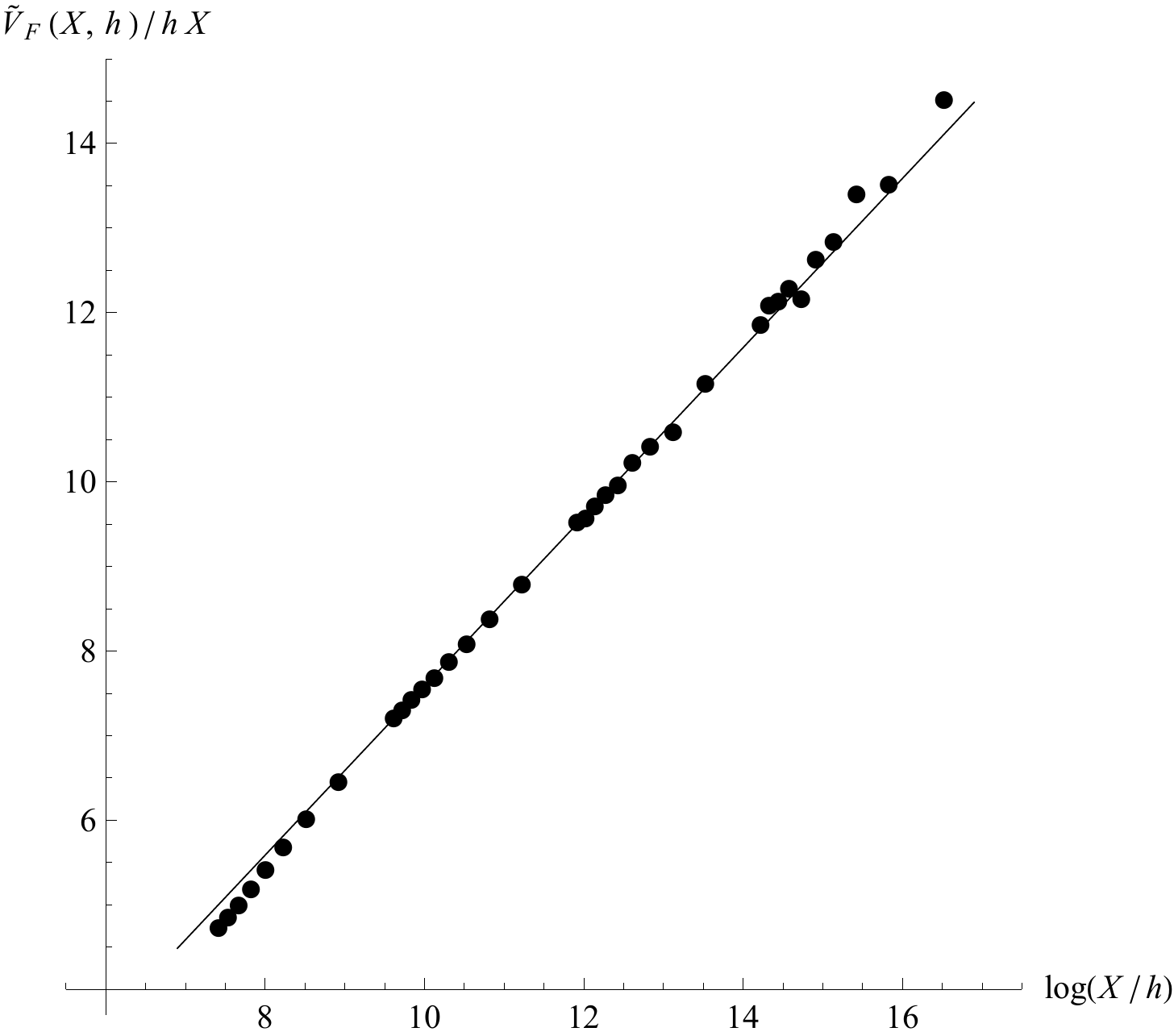}
\caption{$\frac{\tilde{V}_F(X, h)}{h X}$ plotted against $\log \frac{X}{h}$ when $F$ is the Riemann zeta-function and $X = 15 000 000$.  The line corresponds to (\ref{Zeta variance h LOT}). }
\end{figure}

\emph{By way of contrast, we plot in Figure 2 data for two $L$-functions with $d_F=2$.  In these examples $X = 1 000 000$.  The straight lines correspond to the formulae for the two regimes described by Theorems C1 and C2.}

\begin{figure}\label{Figure 2}
\includegraphics[scale=0.75]{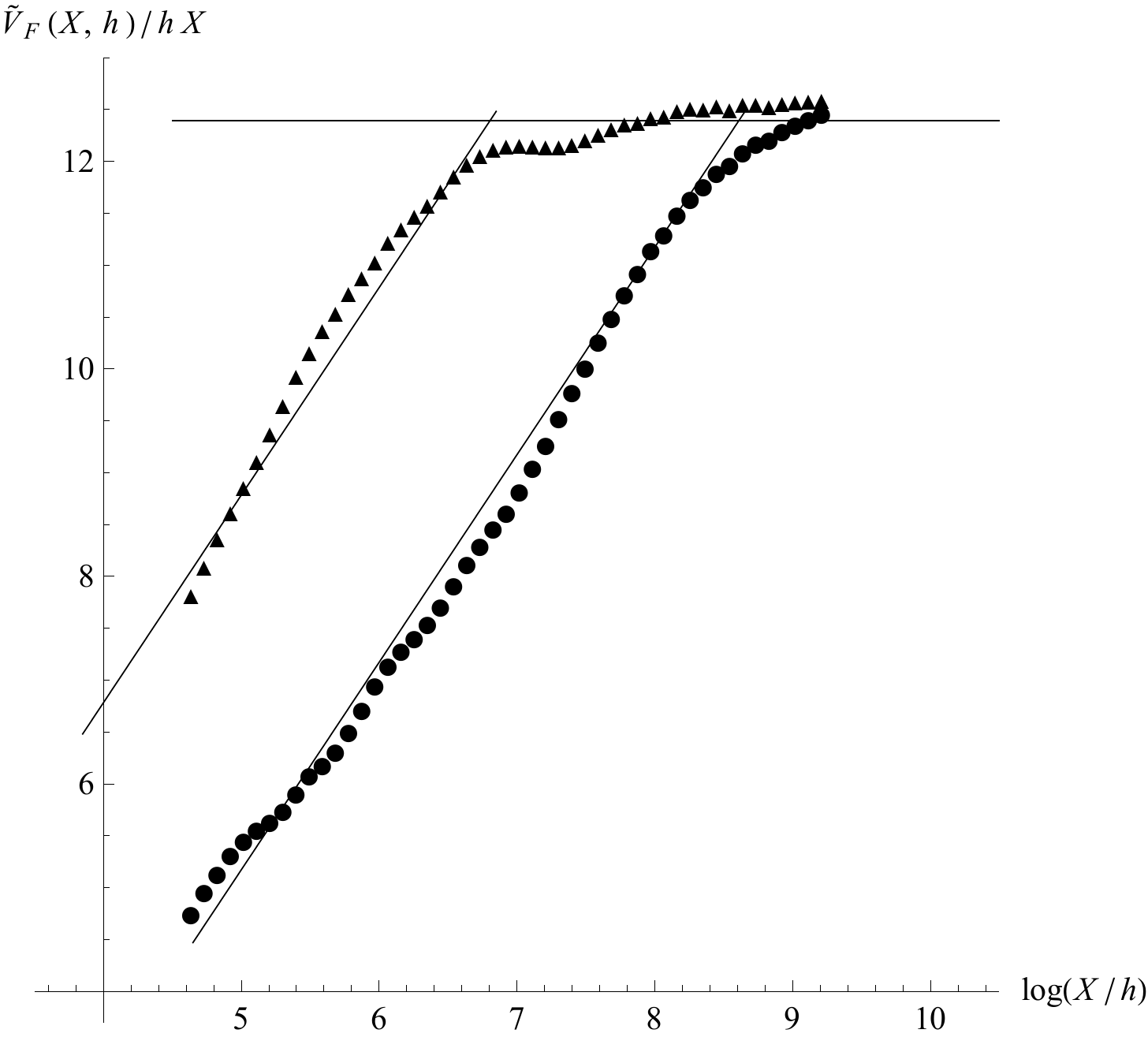}
\caption{$\frac{\tilde{V}_F(X, h)}{h X}$ plotted against $\log \frac{X}{h}$ when $F$ is associated with the Ramanujan $\tau$-function ($\bullet$) and with an elliptic curve of conductor 37 ($\blacktriangle$).  Here $X = 1 000 000$.  The lines correspond to the formulae for the two regimes described by Theorems C1 and C2. }
\end{figure}
\end{remark}

\begin{remark}
\emph{Note that, unlike the case of the Riemann zeta-function considered in [\textbf{\ref{GM}}], the A Theorems are not exactly the converse of the B Theorems, and the C Theorems are not exactly the converse of the D Theorems.  They are close to being the converse of each other, but with the power saving errors we have here, the intervals of uniformity do not match precisely.}
\end{remark}

The proofs of the theorems within each pair are essentially identical, so we only give the proofs of Theorems A1, B1 and C1.  Likewise, the proofs of Theorems D1 and D2 are similar to the proofs of C1 and C2, so we omit them too.

\section{Auxiliary lemmas}

\begin{lemma}\label{lot}
Suppose $f$ is a non-negative function with $f(t)\ll_\varepsilon |t|^\varepsilon$. If 
\[
\int_{-T}^{T}f(t)dt=T\big(\log T+A\big)+O\big(T^{1-c}\big)
\]
uniformly for $\kappa^{-(1-c_1)}\leq T\leq\kappa^{-(1+c_2)}$ for some  $A\in\mathbb{R}$ and $0<c,c_1,c_2<1$, then
\[
I(\kappa):=\int_{-\infty}^{\infty}\bigg(\frac{\sin\kappa u}{u}\bigg)^2f(u)du = \frac{\pi}{2}\kappa \Big(\log\frac{1}{\kappa}+B\Big)+O\big(\kappa^{1+c}\big)+O_\varepsilon\big(\kappa^{1+c_1-\varepsilon}\big)+O_{\varepsilon}\big(\kappa^{1+c_2-\varepsilon}\big)
\]
as $\kappa\rightarrow 0^{+}$, with $B=A+2-\gamma_0-\log 2$.
\end{lemma}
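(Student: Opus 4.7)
The plan is to write $I(\kappa)$ as a Stieltjes integral against $F(T) := \int_{-T}^{T} f(t)\,dt$, split the range at $T_1 := \kappa^{-(1-c_1)}$ and $T_2 := \kappa^{-(1+c_2)}$, insert the hypothesised asymptotic for $F$ in the middle window, and bound the tails trivially. Since $g(u) := (\sin(\kappa u)/u)^2$ is even, $I(\kappa) = \int_0^\infty g(u)\,dF(u)$. Writing $F(T) = M(T) + E(T)$ with $M(T) = T\log T + AT$, the main term is $\int_0^\infty g(u)\,dM(u)$; the substitution $v = \kappa u$ converts this into $\kappa \int_0^\infty \frac{\sin^2 v}{v^2}(\log v - \log\kappa + A + 1)\,dv$, and the classical evaluations $\int_0^\infty \frac{\sin^2 v}{v^2}\,dv = \pi/2$ and $\int_0^\infty \frac{\sin^2 v}{v^2}\log v\,dv = \frac{\pi}{2}(1 - \gamma_0 - \log 2)$ produce exactly $\frac{\pi}{2}\kappa(\log(1/\kappa) + B)$.

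For the error, I would write $I(\kappa) - \frac{\pi}{2}\kappa(\log(1/\kappa)+B) = \int_0^\infty g(u)\,dE(u)$ and handle the three subranges separately. On $[0,T_1]$, using $g(u) \leq \kappa^2$ together with $f \geq 0$ and $f(u) \ll u^\varepsilon$, the bounds $\int_0^{T_1} g(u)\,dF(u) \leq \kappa^2 F(T_1) \ll \kappa^2 T_1^{1+\varepsilon}$ and $\int_0^{T_1} g(u)\,dM(u) \ll \kappa^2 T_1\log T_1$ each contribute $O_\varepsilon(\kappa^{1+c_1-\varepsilon})$. On $[T_2,\infty)$, using $g(u) \leq u^{-2}$, $f \geq 0$ and $f(u) \ll u^\varepsilon$, I get $\int_{T_2}^\infty g(u)\,dF(u) \ll \int_{T_2}^\infty u^{\varepsilon-2}\,du \ll T_2^{\varepsilon-1}$, and the corresponding main-term tail admits the same bound, producing $O_\varepsilon(\kappa^{1+c_2-\varepsilon})$.

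The heart of the argument is the middle range. Integration by parts gives
\[
\int_{T_1}^{T_2} g(u)\,dE(u) = g(T_2)E(T_2) - g(T_1)E(T_1) - \int_{T_1}^{T_2} g'(u)E(u)\,du,
\]
with boundary terms $O(\kappa^{1+c})$ via $|E(T)| \ll T^{1-c}$, $|g(T_1)| \leq \kappa^2$ and $|g(T_2)| \leq T_2^{-2}$. The main obstacle I anticipate is bounding the remaining integral, because the naive estimate $|g'(u)| \ll \kappa/u^2 + u^{-3}$ is not sharp enough for $u \ll 1/\kappa$. The crucial observation is that in $g'(u) = \kappa\sin(2\kappa u)/u^2 - 2\sin^2(\kappa u)/u^3$ the leading $2\kappa^2/u$ pieces cancel, so a short Taylor expansion yields $|g'(u)| \ll \kappa^4 u$ for $u \leq 1/\kappa$. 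With this refinement, $\int_{T_1}^{1/\kappa} \kappa^4 u \cdot u^{1-c}\,du \ll \kappa^{1+c}$, while for $u \geq 1/\kappa$ the cruder bound $|g'(u)| \ll \kappa/u^2$ delivers $\int_{1/\kappa}^{T_2} \kappa u^{-1-c}\,du \ll \kappa^{1+c}$. Collecting these three contributions produces the desired $O(\kappa^{1+c})$ from the middle range and completes the proof.
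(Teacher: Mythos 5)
Your proposal is correct and follows essentially the same route as the paper: the same split at $\kappa^{-(1-c_1)}$ and $\kappa^{-(1+c_2)}$, the same trivial tail bounds from $f(t)\ll_\varepsilon|t|^\varepsilon$, integration by parts in the middle range against the accumulated error $E(u)\ll u^{1-c}$, and the same evaluation of $\int_0^\infty \frac{\sin^2 v}{v^2}\log v\,dv$ for the main term. The only (cosmetic) divergence is that the cancellation you call crucial in $g'$ is not actually needed: bounding the two terms of $g'(u)=\kappa\sin(2\kappa u)/u^2-2\sin^2(\kappa u)/u^3$ separately via $|\sin x|\le|x|$ already gives $|g'(u)|\ll\kappa^2/u$ for $u\le1/\kappa$, which with $E(u)\ll u^{1-c}$ yields the same $O(\kappa^{1+c})$, and this is in effect what the paper does.
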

\begin{proof}
As in the proof of Lemma 2 of Goldston and Montgomery [\textbf{\ref{GM}}], we write 
\begin{eqnarray*}
I(\kappa)&=&\bigg(\int_{-U_1}^{U_1}\bigg)+\bigg(\int_{-U_2}^{-U_1}+\int_{U_1}^{U_2}\bigg)+\bigg(\int_{-\infty}^{-U_2}+\int_{U_2}^{\infty}\bigg)\\
&=&I_1(\kappa)+I_2(\kappa)+I_3(\kappa),
\end{eqnarray*}
say, where
\[
U_1=\kappa^{-(1-c_1)}\quad\textrm{and}\quad U_2=\kappa^{-(1+c_2)}.
\]

Since $f(t)\ll_\varepsilon |t|^\varepsilon$, we have
\begin{equation}\label{J1}
I_1(\kappa)\ll_\varepsilon \int_{-U_1}^{U_1}\kappa^2| u|^\varepsilon du\ll_\varepsilon \kappa^2U_{1}^{1+\varepsilon}\ll_{\varepsilon}\kappa^{1+c_1-\varepsilon}.
\end{equation}
Similarly,
\begin{equation}\label{J3}
I_3(\kappa)\ll_\varepsilon \int_{U_2}^{\infty}u^{-2+\varepsilon}du\ll_\varepsilon U_{2}^{-1+\varepsilon}\ll_{\varepsilon}\kappa^{1+c_2-\varepsilon}.
\end{equation}

To treat $I_2(\kappa)$ we let 
\[
r(t)=f(t)+f(-t)-\big(\log t+A+1\big)
\]
and
\[
R(u)=\int_{0}^{u}r(t)dt=\int_{0}^{u}\big(f(t)+f(-t)\big)dt-u\big(\log u+A\big).
\]
Then $R(u)\ll u^{1-c}$ uniformly for $U_1\leq u\leq U_2$, and
\begin{eqnarray*}
I_2(\kappa)&=&\int_{U_1}^{U_2}\bigg(\frac{\sin\kappa u}{u}\bigg)^2\big(f(u)+f(-u)\big)du\nonumber\\
&=&\int_{U_1}^{U_2}\bigg(\frac{\sin\kappa u}{u}\bigg)^2\big(\log u+A+1\big)du+\int_{U_1}^{U_2}\bigg(\frac{\sin\kappa u}{u}\bigg)^2dR(u).
\end{eqnarray*}
Integrating by parts, the second integral is
\begin{eqnarray*}
\ll \kappa^2 R(U_1)+U_{2}^{-2}R(U_2)+\int_{U_1}^{U_2}\big|R(u)\big|\bigg(\bigg|\frac{\kappa\sin2\kappa u}{u^2}\bigg|+\bigg|\frac{(\sin\kappa u)^2}{u^3}\bigg|\bigg)du\ll\kappa^{1+c}.
\end{eqnarray*}
For the first integral, we extend the range of integration to $[0,\infty)$. As in the treatment for $I_1(\kappa)$ and $I_3(\kappa)$, this introduces an error term of size $\ll_{\varepsilon}\kappa^{1+c_1-\varepsilon}+\kappa^{1+c_2-\varepsilon}$. Hence
\begin{equation}\label{J2}
I_2(\kappa)=\int_{0}^{\infty}\bigg(\frac{\sin\kappa u}{u}\bigg)^2\big(\log u+A+1\big)du+O\big(\kappa^{1+c}\big)+O_{\varepsilon}\big(\kappa^{1+c_1-\varepsilon}\big)+O_{\varepsilon}\big(\kappa^{1+c_2-\varepsilon}\big).
\end{equation}

In view of \eqref{J1}--\eqref{J2} we are left to estimate the main term, which is
\begin{eqnarray*}
&&\kappa\int_{0}^{\infty}\bigg(\frac{\sin u}{u}\bigg)^2\Big(\log u+\log\frac{1}{\kappa}+A+1\Big)du\\
&&\qquad\qquad=\frac{\pi}{2}\big(1-\gamma_0-\log 2\big)\kappa+\frac{\pi}{2}\kappa\Big(\log\frac{1}{\kappa}+A+1\Big)\\
&&\qquad\qquad=\frac{\pi}{2}\kappa\Big(\log\frac{1}{\kappa}+A+2-\gamma_0-\log 2\Big),
\end{eqnarray*}
and the lemma follows.
\end{proof}

\begin{lemma}\label{converselot}
Suppose $f,g$ are non-negative functions with $f(t)\ll_\varepsilon |t|^\varepsilon$. If 
\[
I(\kappa):=\int_{-\infty}^{\infty}\bigg(\frac{\sin\kappa u}{u}\bigg)^2f(u)du = \frac{\pi}{2}\kappa \Big(\log\frac{1}{\kappa}+B\Big)+O\big(\kappa^{1+c}g(T)\big)
\]
uniformly for $T^{-(1+c_1)}\leq \kappa\leq T^{-(1-c_2)}$ for some  $B\in\mathbb{R}$ and $0<c,c_1,c_2<1$, then
\[\int_{-T}^{T}f(t)dt = T\big(\log T+A\big)+O_\varepsilon\Big( \big(T^3g(T)\big)^{1/(3+c)+\varepsilon}\Big)+O_\varepsilon\big( T^{1-2c_1+\varepsilon}\big)+O_\varepsilon\big( T^{1-c_2/4+\varepsilon}\big)
\]
as $T\rightarrow \infty$, with $A=B-2+\gamma_0+\log 2$.
\end{lemma}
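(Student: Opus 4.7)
The plan is to recover $\int_{-T}^T f(t)\,dt$ from knowledge of $I(\kappa)$ in a neighbourhood of $\kappa=1/T$, by the majorant/minorant strategy of Lemma 1 of Goldston--Montgomery [\textbf{\ref{GM}}]. For a parameter $\delta>0$ to be chosen, I would construct non-negative functions $\phi_\pm(u)$ with $\phi_-(u)\leq \chi_{[-T,T]}(u)\leq \phi_+(u)$ and $\int(\phi_+-\phi_-)\,du\ll \delta T$, such that each $\phi_\pm$ can be written as a weighted integral $\int a_\pm(\kappa) K_\kappa(u)\,d\kappa$ of the kernels $K_\kappa(u)=(\sin \kappa u/u)^2$, with weight $a_\pm$ supported in a narrow interval of $\kappa$ around $1/T$. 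Since $f\geq 0$, integrating against $f$ sandwiches $\int_{-T}^{T}f(t)\,dt$ between two weighted averages of $I(\kappa)$.

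Substituting the hypothesis $I(\kappa)=(\pi/2)\kappa(\log(1/\kappa)+B)+O(\kappa^{1+c}g(T))$ into these weighted averages, I expect the main term to contribute $T(\log T+A)+O(\delta T\log T)$ with $A=B-2+\gamma_0+\log 2$, by a direct computation that essentially inverts the main calculation in the proof of Lemma \ref{lot}. The error term in the hypothesis, accumulated against the weights $a_\pm$, should contribute something of the shape $\delta^{-(2+c)}T^{1-c}g(T)$, the negative power of $\delta$ reflecting the concentration of $a_\pm$ near $\kappa=1/T$ needed to approximate $\chi_{[-T,T]}$ well. Balancing $\delta T^{1+\varepsilon}$ against $\delta^{-(2+c)}T^{1-c}g(T)$ by the optimal choice $\delta\asymp(T^{-c}g(T))^{1/(3+c)}$ then produces the first stated error $O_\varepsilon((T^3g(T))^{1/(3+c)+\varepsilon})$.

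The remaining two errors, $O(T^{1-2c_1+\varepsilon})$ and $O(T^{1-c_2/4+\varepsilon})$, arise from the fact that the hypothesis is only available for $\kappa\in[T^{-(1+c_1)},T^{-(1-c_2)}]$. Outside this range, one must fall back on the trivial estimate $I(\kappa)\ll_\varepsilon\kappa^{1-\varepsilon}$ which follows from $f(t)\ll_\varepsilon|t|^\varepsilon$ and the substitution $v=\kappa u$. The contributions from the pieces of the integrals where $a_\pm$ is supported outside the range of validity then produce the stated exponents, with the asymmetry between $2c_1$ and $c_2/4$ reflecting the different ways the small-$\kappa$ and large-$\kappa$ tails enter the majorant/minorant construction (the former via a pointwise bound, the latter only after a further partial-summation argument that loses more).

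The main obstacle is the opening step: designing $\phi_\pm$ together with their representations as averages of $K_\kappa$'s, and tracking the dependence of each error on $\delta$ precisely enough to justify the quoted exponents. This is a delicate Fourier-analytic construction of Beurling--Selberg type. Once it is in place the rest of the proof reduces to routine (if intricate) bookkeeping of error terms and the final optimization of $\delta$.
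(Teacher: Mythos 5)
Your proposal follows essentially the same route as the paper: the paper realises your majorant as $\hat{K_\eta}(t/T)$ for the explicit kernel $K_\eta(x)=\frac{\sin 2\pi x+\sin 2\pi(1+\eta)x}{2\pi x(1-4\eta^2x^2)}$ (taken from Chan), and the representation of $\phi_\pm$ as a superposition of Fej\'er kernels is exactly the identity $\hat{K_\eta}(t)=\int_0^\infty K_\eta''(x)\left(\frac{\sin\pi tx}{\pi t}\right)^2dx$ obtained by integrating by parts twice, with $K_\eta''(x)\ll\min\{1,\eta^{-3}|x|^{-3}\}$ supplying the weights you postulate. Your bookkeeping and the optimisation (balancing $\eta T$ against $\eta^{-(2+c)}T^{1-c}g(T)$ and against the $\eta^{-3}T^{1-c_2+\varepsilon}$ tail term, the latter accounting for the exponent $c_2/4$) match the paper's proof.
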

\begin{proof}
Let
\[
r(u)=f(u)+f(-u)-\big(\log u+B-1+\gamma_0+\log 2\big),
\]
and
\[
R(\kappa)=\int_{0}^{\infty}\bigg(\frac{\sin\kappa u}{u}\bigg)^2r(u)du.
\]
Then we have
\begin{eqnarray}\label{bound1}
R(\kappa)&=&I(\kappa)-\int_{0}^{\infty}\bigg(\frac{\sin\kappa u}{u}\bigg)^2\big(\log u+B-1+\gamma_0+\log 2\big)du\nonumber\\
&=&I(\kappa)- \frac{\pi}{2}\kappa \Big(\log\frac{1}{\kappa}+B\Big)\ll\kappa^{1+c}g(T)
\end{eqnarray}
uniformly for $T^{-(1+c_1)}\leq \kappa\leq T^{-(1-c_2)}$. Also, since $f(t)\ll_\varepsilon |t|^\varepsilon$, we get
\begin{equation}\label{bound2}
R(\kappa)\ll_\varepsilon\int_{0}^{\infty}\min\big\{\kappa^2,u^{-2}\big\}|u|^\varepsilon du\ll_\varepsilon \kappa^{1-\varepsilon}
\end{equation}
for all $\kappa\geq0$.

Let
\[
K_\eta(x)=\frac{\sin2\pi x+\sin2\pi(1+\eta)x}{2\pi x(1-4\eta^2x^2)}
\]
for $\eta>0$. Then
\begin{eqnarray*}
\hat{K_\eta}(t)= \left\{ \begin{array}{ll}
1 & \textrm{if $|t|\leq1,$}\\
\cos^2\Big(\frac{\pi(|t|-1)}{2\eta}\Big) & \textrm{if $1\leq|t|\leq1+\eta,$} \\
0 & \textrm{if $|t|\geq1+\eta$}.
\end{array} \right. 
\end{eqnarray*}
The kernel $K_\eta$ is even and satisfies the following properties: $K_\eta(x),\, K_\eta'(x)\rightarrow0$ as $x\rightarrow\infty$, and [\textbf{\ref{C}}]
\begin{eqnarray}\label{bound3}
K_\eta''(x)\ll\min\big\{1,\eta^{-3}|x|^{-3}\big\}.
\end{eqnarray}
Integrating by parts twice, we have
\[
\hat{K_\eta}(t)=\int_{0}^{\infty}K_\eta''(x)\bigg(\frac{\sin\pi tx}{\pi t}\bigg)^2dx.
\]
This implies that
\begin{eqnarray*}
\int_{0}^{\infty}r(t)\hat{K_\eta}\Big(\frac{t}{T}\Big)dt&=&\pi^{-2}T^2\int_{0}^{\infty}K_\eta''(x)R\Big(\frac{\pi x}{T}\Big)dx\\
&=&\pi^{-2}T^2\bigg(\int_{0}^{T_1}K_\eta''R+\int_{T_1}^{T_2}K_\eta''R+\int_{T_2}^{\infty}K_\eta''R\bigg),
\end{eqnarray*}
where $T_1=T^{-c_1}$ and $T_2=T^{c_2}$. From \eqref{bound2} and \eqref{bound3} we have
\[
\int_{0}^{T_1}K_\eta''R\ll_\varepsilon \int_{0}^{T_1}(x/T)^{1-\varepsilon}dx\ll_\varepsilon  T^{-(1+2c_1)+\varepsilon}
\]
and
\[
\int_{T_2}^{\infty}K_\eta''R\ll_\varepsilon \int_{T_2}^{\infty}\eta^{-3}x^{-3}(x/T)^{1-\varepsilon}dx\ll_\varepsilon \eta^{-3}T^{-(1+c_2)+\varepsilon}.
\]
Furthermore, \eqref{bound1} and \eqref{bound3} lead to
\[
\int_{T_1}^{T_2}K_\eta''R\ll \int_{T_1}^{T_2}\min\big\{1,\eta^{-3}x^{-3}\big\}(x/T)^{1+c}g(T)dx\ll \eta^{-(2+c)}T^{-(1+c)}g(T).
\]
So
\[
\int_{0}^{\infty}r(t)\hat{K_\eta}\Big(\frac{t}{T}\Big)dt\ll_\varepsilon T^{1-2c_1+\varepsilon}+\eta^{-3}T^{1-c_2+\varepsilon}+\eta^{-(2+c)}T^{1-c}g(T).
\]
Hence
\begin{eqnarray*}
\int_{-\infty}^{\infty}f(t)\hat{K_\eta}\Big(\frac{t}{T}\Big)dt&=&\int_{0}^{\infty}\big(\log t+B-1+\gamma_0+\log 2\big)\hat{K_\eta}\Big(\frac{t}{T}\Big)dt\\
&&\qquad\qquad+O_\varepsilon\big( T^{1-2c_1+\varepsilon}\big)+O_\varepsilon\big(\eta^{-3}T^{1-c_2+\varepsilon})+O\big(\eta^{-(2+c)}T^{1-c}g(T)\big)\\
&=&\int_{0}^{T}\big(\log t+B-1+\gamma_0+\log 2\big)dt+O\bigg(\int_{T}^{(1+\eta)T}\log t dt\bigg)\\
&&\qquad\qquad+O_\varepsilon\big( T^{1-2c_1+\varepsilon}\big)+O_\varepsilon\big(\eta^{-3}T^{1-c_2+\varepsilon})+O\big(\eta^{-(2+c)}T^{1-c}g(T)\big)\\
&=&T(\log T+B-2+\gamma_0+\log 2)+O_\varepsilon\big(\eta T^{1+\varepsilon}\big)\\
&&\qquad\qquad+O_\varepsilon\big( T^{1-2c_1+\varepsilon}\big)+O_\varepsilon\big(\eta^{-3}T^{1-c_2+\varepsilon})+O\big(\eta^{-(2+c)}T^{1-c}g(T)\big),
\end{eqnarray*}
and we obtain the lemma.
\end{proof}

\begin{lemma}\label{taub}
Suppose $f$ is a non-negative function. If
\[
\int_{-\infty}^{\infty}f(T+y)e^{-2|y|}dy=1+O\big(e^{-cY}\big)
\]
for $Y\leq T\leq Y+\log 2$ for some $c>0$, then
\[
\int_{0}^{\log 2}f(Y+y)e^{2y}dy=\frac{3}{2}+O\big(e^{-cY/2}\big).
\]
\end{lemma}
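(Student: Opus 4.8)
The idea is to replace the ``smoothing against $e^{-2|y|}$'' in the hypothesis by a local statement about the boundary values of the smoothed function, and then read off the constant $\tfrac32$; this is a Tauberian-type de-convolution in which positivity of $f$ plays the decisive role. Set
\[
G(T)=\int_{-\infty}^{\infty}f(T+y)e^{-2|y|}\,dy ,
\]
so the hypothesis says $G(T)=1+O(e^{-cY})$ for $Y\le T\le Y+\log 2$. The kernel $k(y)=e^{-2|y|}$ is continuous and satisfies $k''=4k-4\delta_0$ in the distributional sense (the $-4\delta_0$ is the jump of $k'$ at $0$), so $G=f*k$ lies in $C^1$ and $G''=4G-4f$ almost everywhere; equivalently $f=G-\tfrac14 G''$.

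First I would substitute this into the integral to be evaluated and integrate by parts: writing
\[
\int_{0}^{\log 2}f(Y+y)e^{2y}\,dy=\int_{Y}^{Y+\log 2}\Bigl(G(u)-\tfrac14 G''(u)\Bigr)e^{2(u-Y)}\,du
\]
and integrating the $G''$-term by parts twice, the two bulk $G$-integrals cancel and one is left with the exact identity
\[
\int_{0}^{\log 2}f(Y+y)e^{2y}\,dy=2G(Y+\log 2)-\tfrac12 G(Y)-G'(Y+\log 2)+\tfrac14 G'(Y)
\]
(with $f\equiv1$, so $G\equiv1$, the right side is $2-\tfrac12=\tfrac32$, as it must be). By the hypothesis the first two terms are $\tfrac32+O(e^{-cY})$, so the whole problem reduces to showing that the boundary derivatives are negligible, $G'(Y),G'(Y+\log 2)=O(e^{-cY/2})$.

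This last point is where non-negativity of $f$ is essential, and it is the step I expect to be the main obstacle. From $f\ge0$ one gets $G''=4G-4f\le 4G\le 4+O(e^{-cY})$ on $[Y,Y+\log 2]$, and also the local mass bound $\int_{t}^{t+r}f\le e^{r}G\!\bigl(t+\tfrac r2\bigr)$, which controls the negative part of $G''$ over short subintervals. Taylor-expanding $G$ about each endpoint with an increment $r$ and using $|G-1|=O(e^{-cY})$ on $[Y,Y+\log 2]$ bounds one side of each derivative after optimising with $r\asymp e^{-cY/2}$ (the square root is the usual loss when deducing a derivative bound from a sup-norm bound); the complementary one-sided bounds are obtained the same way on the other side of each endpoint, where the near-constancy of $G$ is propagated by the elementary Lipschitz-type inequality $e^{-2|s|}G(T)\le G(T+s)\le e^{2|s|}G(T)$, immediate from $|u-T-s|\ge|u-T|-|s|$ in the defining integral. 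Inserting $G'(Y),G'(Y+\log 2)=O(e^{-cY/2})$ and $G(Y),G(Y+\log2)=1+O(e^{-cY})$ into the displayed identity yields $\int_{0}^{\log 2}f(Y+y)e^{2y}\,dy=\tfrac32+O(e^{-cY/2})$, as claimed.
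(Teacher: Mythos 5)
Your reduction is correct and, as far as it goes, verifiable: with $G=f*k$, $k(y)=e^{-2|y|}$, one indeed has $f=G-\tfrac14G''$ distributionally, the double integration by parts does cancel the bulk terms, and the identity
\[
\int_{0}^{\log 2}f(Y+y)e^{2y}\,dy=2G(Y+\log 2)-\tfrac12 G(Y)-G'(Y+\log 2)+\tfrac14 G'(Y)
\]
checks out (e.g.\ against $f\equiv1$). The one-sided bounds you can actually prove from inside the interval, namely $G'(Y)\ge -O(e^{-cY/2})$ and $G'(Y+\log 2)\le O(e^{-cY/2})$, do follow from $G''\le 4G=O(1)$ on $[Y,Y+\log2]$ together with $G=1+O(e^{-cY})$ there, and they already give the lower bound $\ge\tfrac32-O(e^{-cY/2})$. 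The gap is in the two complementary bounds, and it is not a repairable technicality of your tools. The local mass bound $\int_t^{t+r}f\le e^{r}G(t+r/2)$ gives only $\int_t^{t+r}f=O(1)$, not $O(r)$, so the negative part of $G''$ contributes $O(r)$ to the Taylor remainder and hence $O(1)$ to $G'$; and the inequality $G(T-r)\ge e^{-2r}G(T)$ gives only $G(T)-G(T-r)\le(1-e^{-2r})G(T)=O(r)$, hence $G'(Y)\le 2+O(r)+O(e^{-cY}/r)$ --- a bound of size $2+o(1)$, not $o(1)$.

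This failure is forced by the hypotheses as stated: take $f=1+\tfrac{1}{2w}$ on $[Y,Y+w]$, $f=1$ on $(Y+w,\infty)$, $f=0$ on $(-\infty,Y)$, with $w=e^{-cY}$. Then $G(T)=1+O(e^{-cY})$ for all $Y\le T\le Y+\log2$ (mass sitting in $[Y,Y+w]$ is indistinguishable, for observation points in the interval, from mass just to the left of $Y$), yet $G'(Y)=2+O(w)$ and $\int_0^{\log2}f(Y+y)e^{2y}\,dy=2+O(e^{-cY})$. So the bound $G'(Y)=O(e^{-cY/2})$ you need is simply false here, and the lemma in the literal form quoted cannot be proved without extra input; the paper itself offers no proof but only cites Lemma 1 of [LPZ], whose hypotheses (and the application in Section 5, where the smoothed average is controlled for a whole range of $Y$) supply exactly the missing ingredient: control of $G$ on a two-sided neighbourhood of each endpoint. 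If you add the hypothesis for $Y-e^{-cY/2}\le T\le Y+\log2+e^{-cY/2}$, your Taylor expansion with increment $r\asymp e^{-cY/2}$, applied from \emph{outside} each endpoint and using only $G''\le 4G=O(1)$ there, yields the two missing one-sided bounds and your argument closes.
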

\begin{proof}
This is a special case of Lemma 1 of [\textbf{\ref{LPZ}}].
\end{proof}

\begin{lemma} \label{SVbound}
Assume GRH. We have
\begin{equation}\label{SV1}
\int_{1}^{X}\Big |\psi_F(x+\delta x)-\psi_F(x) - m_F\delta x\Big|^{2} dx\ll \delta X^2 \Big(\log \frac{2}{\delta} \Big)^{2}
\end{equation}
for $0<\delta\leq 1$, and
\begin{equation}\label{SV2}
\int_{1}^{X}\Big |\psi_F(x+h)-\psi_F(x) - m_Fh\Big|^{2} dx\ll hX \Big(\log \frac{2 X}{h} \Big)^{2} 
\end{equation}
for $0 < h \leq X$.
\end{lemma}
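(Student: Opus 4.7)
The plan is to adapt Selberg's classical mean-square estimate for short sums of $\Lambda(n)$ to the Selberg class, via the truncated explicit formula for $\psi_F$ under GRH. The key analytic inputs are the functional equation for $F$, the Riemann--von Mangoldt-type formula $N_F(T) = \frac{d_F}{2\pi}T\log T + O_F(\log T)$, and the local zero-density bound $N_F(T+1)-N_F(T) \ll_F \log T$, all of which are standard consequences of the axioms defining $\mathcal{S}$.

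For \eqref{SV2}, under GRH a contour shift of the Perron integral gives
\[
\psi_F(x) \;=\; m_F x \,-\, \sum_{|\gamma_F|\le T}\frac{x^{1/2+i\gamma_F}}{\tfrac12 + i\gamma_F} \,+\, O_F\!\left(\frac{x(\log xT)^2}{T}+\log x\right), \qquad 2\le T\le x,
\]
so that, subtracting at $x$ and $x+h$,
\[
\psi_F(x+h)-\psi_F(x)-m_F h \;=\; -\!\sum_{|\gamma_F|\le T}\!\frac{(x+h)^{1/2+i\gamma_F}-x^{1/2+i\gamma_F}}{\tfrac12+i\gamma_F} \,+\, E_T(x,h).
\]
I would choose $T$ comparable to $X/h$ in the main range so that the contribution of $E_T$ to the variance is negligible; expanding the square and integrating over $x\in[1,X]$ then splits the integral into a diagonal and an off-diagonal part over pairs of zeros. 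The diagonal is controlled by the elementary estimate
\[
\bigl|(x+h)^{1/2+i\gamma}-x^{1/2+i\gamma}\bigr|^2 \;\ll\; \min\!\Bigl(\tfrac{(1+\gamma^2)h^2}{x},\; x\Bigr),
\]
which, summed against $dN_F(t)\ll_F \log t\,dt$ and integrated in $x$, yields $\ll hX\bigl(\log(2X/h)\bigr)^2$, matching the claim.

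The main obstacle is the off-diagonal contribution from pairs $\gamma_F\ne\gamma_F'$. Here I would apply the Montgomery--Vaughan Hilbert-type inequality to the oscillatory integrals $\int_1^X x^{i(\gamma_F-\gamma_F')}\,dx \ll \min\!\bigl(X,|\gamma_F-\gamma_F'|^{-1}\bigr)$, exploiting the Selberg-class zero spacing above to show that the off-diagonal is absorbed by the diagonal. The very small-$h$ regime, where the explicit-formula truncation error dominates, is handled separately using the trivial bound $\psi_F(x+h)-\psi_F(x)\ll \log X$ on the measure-$O(Xh)$ set of $x$ for which $(x,x+h]$ contains an integer; this gives $\ll Xh\log^2 X \ll hX\log^2(2X/h)$.

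For \eqref{SV1}, an essentially parallel argument works with $h$ replaced by $\delta x$. The zero-variable part separates cleanly because
\[
(x(1+\delta))^{1/2+i\gamma}-x^{1/2+i\gamma} \;=\; x^{1/2+i\gamma}\bigl((1+\delta)^{1/2+i\gamma}-1\bigr),
\]
so the pointwise estimate simplifies to
\[
\bigl|(1+\delta)^{1/2+i\gamma}-1\bigr|^2 \;\ll\; \min\!\bigl((1+\gamma^2)\delta^2,\;1\bigr).
\]
Choosing $T\asymp \delta^{-1}$ and running the same diagonal/off-diagonal analysis then produces $V_F(X,\delta)\ll \delta X^2\bigl(\log(2/\delta)\bigr)^2$, completing the lemma. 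Throughout, the novelty relative to the $\zeta$-case is only bookkeeping: the Riemann--von Mangoldt formula acquires a factor of $d_F$, but both the Hilbert-type off-diagonal estimate and the pointwise bounds on $(x+h)^\rho-x^\rho$ are insensitive to this rescaling.
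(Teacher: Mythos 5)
Your argument is essentially the one the paper invokes: its proof of Lemma \ref{SVbound} is just a citation to Saffari and Vaughan, whose method is exactly the truncated explicit formula plus a mean--value estimate for the Dirichlet polynomial over the zeros, with the pointwise bounds on $a(\rho)=((1+\delta)^{\rho}-1)/\rho$ (resp.\ $((x+h)^{\rho}-x^{\rho})/\rho$) and the local zero count $N_F(t+1)-N_F(t)\ll\log(|t|+2)$ supplying the $\delta\log^{2}(2/\delta)$ and $h\log^{2}(2X/h)$ coefficient sums. The one step to execute carefully is the off--diagonal: taking absolute values in $\int x^{i(\gamma_F-\gamma_F')}dx$ and then applying AM--GM costs an extra $\log^{2}Z$ (which ruins the bound when $\delta\asymp1$ or $h\asymp X$), and Montgomery--Vaughan in its raw form needs a lower bound on the spacing of the $\gamma_F$ that is unavailable, so one should instead group the ordinates into unit intervals (Gallagher's lemma) and apply Cauchy--Schwarz with $N_F(t+1)-N_F(t)\ll\log(|t|+2)$ there --- which is precisely the input you already identified.
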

\begin{proof}
The argument is identical to that of Saffari and Vaughan in [\textbf{\ref{SV}}]. 
\end{proof}

\section{Ratios conjecture for $L$-functions in the Selberg class}\label{Ratios}

We would like to study
\[
R_F(\alpha,\beta,\gamma,\delta)=\int_{-T}^{T}\frac{F(s+\alpha)\overline{F}(1-s+\beta)}{F(s+\gamma)\overline{F}(1-s+\delta)}dt,
\]
where $s=1/2+it$, using the recipe in [\textbf{\ref{CFKRS}, \ref{CFZ}}]. The shifts are constrained as follows:
\begin{eqnarray}\label{constraints}
&& \big|\textrm{Re}(\alpha)\big|, \big|\textrm{Re}(\beta)\big| < \frac{1}{4},\nonumber\\
&& (\log T)^{-1}\ll \textrm{Re}(\gamma), \textrm{Re}(\delta)< \frac{1}{4}\\
&& \textrm{Im}(\alpha), \textrm{Im}(\beta), \textrm{Im}(\gamma), \textrm{Im}(\delta) \ll_\varepsilon T^{1-\varepsilon}.\nonumber
\end{eqnarray}

We use the approximate functional equation for the $L$-functions in the numerator,
\[
F(s)=\sum_{n}\frac{a_F(n)}{n^s}+X(s)\sum_{n}\frac{\overline{a_F}(n)}{n^{1-s}},
\]
and the normal Dirichlet series expansion for those in the denominator,
\[
F(s)^{-1}=\sum_{n}\frac{\mu_F(n)}{n^s}.
\]
As we integrate term-by-term, only the pieces with the same number of $X(s)$ as $\overline{X}(1-s)$ contribute to the main terms.

The terms from the first part of each approximate functional equation yield
\[
2T\sum_{hm=kn}\frac{a_F(m)\overline{a_F}(n)\mu_F(h)\overline{\mu_F}(k)}{m^{1/2+\alpha}n^{1/2+\beta}h^{1/2+\gamma}k^{1/2+\delta}}=2T\prod_p\bigg(\sum_{h+m=k+n}\frac{a_F(p^m)\overline{a_F}(p^n)\mu_F(p^h)\overline{\mu_F}(p^k)}{p^{(1/2+\alpha)m+(1/2+\beta)n+(1/2+\gamma)h+(1/2+\delta)k}}\bigg).
\]
We note that the functions $a_F(n)$, $\mu_F(n)$ are multiplicative because of the existence of the Euler product \eqref{Euler}, and
\[
b_F(p)=a_F(p)=-\mu_F(p).
\]
Hence the above expression is
\[
2TA_F(\alpha,\beta,\gamma,\delta)\frac{(F\otimes \overline{F})(1+\alpha+\beta)(F\otimes \overline{F})(1+\gamma+\delta)}{(F\otimes \overline{F})(1+\alpha+\delta)(F\otimes \overline{F})(1+\beta+\gamma)},
\]
where $A_F(\alpha,\beta,\gamma,\delta)$ is an arithmetical factor given by some Euler product that is absolutely and uniformly convergent in some product of fixed half-planes containing the origin,
\begin{eqnarray}\label{A}
A_F(\alpha,\beta,\gamma,\delta)&=&\prod_{p}\bigg(\sum_{h+m=k+n}\frac{a_F(p^m)\overline{a_F}(p^n)\mu_F(p^h)\overline{\mu_F}(p^k)}{p^{(1/2+\alpha)m+(1/2+\beta)n+(1/2+\gamma)h+(1/2+\delta)k}}\bigg)\\
&&\qquad\textrm{exp}\Bigg(\sum_{l=1}^{\infty}l\big|b_F(p^l)\big|^2\bigg(\frac{1}{p^{l(1+\alpha+\delta)}}+\frac{1}{p^{l(1+\beta+\gamma)}}-\frac{1}{p^{l(1+\alpha+\beta)}}-\frac{1}{p^{l(1+\gamma+\delta)}}\bigg)\Bigg).\nonumber
\end{eqnarray}
Here for any $F,G\in\mathcal{S}$, we define the tensor product $F\otimes G$ as in [\textbf{\ref{N}}]
\[
(F\otimes G)(s)=\prod_{p}\textrm{exp}\bigg(\sum_{l=1}^{\infty}\frac{lb_F(p^l)b_G(p^l)}{p^{ls}}\bigg).
\]

The contribution of the terms coming from the second part of each approximate functional equation is similar to the first piece except that $\alpha$ is replaced by $-\beta$, and $\beta$ is replaced by $-\alpha$. Also, because of the factor $X(s)$, we have an extra factor of
\[
X(s+\alpha)\overline{X}(1-s+\beta)=\Big(\frac{\mathfrak{q}_F(|t|+2)^{d_F}}{(2\pi)^{d_F}}\Big)^{-(\alpha+\beta)}\bigg(1+O\Big(\frac{1}{|t|+2}\Big)\bigg).
\]
Thus the recipe leads to the following ratios conjecture:

\begin{conjecture}\label{RCF}
With $\alpha,\beta,\gamma$ and $\delta$ satisfying \eqref{constraints} we have
\begin{eqnarray*}
&&\!\!\!\!\!\!\!\! R_F(\alpha,\beta,\gamma,\delta)=\int_{-T}^{T}\bigg(A_F(\alpha,\beta,\gamma,\delta)\frac{(F\otimes \overline{F})(1+\alpha+\beta)(F\otimes \overline{F})(1+\gamma+\delta)}{(F\otimes \overline{F})(1+\alpha+\delta)(F\otimes \overline{F})(1+\beta+\gamma)}\\
&&\!\!\!\!\!\!\!\!\qquad\qquad+\Big(\frac{\mathfrak{q}_F(|t|+2)^{d_F}}{(2\pi)^{d_F}}\Big)^{-(\alpha+\beta)}A_F(-\beta,-\alpha,\gamma,\delta)\frac{(F\otimes \overline{F})(1-\alpha-\beta)(F\otimes \overline{F})(1+\gamma+\delta)}{(F\otimes \overline{F})(1-\alpha+\gamma)(F\otimes \overline{F})(1-\beta+\delta)}\bigg)dt\\
&&\qquad\qquad\qquad\qquad+O_\varepsilon(T^{1/2+\varepsilon}),
\end{eqnarray*}
where $A_F(\alpha,\beta,\gamma,\delta)$ is defined as in \eqref{A}.
\end{conjecture}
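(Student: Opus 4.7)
My plan is to derive Conjecture \ref{RCF} by applying the recipe of [\textbf{\ref{CFKRS}}] in the ratios variant of [\textbf{\ref{CFZ}}] to the integrand $F(s+\alpha)\overline{F}(1-s+\beta)/F(s+\gamma)\overline{F}(1-s+\delta)$. The first step is to expand each numerator factor via the approximate functional equation
\[
F(s) = \sum_{n} \frac{a_F(n)}{n^s} + X(s) \sum_{n} \frac{\overline{a_F}(n)}{n^{1-s}},
\]
and each denominator factor via the Dirichlet series $F(s)^{-1} = \sum_n \mu_F(n) n^{-s}$. Multiplying out, the integrand decomposes into four pieces according to which side of each approximate functional equation one selects.

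By the recipe prescription, only those pieces in which the number of $X(s+\alpha)$ factors matches the number of $\overline{X}(1-s+\beta)$ factors are kept; the two "unbalanced" pieces oscillate rapidly in $t$ under the weight $(|t|+2)^{\pm(\alpha+\beta)}$ coming from Stirling applied to $X$, and are expected (but not proved) to contribute only to the error $O_\varepsilon(T^{1/2+\varepsilon})$. Of the two surviving pieces, the purely Dirichlet-series one leads, after integration in $t$ over $[-T,T]$, to
\[
2T \sum_{hm=kn} \frac{a_F(m)\overline{a_F}(n)\mu_F(h)\overline{\mu_F}(k)}{m^{1/2+\alpha} n^{1/2+\beta} h^{1/2+\gamma} k^{1/2+\delta}},
\]
which, by multiplicativity of $a_F$ and $\mu_F$ (a consequence of the Euler product \eqref{Euler}), factors as an Euler product over primes.

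The next step is to isolate the analytic skeleton of this Euler product. Using $b_F(p) = a_F(p) = -\mu_F(p)$ together with the definition of the tensor product, the polar data in the four shifts $\alpha+\beta$, $\gamma+\delta$, $\alpha+\delta$, $\beta+\gamma$ match exactly those of the four $(F\otimes\overline{F})(1+\cdot)$ factors appearing in the statement. Factoring these out and compensating with the second exponential in \eqref{A} leaves the arithmetic factor $A_F(\alpha,\beta,\gamma,\delta)$, whose Euler product now converges absolutely in a product of fixed half-planes around the origin. The swapped main piece is handled identically, but with the CFZ substitution $\alpha \mapsto -\beta$, $\beta \mapsto -\alpha$ in the arithmetic factor; it also inherits $X(s+\alpha)\overline{X}(1-s+\beta)$, and by Stirling this factor is $\big(\mathfrak{q}_F(|t|+2)^{d_F}/(2\pi)^{d_F}\big)^{-(\alpha+\beta)}\big(1+O((|t|+2)^{-1})\big)$, with the lower-order term being harmless at the level of the $T^{1/2+\varepsilon}$ error.

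The principal obstacle is that the recipe itself is heuristic: the rigorous suppression of the unbalanced swap combinations and the sharp error term $O_\varepsilon(T^{1/2+\varepsilon})$ are motivated by random-matrix and moment-conjecture heuristics rather than deduced from prior results. Because of this, the statement is legitimately labelled a conjecture; conditional on the standard recipe suppression, the remainder of the argument is bookkeeping with Euler products, Rankin--Selberg-type convolutions and the functional equation.
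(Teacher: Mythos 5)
Your derivation follows the same route as the paper: approximate functional equations in the numerator, Dirichlet series in the denominator, retention of only the balanced swap terms, factoring the diagonal Euler product into the four $(F\otimes\overline{F})(1+\cdot)$ pieces times the arithmetic factor $A_F$, and the swapped term with $\alpha\mapsto-\beta$, $\beta\mapsto-\alpha$ together with the Stirling evaluation of $X(s+\alpha)\overline{X}(1-s+\beta)$. This matches the paper's application of the CFKRS/CFZ recipe, including the (correctly flagged) heuristic status of discarding the unbalanced terms into the $O_\varepsilon(T^{1/2+\varepsilon})$ error.
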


We next investigate the analytic properties of $(F\otimes \overline{F})(s)$ at $s=1$. We have
\begin{eqnarray}\label{FxF}
\frac{(F\otimes \overline{F})'}{(F\otimes \overline{F})}(s)&=&-\sum_p\sum_{l=1}^{\infty}\frac{l^2|b_F(p^l)|^2(\log p)}{p^{ls}}=-\sum_p\frac{|b_F(p)|^2(\log p)}{p^{s}}+O(1)\nonumber\\
&=&-\sum_p\frac{|a_F(p)|^2(\log p)}{p^{s}}+O(1),
\end{eqnarray}
provided that $\textrm{Re}(s)>\frac{1}{2}$. Let 
\[
S(x)=\sum_{p\leq x}\frac{|a_F(p)|^2}{p}.
\]
The Selberg Orthogonality Conjecture says that
\[
S(x)=\log\log x+O(1).
\]
So for $\sigma_0>0$ and $|\sigma-\sigma_0|\leq\sigma_0/2$ ($\sigma\in\mathbb{C}$), partial summation gives
\[
\sum_{p\leq x}\frac{|a_F(p)|^2}{p^{1+\sigma}}=O\bigg(\frac{\log\log x}{x^{\textrm{Re}(\sigma)}}\bigg)+\sigma\int_{1}^{x}\frac{S(t)}{t^{\sigma+1}}dt=O\bigg(\frac{\log\log x}{x^{\textrm{Re}(\sigma)}}\bigg)+O(1)+\sigma\int_{1}^{x}\frac{\log\log t}{t^{\sigma+1}}dt.
\]
Taking $x\rightarrow\infty$ we obtain
\[
\sum_{p}\frac{|a_F(p)|^2}{p^{1+\sigma}}=O(1)+\sigma\int_{1}^{\infty}\frac{\log\log t}{t^{\sigma+1}}dt=O(1)-(\gamma_0+\log \sigma)=O(1)-\log \sigma.
\]
Hence using Cauchy's theorem we get
\[
\sum_{p}\frac{|a_F(p)|^2(\log p)}{p^{1+\sigma_0}}=\frac{1}{\sigma_0}+O(1).
\]
It follows from \eqref{FxF} that $(F\otimes \overline{F})(s)$ has a simple pole at $s=1$.

Note that for a function $f(u,v)$ analytic at $(u,v)=(\alpha,\alpha)$, a simple calculation shows that
\[
\frac{d}{d\alpha}\frac{f(\alpha,\gamma)}{(F\otimes \overline{F})(1-\alpha+\gamma)}\bigg|_{\gamma=\alpha}=\frac{f(\alpha,\alpha)}{r_{F\otimes \overline{F}}},
\]
where $r_{F\otimes \overline{F}}$ is the residue of $(F\otimes \overline{F})$ at $s=1$. It is also easy to verify that $A_F(\alpha,\beta,\alpha,\beta)=1$. So taking the derivatives of the expressions in Conjecture \ref{RCF} with respect to $\alpha$, $\beta$ and setting $\gamma=\alpha$, $\delta=\beta$ we have

\begin{conjecture}\label{RCd}
With $\alpha$ and $\beta$ satisfying \eqref{constraints} we have
\begin{eqnarray*}
&&\!\!\!\!\!\!\!\!\int_{-T}^{T}\frac{F'}{F}(s+\alpha)\frac{\overline{F}'}{\overline{F}}(1-s+\beta)dt=\int_{-T}^{T}\Bigg(\bigg(\frac{(F\otimes \overline{F})'}{(F\otimes \overline{F})}\bigg)'(1+\alpha+\beta)\\
&&\!\!\!\!\!\!\!\!\qquad\qquad+\frac{1}{r_{F\otimes \overline{F}}^2}\Big(\frac{\mathfrak{q}_F(|t|+2)^{d_F}}{(2\pi)^{d_F}}\Big)^{-(\alpha+\beta)}A_F(-\beta,-\alpha,\alpha,\beta)(F\otimes \overline{F})(1-\alpha-\beta)(F\otimes \overline{F})(1+\alpha+\beta)\\
&&\!\!\!\!\!\!\!\!\qquad\qquad\qquad\qquad+\frac{\partial^2}{d\alpha d\beta}A_F(\alpha,\beta,\gamma,\delta)\bigg|_{\gamma=\alpha,\delta=\beta}\ \Bigg)dt+O_\varepsilon(T^{1/2+\varepsilon}),
\end{eqnarray*}
where $A_F(\alpha,\beta,\gamma,\delta)$ is defined as in \eqref{A}.
\end{conjecture}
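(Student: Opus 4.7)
The plan is to derive Conjecture \ref{RCd} from Conjecture \ref{RCF} by applying the operator $\partial^2/(\partial\alpha\,\partial\beta)$ and then setting $\gamma=\alpha$, $\delta=\beta$. The left-hand side transforms cleanly: by the product rule,
\[
\frac{\partial^2}{\partial\alpha\,\partial\beta}\frac{F(s+\alpha)\overline{F}(1-s+\beta)}{F(s+\gamma)\overline{F}(1-s+\delta)}\bigg|_{\gamma=\alpha,\delta=\beta}=\frac{F'}{F}(s+\alpha)\frac{\overline{F}'}{\overline{F}}(1-s+\beta),
\]
and since the integrand is analytic in the shifts on the fixed neighbourhood specified by \eqref{constraints} and uniformly bounded in $t$, differentiation commutes with the $t$-integration. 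The error term $O_\varepsilon(T^{1/2+\varepsilon})$ survives these derivatives by Cauchy's integral formula on a polydisc compatible with \eqref{constraints}, any polylogarithmic loss being absorbed into $T^\varepsilon$.

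The first term on the right of Conjecture \ref{RCF} is straightforward to analyse. At the diagonal, $A_F(\alpha,\beta,\alpha,\beta)=1$ and the $(F\otimes\overline{F})$-ratio
\[
R_1=\frac{(F\otimes\overline{F})(1+\alpha+\beta)(F\otimes\overline{F})(1+\gamma+\delta)}{(F\otimes\overline{F})(1+\alpha+\delta)(F\otimes\overline{F})(1+\beta+\gamma)}
\]
also equals $1$, while a short computation with logarithmic derivatives shows that $\partial_\alpha R_1$ and $\partial_\beta R_1$ vanish there (each reduces to a difference of two equal copies of $(F\otimes\overline{F})'/(F\otimes\overline{F})$ at $1+\alpha+\beta$). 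The Leibniz rule then collapses $\partial_\alpha\partial_\beta(A_F R_1)|_{\gamma=\alpha,\delta=\beta}$ to $(\partial_\alpha\partial_\beta A_F)|_{\gamma=\alpha,\delta=\beta}+\big((F\otimes\overline{F})'/(F\otimes\overline{F})\big)'(1+\alpha+\beta)$, which is exactly the first-plus-third contribution in Conjecture \ref{RCd}.

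The second term contains the simple poles of $(F\otimes\overline{F})(1-\alpha+\gamma)$ and $(F\otimes\overline{F})(1-\beta+\delta)$ at the diagonal, and this is the point where I expect most care to be required. Using the Laurent expansion $(F\otimes\overline{F})(1+\epsilon)=r_{F\otimes\overline{F}}/\epsilon+O(1)$, the corresponding ratio
\[
R_2=\frac{(F\otimes\overline{F})(1-\alpha-\beta)(F\otimes\overline{F})(1+\gamma+\delta)}{(F\otimes\overline{F})(1-\alpha+\gamma)(F\otimes\overline{F})(1-\beta+\delta)}
\]
vanishes to order $(\gamma-\alpha)(\delta-\beta)$ at the diagonal; its single derivatives therefore vanish there too, and only $\partial_\alpha\partial_\beta R_2|_{\gamma=\alpha,\delta=\beta}=(F\otimes\overline{F})(1-\alpha-\beta)(F\otimes\overline{F})(1+\alpha+\beta)/r_{F\otimes\overline{F}}^2$ survives. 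This is the iterated form of the identity $\frac{d}{d\gamma}[f(\alpha,\gamma)/(F\otimes\overline{F})(1-\alpha+\gamma)]|_{\gamma=\alpha}=f(\alpha,\alpha)/r_{F\otimes\overline{F}}$ recorded just before the conjecture, applied once in $\alpha$ (through the $\gamma$-pole) and once in $\beta$ (through the $\delta$-pole). Multiplying by the remaining regular factors $(\mathfrak{q}_F(|t|+2)^{d_F}/(2\pi)^{d_F})^{-(\alpha+\beta)}A_F(-\beta,-\alpha,\alpha,\beta)$ then yields precisely the middle term of Conjecture \ref{RCd}, completing the derivation.
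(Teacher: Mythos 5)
Your derivation is correct and is exactly the route the paper takes: differentiate Conjecture \ref{RCF} in $\alpha$ and $\beta$ and set $\gamma=\alpha$, $\delta=\beta$, using $A_F(\alpha,\beta,\alpha,\beta)=1$ and the simple pole of $(F\otimes\overline{F})$ at $1$ so that in the swapped term only the $u'v'$ cross term survives, producing the $r_{F\otimes\overline{F}}^{-2}$ factor. Your write-up in fact supplies more detail than the paper (which compresses the whole computation into the single identity $\frac{d}{d\alpha}[f(\alpha,\gamma)/(F\otimes\overline{F})(1-\alpha+\gamma)]|_{\gamma=\alpha}=f(\alpha,\alpha)/r_{F\otimes\overline{F}}$ plus one sentence), including the justification that the error term survives differentiation.
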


\section{Pair correlation of zeros of $L$-functions in the Selberg class}\label{Paircorr}

\subsection{The pair correlation function}

Let $F\in\mathcal{S}$. We want to evaluate the sum
\[
S(F)=\sum_{-T\leq \gamma_F,\gamma'_F\leq T}h(\gamma_F-\gamma'_F).
\] 

We follow the approach in [\textbf{\ref{CS}}] and compute this using contour integrals. Let $1/2<a<1$ and $\mathcal{C}$ be the positively oriented rectangle with vertices at $1-a-iT$, $a-iT$, $a+iT$ and $1-a+iT$. Then
\[
S(F)=\frac{1}{(2\pi i)^2}\int_{\mathcal{C}}\int_{\mathcal{C}}\frac{F'}{F}(u)\frac{F'}{F}(v)h\big(-i(u-v)\big)dudv.
\]
The horizontal contributions are small and can be ignored. We denote 
\[
S(F)=I_1+I_2+2I_3+O_\varepsilon(T^\varepsilon),
\] 
where $I_1$ has vertical parts $a$ and $a$, $I_2$ has vertical parts $1-a$ and $1-a$, and $I_3$ has vertical parts $a$ and $1-a$.

Using GRH and moving the contours to the right of $1$ we have $I_1=O_\varepsilon(T^\varepsilon)$.

For $I_2$ we use the functional equation 
\begin{equation}\label{fe}
\frac{F'}{F}(s)=\frac{X'}{X}(s)-\frac{\overline{F}'}{\overline{F}}(1-s).
\end{equation}
Here
\begin{eqnarray*}
\frac{X'}{X}(s)&=&-2\log Q-\sum_{j=1}^{r}\lambda_j\bigg(\frac{\Gamma'}{\Gamma}\big(\lambda_js+\mu_j\big)+\frac{\Gamma'}{\Gamma}\big(\lambda_j(1-s)+\overline{\mu_j}\big)\bigg)\\
&=&-\log\frac{ \mathfrak{q}_F(|t|+2)^{d_F}}{(2\pi)^{d_F}}+O\bigg(\frac{1}{|t|+2}\bigg).
\end{eqnarray*}
We apply \eqref{fe} to both $F'/F(u)$ and $F'/F(v)$. For the terms involving $\overline{F}'/\overline{F}(1-u)$ or $\overline{F}'/\overline{F}(1-v)$, we move the corresponding contour to the right of $1$, and as in the treatment for $I_1$, we get $O_\varepsilon(T^\varepsilon)$. For the term with $X'/X(u)$ and $X'/X(v)$, we move both contours to $\textrm{Re}(u)=\textrm{Re}(v)=\frac{1}{2}$. Again that introduces an error term of size $O_\varepsilon(T^\varepsilon)$. Hence
\begin{eqnarray*}
I_2&=&\frac{1}{(2\pi)^2}\int_{-T}^{T}\int_{-T}^{T}\frac{X'}{X}(\tfrac{1}{2}+iu)\frac{X'}{X}(\tfrac{1}{2}+iv)h(u-v)dudv+O_\varepsilon(T^\varepsilon)\\
&=&\frac{1}{(2\pi)^2}\int_{-T}^{T}\int_{-T}^{T}\log \frac{ \mathfrak{q}_F(|u|+2)^{d_F}}{(2\pi)^{d_F}}\log \frac{ \mathfrak{q}_F(|v|+2)^{d_F}}{(2\pi)^{d_F}}h(u-v)dudv+O_\varepsilon(T^\varepsilon)\\
&=&\frac{2}{(2\pi)^2}\int_{-T}^{T}\int_{v}^{T}\log \frac{ \mathfrak{q}_F(|u|+2)^{d_F}}{(2\pi)^{d_F}}\log\frac{ \mathfrak{q}_F(|v|+2)^{d_F}}{(2\pi)^{d_F}}h(u-v)dudv+O_\varepsilon(T^\varepsilon),
\end{eqnarray*}
as $h$ is even. Changing the variables $t=v$ and $\eta=u-v$ we get
\begin{eqnarray*}
I_2=\frac{2}{(2\pi)^2}\int_{0}^{2T}h(\eta)\int_{-T}^{T-\eta}\log \frac{ \mathfrak{q}_F(|t+\eta|+2)^{d_F}}{(2\pi)^{d_F}}\log \frac{ \mathfrak{q}_F(|t|+2)^{d_F}}{(2\pi)^{d_F}}dtd\eta+O_\varepsilon(T^\varepsilon).
\end{eqnarray*}
We can extend the inner integral to $t=T$ introducing an error term of size $\ll (\log T)^2\int_{0}^{2T}\eta h(\eta)d\eta\ll (\log T)^3$. The same argument shows that the term $\log \frac{ \mathfrak{q}_F(|t+\eta|+2)^{d_F}}{(2\pi)^{d_F}}$ can be replaced by $\log \frac{ \mathfrak{q}_F(|t|+2)^{d_F}}{(2\pi)^{d_F}}$ with the same error term. So
\begin{eqnarray*}
I_2&=&\frac{2}{(2\pi)^2}\int_{0}^{2T}h(\eta)\int_{-T}^{T}\bigg(\log \frac{ \mathfrak{q}_F(|t|+2)^{d_F}}{(2\pi)^{d_F}}\bigg)^2dtd\eta+O_\varepsilon(T^\varepsilon)\\
&=&\frac{1}{(2\pi)^2}\int_{-T}^{T}\int_{-2T}^{2T}h(\eta)\bigg(\log \frac{ \mathfrak{q}_F(|t|+2)^{d_F}}{(2\pi)^{d_F}}\bigg)^2d\eta dt+O_\varepsilon(T^\varepsilon).
\end{eqnarray*}

We next consider 
\[
I_3=-\frac{1}{(2\pi i)^2}\int_{a-iT}^{a+iT}\int_{1-a-iT}^{1-a+iT}\frac{F'}{F}(u)\frac{F'}{F}(v)h\big(-i(u-v)\big)dudv.
\]
Letting $u-v=i\eta$ we get
\begin{eqnarray*}
I_3=-\frac{1}{(2\pi )^2i}\int_{-2T-i(1-2a)}^{2T-i(1-2a)}h(\eta)\int_{a-iT_1}^{a+iT_2}\frac{F'}{F}(v)\frac{F'}{F}(v+i\eta)dvd\eta,
\end{eqnarray*} 
where 
\[
T_1=\textrm{min}\big\{T,T+\textrm{Re}(\eta)\big\}\quad\textrm{and}\quad T_2=\textrm{min}\big\{T,T-\textrm{Re}(\eta)\big\}.
\]
We now use the functional equation \eqref{fe} for $F'/F(v+i\eta)$. The term with $X'/X(v+i\eta)$ is $O_\varepsilon(T^\varepsilon)$ by moving the $v$-contour to the right of $1$. Thus,
\begin{eqnarray*}
I_3&=&\frac{1}{(2\pi )^2i}\int_{-2T-i(1-2a)}^{2T-i(1-2a)}h(\eta)\int_{a-iT_1}^{a+iT_2}\frac{F'}{F}(v)\frac{\overline{F}'}{\overline{F}}(1-v-i\eta)dvd\eta+O_\varepsilon(T^\varepsilon)\\
&=&\frac{1}{(2\pi )^2}\int_{-2T-i(1-2a)}^{2T-i(1-2a)}h(\eta)\int_{-T_1}^{T_2}\frac{F'}{F}\big(s+(a-\tfrac{1}{2})\big)\frac{\overline{F}'}{\overline{F}}\big(1-s+(\tfrac{1}{2}-a-i\eta)\big)dtd\eta+O_\varepsilon(T^\varepsilon),
\end{eqnarray*} 
where $s=1/2+it$.

In view of Conjecture \ref{RCd}, we have
\begin{eqnarray}\label{100}
I_3=\frac{1}{(2\pi )^2}\int_{-2T-i(1-2a)}^{2T-i(1-2a)}h(\eta)\int_{-T_1}^{T_2}g(-\eta,t)dtd\eta+O_\varepsilon(T^{1/2+\varepsilon}),
\end{eqnarray}
where
\begin{eqnarray*}
&&g(\eta,t)=\bigg(\frac{(F\otimes \overline{F})'}{(F\otimes \overline{F})}\bigg)'(1+i\eta)+\frac{1}{r_{F\otimes \overline{F}}^2}\Big(\frac{ \mathfrak{q}_F(|t|+2)^{d_F}}{(2\pi)^{d_F}}\Big)^{-i\eta}\\
&&\quad\qquad A_F\big(-\tfrac{1}{2}+a-i\eta,-a+\tfrac{1}{2},a-\tfrac{1}{2},\tfrac{1}{2}-a+i\eta\big)\\
&&\qquad\qquad\quad\quad(F\otimes \overline{F})(1-i\eta)(F\otimes \overline{F})(1+i\eta)+\frac{\partial^2}{d\alpha d\beta}A_F(\alpha,\beta,\gamma,\delta)\bigg|_{\gamma=\alpha=a-\tfrac{1}{2},\delta=\beta=\tfrac{1}{2}-a+i\eta}.
\end{eqnarray*}
A simple calculation shows that 
\[
A_F\big(-\tfrac{1}{2}+a-i\eta,-a+\tfrac{1}{2},a-\tfrac{1}{2},\tfrac{1}{2}-a+i\eta\big)=A_F(i\eta),
\]
where
\begin{eqnarray}\label{A_F}
A_F(r)&=&\prod_{p}\bigg(\sum_{h+m=k+n}\frac{a_F(p^m)\overline{a_F}(p^n)\mu_F(p^h)\overline{\mu_F}(p^k)}{p^{-r m+n+(1+r)k}}\bigg)\nonumber\\
&&\qquad\qquad\qquad\textrm{exp}\Bigg(\sum_{l=1}^{\infty}l\big|b_F(p^l)\big|^2\bigg(\frac{2}{p^{l}}-\frac{1}{p^{l(1-r)}}-\frac{1}{p^{l(1+r)}}\bigg)\Bigg),
\end{eqnarray}
and
\[
\frac{\partial^2}{d\alpha d\beta}A_F(\alpha,\beta,\gamma,\delta)\bigg|_{\gamma=\alpha=a-\tfrac{1}{2},\delta=\beta=\tfrac{1}{2}-a+i\eta}=-B_F(i\eta),
\]
where
\begin{eqnarray}\label{B_F}
\!\!\!\!\!\!B_F(r)&=&\sum_{p}(\log p)^2\bigg(-\sum_{h+m=k+n}\frac{a_F(p^m)\overline{a_F}(p^n)\mu_F(p^h)\overline{\mu_F}(p^k)mn}{p^{(n+k)(1+r)}}+\sum_{l=1}^{\infty}\frac{l^3\big|b_F(p^l)\big|^2}{p^{l(1+r)}}\bigg).
\end{eqnarray}
So
\begin{eqnarray*}
g(\eta,t)&=&\bigg(\frac{(F\otimes \overline{F})'}{(F\otimes \overline{F})}\bigg)'(1+i\eta)+\frac{1}{r_{F\otimes \overline{F}}^2}\Big(\frac{ \mathfrak{q}_F(|t|+2)^{d_F}}{(2\pi)^{d_F}}\Big)^{-i\eta}A_F(i\eta)\\
&&\qquad\qquad(F\otimes \overline{F})(1-i\eta)(F\otimes \overline{F})(1+i\eta)-B_F(i\eta).
\end{eqnarray*}
As before, we can extend the range of the inner integral in \eqref{100} to $[-T,T]$ producing an error term of size $O_\varepsilon( T^\varepsilon)$. Hence
\[
I_3=\frac{1}{(2\pi )^2}\int_{-T}^{T}\int_{-2T-i(1-2a)}^{2T-i(1-2a)}h(\eta)g(-\eta,t)d\eta dt+O_\varepsilon(T^{1/2+\varepsilon}).
\]

Next we move the path of integration of the inner integral to the real axis from $-2T$ to $2T$ with a principal value as we pass though $0$. Note that $A_F'(0)=0$, so near $\eta=0$ we have
\begin{eqnarray*}
g(\eta,t)=-\frac{i}{\eta}\log \frac{ \mathfrak{q}_F(|t|+2)^{d_F}}{(2\pi)^{d_F}}+O(1).
\end{eqnarray*}
Thus
\[
I_3=\frac{h(0)}{4\pi}\int_{-T}^{T}\log\frac{ \mathfrak{q}_F(|t|+2)^{d_F}}{(2\pi)^{d_F}}dt+\frac{1}{(2\pi )^2}\int_{-T}^{T}\int_{-2T}^{2T}h(\eta)g(\eta,t)d\eta dt+O_\varepsilon(T^{1/2+\varepsilon}),
\]
after changing the variable $\eta$ to $-\eta$. Summing up we have

\begin{conjecture}\label{RCS}
For $h$ a suitable even test function we have
\begin{eqnarray*}
&&\!\!\!\!\!\!\!\!\!\!\!\!\sum_{-T\leq\gamma_F,\gamma'_F\leq T}h(\gamma_F-\gamma'_F)=\frac{h(0)}{2\pi}\int_{-T}^{T}\log\frac{ \mathfrak{q}_F(|t|+2)^{d_F}}{(2\pi)^{d_F}}dt+\frac{1}{(2\pi)^2}\int_{-T}^{T}\int_{-2T}^{2T}h(\eta)\\
&&\qquad\bigg[\bigg(\log \frac{ \mathfrak{q}_F(|t|+2)^{d_F}}{(2\pi)^{d_F}}\bigg)^2+2\bigg(\bigg(\frac{(F\otimes \overline{F})'}{(F\otimes \overline{F})}\bigg)'(1+i\eta)+\frac{1}{r_{F\otimes \overline{F}}^2}\Big(\frac{ \mathfrak{q}_F(|t|+2)^{d_F}}{(2\pi)^{d_F}}\Big)^{-i\eta}\\
&&\qquad\qquad\qquad A_F(i\eta)(F\otimes \overline{F})(1-i\eta)(F\otimes \overline{F})(1+i\eta)-B_F(i\eta)\bigg)\bigg]d\eta dt+O_\varepsilon(T^{1/2+\varepsilon}),
\end{eqnarray*}
where $A_F(r)$ and $B_F(r)$ are defined as in \eqref{A_F} and \eqref{B_F}.
\end{conjecture}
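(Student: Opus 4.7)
The plan is to express the pair correlation sum $S(F) = \sum_{-T\le \gamma_F,\gamma_F'\le T} h(\gamma_F-\gamma_F')$ as a double contour integral and then evaluate it using Conjecture \ref{RCd}, following the template of Conrey--Snaith [\textbf{CS}]. Specifically, I would pick a rectangular contour $\mathcal{C}$ with vertices at $1-a\pm iT$ and $a\pm iT$ for some fixed $1/2<a<1$, so that all the non-trivial zeros of $F$ in the critical strip with imaginary parts in $[-T,T]$ lie inside $\mathcal{C}$. By the argument principle applied twice,
\[
S(F) = \frac{1}{(2\pi i)^2}\int_{\mathcal{C}}\int_{\mathcal{C}}\frac{F'}{F}(u)\frac{F'}{F}(v)\,h(-i(u-v))\,du\,dv + O_\varepsilon(T^\varepsilon),
\]
where the error accounts for the negligible horizontal edges (their contribution is polylogarithmic under GRH) and for possible boundary effects from zeros near height $T$.

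Next I would decompose the double integral according to whether each vertical line is $a$ or $1-a$, giving four pieces which collapse into three inequivalent contributions $I_1, I_2, 2I_3$. The piece $I_1$, where both lines lie to the right of $1$, is bounded by shifting the contours further right and using absolute convergence of the Dirichlet series for $F'/F$; this yields $O_\varepsilon(T^\varepsilon)$. The piece $I_2$, with both contours on the line $1-a$, is handled by applying the functional equation \eqref{fe} to both factors $F'/F$. The cross-terms involving $\overline{F}'/\overline{F}$ can be shifted right and bounded trivially; the diagonal term involving $(X'/X)(u)(X'/X)(v)$ is pushed onto the critical line and its main contribution comes from the asymptotic $(X'/X)(1/2+it) = -\log(\mathfrak{q}_F(|t|+2)^{d_F}/(2\pi)^{d_F}) + O((|t|+2)^{-1})$, producing the $(\log \mathfrak{q}_F(|t|+2)^{d_F}/(2\pi)^{d_F})^2$ piece in the conjecture after a change of variables $t=v$, $\eta = u-v$ and exploiting the evenness of $h$.

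The main challenge is the cross-term $I_3$, with one contour at $a$ and the other at $1-a$. Here I would again invoke the functional equation to convert $F'/F(v+i\eta)$ into $X'/X(v+i\eta) - \overline{F}'/\overline{F}(1-v-i\eta)$; the $X'/X$ piece contributes $O_\varepsilon(T^\varepsilon)$ by shifting, while the $\overline{F}'/\overline{F}$ piece fits the shape of the ratios-type second derivative covered by Conjecture \ref{RCd} with the specific shifts $\alpha = a-1/2$, $\beta = 1/2-a+i\eta$. Applying Conjecture \ref{RCd} yields an integrand $g(-\eta,t)$ involving $((F\otimes \overline{F})'/(F\otimes \overline{F}))'(1+i\eta)$, a swap-term of size $(\mathfrak{q}_F(|t|+2)^{d_F}/(2\pi)^{d_F})^{-i\eta}$ multiplied by $A_F(i\eta)(F\otimes \overline{F})(1\pm i\eta)/r_{F\otimes\overline{F}}^2$, and an arithmetic $-B_F(i\eta)$ coming from the mixed second derivative of $A_F$.

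The remaining delicate step, and the one I expect to be the main obstacle, is shifting the $\eta$-contour from $\mathrm{Im}\,\eta = -(1-2a)$ down to the real axis. Along this shift, $g(\eta,t)$ has a double pole at $\eta=0$ coming from the double pole of $(F\otimes \overline{F})(1+i\eta)(F\otimes \overline{F})(1-i\eta)$ combined with the regular part $((F\otimes \overline{F})'/(F\otimes \overline{F}))'(1+i\eta)$; to handle this I would use the Selberg Orthogonality expansion derived earlier to verify that $(F\otimes\overline{F})$ has a genuine simple pole with residue $r_{F\otimes\overline{F}}$ at $s=1$, expand $g(\eta,t)$ in a Laurent series at $\eta=0$, and check that the singular terms cancel except for a simple pole with residue proportional to $\log(\mathfrak{q}_F(|t|+2)^{d_F}/(2\pi)^{d_F})$, which the principal value picks up as the $h(0)$ contribution. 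Extending the $t$-range to $[-T,T]$ and symmetrising in $\eta \mapsto -\eta$ (using that $h$ is even) then matches the displayed formula, with the error term $O_\varepsilon(T^{1/2+\varepsilon})$ inherited directly from Conjecture \ref{RCd}.
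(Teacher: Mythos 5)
Your proposal follows essentially the same route as the paper: the same double contour integral over the rectangle with vertices $1-a\pm iT$, $a\pm iT$, the same decomposition into $I_1+I_2+2I_3$, the same use of the functional equation for $I_2$ and of Conjecture \ref{RCd} for $I_3$, and the same final contour shift to the real axis where the double poles cancel (using $A_F'(0)=0$ and $((F\otimes\overline{F})'/(F\otimes\overline{F}))'(1+i\eta)=-\eta^{-2}+O(1)$, the latter being singular rather than ``regular'' as you phrase it) leaving the simple pole with residue $-i\log\frac{\mathfrak{q}_F(|t|+2)^{d_F}}{(2\pi)^{d_F}}$ that produces the $h(0)$ term. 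This matches the paper's derivation in all essentials.
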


\subsection{The form factor}

Throughout this section, we shall denote
\[
X=T^\alpha,\qquad\ell = \log \frac{ \mathfrak{q}_F(|t|+2)^{d_F}}{(2\pi)^{d_F}}\qquad\textrm{and}\qquad \mathcal{L} = \log \frac{ \mathfrak{q}_FT^{d_F}}{(2\pi)^{d_F}}.
\]

We recall that
\begin{eqnarray*}
\tilde{\mathcal{F}}_F(X,T)&=&\sum_{-T\leq\gamma_F,\gamma'_F\leq T}X^{i(\gamma_F-\gamma'_F)}e^{-(\gamma_F-\gamma'_F)^2}\\
&=&\sum_{-T\leq\gamma_F,\gamma'_F\leq T}\cos\big((\gamma_F-\gamma'_F)\log X\big)e^{-(\gamma_F-\gamma'_F)^2}.
\end{eqnarray*}
The function $\tilde{\mathcal{F}}_F(X,T)$ is in a suitable form to apply Conjecture \ref{RCS} with
\[
h(\eta)=\cos(\eta\log X)e^{-\eta^2},
\]
and using that we shall write
\[
\tilde{\mathcal{F}}_F(X,T)=\sum_{-T\leq\gamma_F,\gamma'_F\leq T}h(\gamma_F-\gamma'_F)=J_1+J_2+O_\varepsilon(T^{1/2+\varepsilon}).
\]

Since $h$ is even, we have
\[
\int_{-2T}^{2T}\eta^{2k-1}h(\eta)d\eta  =0,
\]
and
\begin{eqnarray*}
&&\int_{-2T}^{2T}\eta^{2k}h(\eta)d\eta =\sum_{j=0}^{\infty}\frac{(-1)^j(\log X)^{2j}}{(2j)!}\int_{-2T}^{2T}\eta^{2(k+j)}e^{-\eta^2}d\eta\\
&&\qquad\qquad=\sqrt{\pi}\sum_{j=0}^{\infty}\frac{(-1)^j(2k+2j)!}{2^{2k+2j}(2j)!(k+j)!}(\log X)^{2j}+O\Big((2T)^{2k-1}\exp\big(2(\log X)T-4T^2\big)\Big)
\end{eqnarray*}
for any $k\in\mathbb{Z}$. In particular,
\[
\int_{-2T}^{2T}\eta^{2k}h(\eta)d\eta \ll \big(\log X/2\big)^{2k}\exp\big(-(\log X)^2/4\big)+(2T)^{2k-1}\exp\big(2(\log X)T-4T^2\big)
\]
for any $k\geq 0$.

Moreover
\[
\int_{-2T}^{2T}\eta^{2k-1}\cos(\eta\ell)h(\eta)d\eta =0,
\]
and
\begin{eqnarray*}
\int_{-2T}^{2T}\eta^{2k}\cos(\eta\ell)h(\eta)d\eta &=&\sum_{i,j=0}^{\infty}\frac{(-1)^{i+j}\ell^{2i}(\log X)^{2j}}{(2i)!(2j)!}\bigg(\int_{-2T}^{2T}\eta^{2(k+i+j)}e^{-\eta^2}d\eta\bigg)\\
&=&\sqrt{\pi}\sum_{i,j=0}^{\infty}\frac{(-1)^{i+j}(2k+2i+2j)!}{2^{2k+2i+2j}(2i)!(2j)!(k+i+j)!}\ell^{2i}(\log X)^{2j}\\
&&\qquad\qquad+O\Big((2T)^{2k-1}\exp\big(2(\log X+\mathcal{L})T-4T^2\big)\Big)
\end{eqnarray*}
for any $k\in\mathbb{Z}$. In particular,
\begin{eqnarray*}
\int_{-2T}^{2T}\eta^{2k}\cos(\eta\ell)h(\eta)d\eta &\ll& \big((\log X+\ell)/2\big)^{2k}\exp\big(-(\log X-\ell)^2/4\big)\\
&&\qquad\qquad+(2T)^{2k-1}\exp\big(2(\log X+\mathcal{L})T-4T^2\big)
\end{eqnarray*}
for any $k\geq 0$, and hence
\begin{eqnarray*}
&&\!\!\!\!\!\!\!\!\int_{-T}^{T}\int_{-2T}^{2T}\eta^{2k}\cos(\eta\ell)h(\eta)d\eta dt\\
&&\!\!\!\!\!\!\!\!\qquad\ll_\varepsilon \left\{ \begin{array}{ll}
T^{\alpha/d_F+\varepsilon}\mathcal{L}^{2k}+(2T)^{2k}\exp\big(2(\log X+\mathcal{L})T-4T^2\big) & \textrm{if $\alpha<d_F,$}\\
T(\log X)^{2k}\exp\big(-c(\log X)^2\big)+(2T)^{2k}\exp\big(2(\log X+\mathcal{L})T-4T^2\big) & \textrm{if $\alpha>d_F$} 
\end{array} \right. 
\end{eqnarray*}
with some absolute constant $c>0$, for any $k\geq 0$.

Similarly,
\[
\int_{-2T}^{2T}\eta^{2k}\sin(\eta\ell)h(\eta)d\eta  =0,
\]
and
\begin{eqnarray*}
&&\!\!\!\!\!\!\!\!\int_{-T}^{T}\int_{-2T}^{2T}\eta^{2k+1}\sin(\eta\ell)h(\eta)d\eta dt\\
&&\!\!\!\!\!\!\!\!\qquad\ll_\varepsilon \left\{ \begin{array}{ll}
T^{\alpha/d_F+\varepsilon}\mathcal{L}^{2k+1}+(2T)^{2k+1}\exp\big(2(\log X+\mathcal{L})T-4T^2\big) & \textrm{if $\alpha<d_F,$}\\
T(\log X)^{2k+1}\exp\big(-c(\log X)^2\big)+(2T)^{2k+1}\exp\big(2(\log X+\mathcal{L})T-4T^2\big) & \textrm{if $\alpha>d_F$} 
\end{array} \right. 
\end{eqnarray*}
with some absolute constant $c>0$, for any $k\geq 0$.

Expanding various terms in Conjecture \ref{RCS} we have
\begin{eqnarray*}
&&\bigg(\frac{(F\otimes \overline{F})'}{(F\otimes \overline{F})}\bigg)'(1+i\eta)=-\frac{1}{\eta^2}+O(1),\\
&&(F\otimes \overline{F})(1-i\eta)(F\otimes \overline{F})(1+i\eta)=\frac{r_{F\otimes \overline{F}}^2}{\eta^2}+O(1),\\
&&A_F(i\eta)=1+O(\eta^2),\\
&&B_F(ir)=O(1).
\end{eqnarray*}
So
\begin{eqnarray*}
J_2&=&\frac{1}{(2\pi)^2}\int_{-T}^{T}\int_{-2T}^{2T}\big(\ell^2-2\eta^{-2}+2\eta^{-2}\cos(\eta\ell)\big)h(\eta)d\eta dt+E\\
&=&\frac{2}{\pi\sqrt{\pi}}\int_{-T}^{T}\sum_{i=2}^{\infty}\sum_{j=0}^{\infty}\frac{(-1)^{i+j}(2i+2j-2)!}{2^{2i+2j}(2i)!(2j)!(i+j-1)!}\ell^{2i}(\log X)^{2j}dt+E,
\end{eqnarray*}
where
\[
E\ll_{\varepsilon,A} \left\{ \begin{array}{ll}
T^{\alpha/d_F+\varepsilon} & \textrm{if $\alpha<d_F,$}\\
T^{-A} & \textrm{if $\alpha>d_F$} 
\end{array} \right.
\]
for every $A>0$. The double sum in the integral equals
\begin{eqnarray*}
&&\!\!\!\!\!\!\!\!\!\!-\frac{\sqrt{\pi}}{8}\bigg(\big|\log X-\ell\big|\textrm{Erf}\Big(\frac{|\log X-\ell|}{2}\Big)+\big(\log X+\ell\big)\textrm{Erf}\Big(\frac{\log X+\ell}{2}\Big)\bigg)\\
&&\!\!\!\!\!\!\!\!\!\!\qquad\qquad+\frac{\sqrt{\pi}}{4}(\log X)\textrm{Erf}\Big(\frac{\log X}{2}\Big) +O\Big(\exp\big(-(\log X-\ell)^2/4\big)\Big)+O\Big(\mathcal{L}^2\exp\big(-(\log X)^2/4\big)\Big)\\
&&\!\!\!\!\!\!\!\!\ =-\frac{\sqrt{\pi}}{4}\Big(\max\big\{\log X,\ell\big\}-\log X\Big)+O\Big(\mathcal{L}^2\exp\big(-(\log X-\ell)^2/4\big)+\mathcal{L}^2\exp\big(-(\log X)^2/4\big)\Big).
\end{eqnarray*}
Hence
\[
J_2=-\frac{1}{2\pi}\int_{-T}^{T}\Big(\max\big\{\log X,\ell\big\}-\log X\Big)dt+E.
\]

On the other hand, 
\[
J_1=\frac{1}{2\pi}\int_{-T}^{T}\ell dt.
\]
Thus
\begin{eqnarray*}
\tilde{\mathcal{F}}_F(X,T)&=&\left\{ \begin{array}{ll}
\frac{T\log X}{\pi}+O_\varepsilon(T^{\alpha/d_F+\varepsilon})+O_\varepsilon(T^{1/2+\varepsilon}) &\qquad \textrm{if $\alpha<d_F$,}\\
\frac{T\mathcal{L}}{\pi}-\frac{d_FT}{\pi}+O_\varepsilon(T^{1/2+\varepsilon}) & \qquad\textrm{if $\alpha>d_F$.} 
\end{array} \right.
\end{eqnarray*}

\begin{conjecture}
We have
\[
\tilde{\mathcal{F}}_F(X,T)=\frac{T\log X}{\pi}+O_\varepsilon(X^{1/d_F+\varepsilon})+O_\varepsilon(T^{1/2+\varepsilon})
\]
uniformly for $T^{A_1}\leq X\leq T^{A_2}$ for any fixed $0<A_1\leq A_2< d_F$, and
\begin{equation*}
\tilde{\mathcal{F}}_F(X,T)=\frac{T}{\pi}\bigg(d_F\log \frac{T}{2\pi}+\log\mathfrak{q}_F-d_F\bigg)+O_\varepsilon(T^{1/2+\varepsilon})
\end{equation*}
uniformly for $T^{A_1}\leq X\leq T^{A_2}$ for any fixed $d_F<A_1\leq A_2<\infty$.
\end{conjecture}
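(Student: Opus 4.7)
The plan is to specialize the ratios-based identity of Conjecture \ref{RCS} to the even Gaussian test function $h(\eta) = \cos(\eta\log X)\,e^{-\eta^2}$, which reproduces $\tilde{\mathcal{F}}_F(X,T)$ on the left-hand side. Writing the output as $J_1 + J_2 + O_\varepsilon(T^{1/2+\varepsilon})$, where $J_1$ is the diagonal single integral arising from the $h(0)$ term and $J_2$ is the double integral, the task is to evaluate each piece asymptotically and then compare the size of $\log X$ against the size of $\ell = \log(\mathfrak{q}_F(|t|+2)^{d_F}/(2\pi)^{d_F})$.

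First, $J_1 = \frac{1}{2\pi}\int_{-T}^T \ell\,dt$ is an elementary integral whose value is $(T/\pi)(\mathcal{L} - d_F) + O(\log T)$. For $J_2$ I will insert the local expansions
\[
\bigl((F\otimes\overline{F})'/(F\otimes\overline{F})\bigr)'(1+i\eta) = -\eta^{-2}+O(1),\qquad (F\otimes\overline{F})(1-i\eta)(F\otimes\overline{F})(1+i\eta) = r_{F\otimes\overline{F}}^2\eta^{-2}+O(1),
\]
together with $A_F(i\eta)=1+O(\eta^2)$ and $B_F(i\eta)=O(1)$, so that the bracketed integrand reduces to $\ell^2 - 2\eta^{-2}\bigl(1-\cos(\eta\ell)\bigr)$ modulo a bounded remainder whose pairing with the Gaussian weight $h$ contributes only to the admissible error term.

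The main technical step is the uniform evaluation in $t$ of
\[
\int_{-2T}^{2T}\Bigl(\ell^2 - 2\eta^{-2}\bigl(1-\cos(\eta\ell)\bigr)\Bigr)\cos(\eta\log X)\,e^{-\eta^2}\,d\eta.
\]
Using $1-\cos(\eta\ell) = -\sum_{i\ge 1}(-1)^i\eta^{2i}\ell^{2i}/(2i)!$ and expanding $\cos(\eta\log X)$ as a power series, I reduce this to a double sum of Gaussian moments $\int_{\mathbb{R}}\eta^{2k}e^{-\eta^2}d\eta$; the tail beyond $[-2T,2T]$ is $\exp\bigl(2(\log X+\mathcal{L})T-4T^2\bigr)$-small and is absorbed in the error. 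Resumming the double series produces a closed expression in terms of the error function which, after using $\textrm{Erf}(u)=\textrm{sgn}(u)+O(e^{-u^2}/|u|)$ for large $|u|$, collapses to $-\tfrac{\sqrt{\pi}}{4}\bigl(\max\{\log X,\ell\}-\log X\bigr)$ up to errors of size $\mathcal{L}^2 e^{-(\log X-\ell)^2/4}+\mathcal{L}^2 e^{-(\log X)^2/4}$. Combining with $J_1$, the net outcome is
\[
J_1 + J_2 \;=\; \frac{1}{2\pi}\int_{-T}^T \min\{\log X,\ell\}\,dt \;+\; (\textrm{acceptable errors}).
\]

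The two regimes in the conjecture now follow from splitting this $\min$-integral. In the case $X\le T^{A_2}$ with $A_2<d_F$, one has $\ell<\log X$ only on the subset where $|t|\ll X^{1/d_F}$, so $\int_{-T}^T\min\{\log X,\ell\}\,dt = 2T\log X + O\bigl(X^{1/d_F}\log X\bigr)$, giving the first asserted formula. In the case $X\ge T^{A_1}$ with $A_1>d_F$, one has $\ell\le\mathcal{L}\le d_F\log T + O(1)<\log X$ throughout $[-T,T]$, so $\min\{\log X,\ell\}=\ell$ identically and $J_1+J_2$ reduces to $(T/\pi)(d_F\log(T/(2\pi))+\log\mathfrak{q}_F-d_F)$, the second asserted formula. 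The principal obstacle is controlling the double-series resummation and the transition near $\log X = \ell$ with enough uniformity to preserve the stated error terms; the Gaussian smoothing built into $\tilde{\mathcal{F}}_F$ is precisely what furnishes the analytic decay needed for the tail estimates, and no conjectural input beyond Conjecture \ref{RCS} enters the argument.
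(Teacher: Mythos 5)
Your proposal follows essentially the same route as the paper: you apply Conjecture \ref{RCS} with the test function $h(\eta)=\cos(\eta\log X)e^{-\eta^2}$, insert the same local expansions at $\eta=0$, resum the Gaussian moments into an error-function expression that collapses to $-\tfrac{\sqrt{\pi}}{4}\bigl(\max\{\log X,\ell\}-\log X\bigr)$, and split the resulting $\min\{\log X,\ell\}$ integral according to whether $X$ is below or above $T^{d_F}$, recovering the same main terms and error terms. The only difference is cosmetic (you combine $J_1$ and $J_2$ into a single $\min$-integral before splitting regimes, where the paper keeps them separate), so the argument is correct and matches the paper's derivation.
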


\section{Proofs of main theorems}

\subsection{Proof of Theorem A\ref{maintheo}}

We begin by considering
\begin{eqnarray*}
I(X,T)&=&\int_{- T}^{T} \bigg| \sum_{|\gamma_{F}|\leq Z} \frac{X^{i \gamma_{F}}}{1 + (t - \gamma_{F})^{2}} \bigg|^{2}  dt\\
&=&\sum_{-Z\leq\gamma_F,\gamma'_F\leq Z}X^{i(\gamma_F-\gamma'_F)}\int_{-T}^{T}\frac{dt}{\big(1+(t-\gamma_F)^2\big)\big(1+(t-\gamma'_F)^2\big)},
\end{eqnarray*}
with $X,Z\geq T$. Using the fact that $N_F(t+1)-N_F(t)\ll \log(|t|+2)$, we can restrict the summation over the zeros to $-T\leq \gamma_F,\gamma'_F\leq T$ with an error term of size $\ll (\log T)^2$. Similarly, the range of the integration can be extended to $(-\infty,\infty)$ introducing an error term of size $\ll (\log T)^3$. So
\begin{eqnarray*}
I(X,T)&=&\sum_{-T\leq \gamma_F,\gamma'_F\leq T}X^{i(\gamma_F-\gamma'_F)}\int_{-\infty}^{\infty}\frac{dt}{\big(1+(t-\gamma_F)^2\big)\big(1+(t-\gamma'_F)^2\big)}+O\big((\log T)^3\big)\\
&=&\frac{\pi}{2}\, \mathcal{F}_F(X,T)+O\big((\log T)^3\big),
\end{eqnarray*}
and hence from \eqref{250} we have
\begin{equation*}
I(X,T)=\frac{T}{2}\Big(d_F\log \frac{T}{2\pi}+\log\mathfrak{q}_F-d_F\Big)+O\big(T^{1-c}\big)
\end{equation*}
uniformly for $X^{1/A_2}\ll T\ll X^{1/A_1}$. 

Let
\[
a(s)=\frac{(1+\delta)^s-1}{s}.
\]
Then
\[
\big|a(it)\big|^2=4\bigg(\frac{\sin\kappa t}{t}\bigg)^2,
\]
where $\kappa=\frac{\log(1+\delta)}{2}$. So by Lemma \ref{lot} we deduce that
\begin{eqnarray*}
&&\!\!\!\!\!\!\!\!\!\!\int_{-\infty}^{\infty}\big|a(it)\big|^2\bigg| \sum_{|\gamma_{F}|\leq Z} \frac{X^{i \gamma_{F}}}{1 + (t - \gamma_{F})^{2}} \bigg|^{2}dt\\
&&=\pi \kappa\Big(d_F\log\frac{1}{\kappa}+\log\mathfrak{q}_F+(1-\gamma_0-\log 4\pi)d_F\Big)+O\big(\kappa^{1+c}\big)+O_{\varepsilon}\big(\kappa^{1+c_1-\varepsilon}\big)+O_{\varepsilon}\big(\kappa^{1+c_2-\varepsilon}\big)\\
&&=\frac{\pi}{2}\delta\Big(d_F\log\frac{1}{\delta}+\log\mathfrak{q}_F+(1-\gamma_0-\log2\pi)d_F\Big)+O\big(\delta^{1+c}\big)+O_{\varepsilon}\big(\delta^{1+c_1-\varepsilon}\big)+O_{\varepsilon}\big(\delta^{1+c_2-\varepsilon}\big).
\end{eqnarray*}
The values of $T$ for which we have used Lemma \ref{lot} lie in the range
\[
\delta^{-(1-c_1)}\ll T\ll\delta^{-(1+c_2)}
\]
for some $0<c_1,c_2<1$.

Let $J$ be the above integral and $K$ be the same integral with $a(it)$ being replaced by $a(\frac{1}{2}+i\gamma_F)$. We write $J=\int|A|^2$ and $K=\int|B|^2$. Direct calculation shows that
\[
a(s)\ll\min\big\{\delta,1/|s|\big\}\quad\textrm{and}\quad a'(s)\ll\min\big\{\delta^2,\delta/|s|\big\}
\]
for $|\sigma|\leq 1$. Hence, since $N_F(t+1)-N_F(t)\ll \log(|t|+2)$,
\[
A, B\ll \min\big\{\delta,1/|t|\big\}\log(|t|+2)
\]
and
\[
a(it)-a\big(\tfrac{1}{2}+i\gamma_F\big)\ll\big(1+|t-\gamma_F|\big)\, \min\big\{\delta^2,\delta/|t|\big\}.
\]
Thus 
\[
A-B\ll \min\big\{\delta^2,\delta/|t|\big\}\big(\log(|t|+2)\big)^2,
\]
and hence
\[
|A|^2-|B|^2\ll \min\big\{\delta^3,\delta/|t|^2\big\}\big(\log(|t|+2)\big)^3,
\]
so that
\[
J-K\ll \delta^2\bigg(\log\frac{1}{\delta}\bigg)^3.
\]
It follows that
\begin{eqnarray}\label{101}
\int_{-\infty}^{\infty}\bigg| \sum_{|\gamma_{F}|\leq Z} \frac{a(\rho_F)X^{i \gamma_{F}}}{1 + (t - \gamma_{F})^{2}} \bigg|^{2}dt&=&\frac{\pi}{2}\delta\Big(d_F\log\frac{1}{\delta}+\log\mathfrak{q}_F+(1-\gamma_0-\log2\pi)d_F\Big)\\
&&\qquad\qquad+O\big(\delta^{1+c}\big)+O_\varepsilon\big(\delta^{1+c_1-\varepsilon}\big)+O_\varepsilon\big(\delta^{1+c_2-\varepsilon}\big).\nonumber
\end{eqnarray}

Let $S(t)$ be the above sum over the zeros. Its Fourier transform is
\[
\hat{S}(u)=\int_{-\infty}^{\infty}S(t)e(-tu)dt=\pi \sum_{|\gamma_F|\leq Z}a(\rho_F)X^{i\gamma_F}e(-\gamma_Fu)e^{-2\pi|u|}.
\]
By Plancherel's formula the integral in \eqref{101} equals
\begin{eqnarray*}
\frac{\pi}{2}\int_{-\infty}^{\infty}\bigg| \sum_{|\gamma_{F}|\leq Z} a(\rho_F)e^{i\gamma_F(Y+y)} \bigg|^{2}e^{-2|y|}dy,
\end{eqnarray*}
after the change of variables $Y=\log X$, $y=-2\pi u$. Hence
\begin{eqnarray*}
\int_{-\infty}^{\infty}\bigg| \sum_{|\gamma_{F}|\leq Z} a(\rho_F)e^{i\gamma_F(Y+y)} \bigg|^{2}e^{-2|y|}dy&=&\delta\Big(d_F\log\frac{1}{\delta}+\log\mathfrak{q}_F+(1-\gamma_0-\log2\pi)d_F\Big)\\
&&\qquad\qquad+O\big(\delta^{1+c}\big)+O_\varepsilon\big(\delta^{1+c_1-\varepsilon}\big)+O_\varepsilon\big(\delta^{1+c_2-\varepsilon}\big).\nonumber
\end{eqnarray*}
Lemma \ref{taub} leads to
\begin{eqnarray}\label{sumzeros}
\int_{X}^{2X}\bigg| \sum_{|\gamma_{F}|\leq Z} a(\rho_F)x^{\rho_F} \bigg|^{2}dx&=&\frac{3}{2}\delta X^2\Big(d_F \log\frac{1}{\delta}+\log\mathfrak{q}_F+(1-\gamma_0-\log2\pi)d_F\Big)\\
&&\qquad\quad+O\big(\delta^{1+c/2}X^2\big)+O_\varepsilon\big(\delta^{1+c_1/2-\varepsilon}X^2\big)+O_\varepsilon\big(\delta^{1+c_2/2-\varepsilon}X^2\big),\nonumber
\end{eqnarray}
after the change of variable $x=e^{Y+y}$, provided that
\begin{equation}\label{conditionX}
X^{1/A_2}\ll\delta^{-(1-c_1)}<\delta^{-(1+c_2)}\ll X^{1/A_1}.
\end{equation}

Next we use the explicit formula for $\psi_F(x)$ and get
\begin{eqnarray}\label{explicit}
\psi_F(x+\delta x)-\psi_F(x) - m_F\delta x&=&-\sum_{|\gamma_F|\leq Z}a(\rho_F)x^{\rho_F}+O\bigg((\log x)\min\Big\{1,\frac{x}{Z||x||}\Big\}\bigg)\\
&&\quad+O\bigg((\log x)\min\Big\{1,\frac{x}{Z||x+\delta x||}\Big\}\bigg)+O\big(xZ^{-1}(\log xZ)^2\big),\nonumber
\end{eqnarray}
where $||x||=\min_{n}|x-n|$ is the distance from $x$ to the nearest integer. Choosing $Z=X^2$ and using \eqref{sumzeros} we have
\begin{eqnarray*}
&&\int_{X}^{2X}\Big| \psi_F(x+\delta x)-\psi_F(x) - m_F\delta x\Big|^{2}dx=\frac{3}{2}\delta X^2\Big(d_F \log\frac{1}{\delta}+\log\mathfrak{q}_F+(1-\gamma_0-\log2\pi)d_F\Big)\\
&&\qquad\qquad\qquad\qquad+O\big(\delta^{1+c/2}X^2\big)+O_\varepsilon\big(\delta^{1+c_1/2-\varepsilon}X^2\big)+O_\varepsilon\big(\delta^{1+c_2/2-\varepsilon}X^2\big).\nonumber
\end{eqnarray*}
Summing over the dyadic intervals $[2^{-k}X,2^{-k+1}X]$, $1\leq k\leq K$, with 
\begin{equation}\label{conditionK}
2^K=\delta^{(1+c_2)A_1}X
\end{equation}
(so that \eqref{conditionX} still holds with $X$ being replaced by $2^{-K}X$) we obtain
\begin{eqnarray*}
&&\int_{2^{-K}X}^{X}\Big| \psi_F(x+\delta x)-\psi_F(x) - m_F\delta x\Big|^{2}dx\\
&&\qquad\qquad=\frac{(1-4^{-K})}{2}\delta X^2\Big(d_F \log\frac{1}{\delta}+\log\mathfrak{q}_F+(1-\gamma_0-\log2\pi)d_F\Big)\\
&&\qquad\qquad\qquad\qquad+O\big(\delta^{1+c/2}X^2\big)+O_\varepsilon\big(\delta^{1+c_1/2-\varepsilon}X^2\big)+O_\varepsilon\big(\delta^{1+c_2/2-\varepsilon}X^2\big).\nonumber
\end{eqnarray*}
For the integration in the range $[1,2^{-K}X]$ we use the first estimate of Lemma \ref{SVbound}. Hence
\begin{eqnarray*}
V_F(X,\delta)&=&\frac{1}{2}\delta X^2\Big(d_F \log\frac{1}{\delta}+\log\mathfrak{q}_F+(1-\gamma_0-\log2\pi)d_F\Big)+O\big(\delta^{1+c/2}X^2\big)\\
&&\qquad\qquad+O_\varepsilon\big(\delta^{1+c_1/2-\varepsilon}X^2\big)+O_\varepsilon\big(\delta^{1+c_2/2-\varepsilon}X^2\big)+O_\varepsilon\big(\delta^{1-\varepsilon}X^24^{-K}\big),\nonumber
\end{eqnarray*}
and then the theorem follows from \eqref{conditionX} and \eqref{conditionK}.

\subsection{Proof of Theorem B\ref{theoremB1}}

Integrating \eqref{B1condition} by parts we have
\begin{eqnarray*}
\int_{X}^{X_1}\Big |\psi_F(x+\delta x)-\psi_F(x) - m_F\delta x\Big|^{2}x^{-4} dx&=&\frac{1}{2}\delta X^{-2}\Big(d_F \log\frac{1}{\delta}+\log\mathfrak{q}_F+(1-\gamma_0-\log2\pi)d_F\Big)\\
&&\qquad\qquad+O\big(\delta^{1+c}X^{-2}\big)+O_\varepsilon\big(\delta^{1-\varepsilon}X_{1}^{-2}\big)
\end{eqnarray*}
uniformly for $\delta^{-1/B_2}\ll X,X_1\ll \delta^{-1/B_1}$. Similarly, the bound \eqref{SV1} leads to 
\begin{eqnarray*}
\int_{X_1}^{\infty}\Big |\psi_F(x+\delta x)-\psi_F(x) - m_F\delta x\Big|^{2}x^{-4} dx\ll_\varepsilon \delta^{1-\varepsilon}X_{1}^{-2}.
\end{eqnarray*}
Combining these estimates and \eqref{B1condition}, and letting $X_1= \delta^{-1/B_1}$ we get
\begin{eqnarray*}
&&\int_{0}^{\infty}\min\big\{x^2/X^{2},X^{2}/x^2\big\}\Big |\psi_F(x+\delta x)-\psi_F(x) - m_F\delta x\Big|^{2}x^{-2} dx\\
&&\qquad\qquad=\delta \Big(d_F \log\frac{1}{\delta}+\log\mathfrak{q}_F+(1-\gamma_0-\log2\pi)d_F\Big)+O\big(\delta^{1+c}\big)+O_\varepsilon\big(\delta^{1+2/B_1-\varepsilon}X^{2}\big).
\end{eqnarray*}

We now use the explicit formula \eqref{explicit} with $Z=X^2$. Writing $Y=\log X$ and $x=e^{Y+y}$ we obtain
\begin{eqnarray*}
\int_{-\infty}^{\infty}\bigg| \sum_{|\gamma_{F}|\leq Z} a(\rho_F)e^{i\gamma_F(Y+y)} \bigg|^{2}e^{-2|y|}dy&=&\delta \Big(d_F \log\frac{1}{\delta}+\log\mathfrak{q}_F+(1-\gamma_0-\log2\pi)d_F\Big)\\
&&\qquad\qquad+O\big(\delta^{1+c}\big)+O_\varepsilon\big(\delta^{1+2/B_1-\varepsilon}X^{2}\big).
\end{eqnarray*}
Retracing our steps as in the previous subsection leads to
\begin{eqnarray*}
&&\!\!\!\!\!\!\!\!\!\!\int_{-\infty}^{\infty}\bigg(\frac{\sin\kappa t}{t}\bigg)^2\bigg| \sum_{|\gamma_{F}|\leq Z} \frac{X^{i \gamma_{F}}}{1 + (t - \gamma_{F})^{2}} \bigg|^{2}dt\\
&&=\frac{\pi}{4} \kappa\Big(d_F\log\frac{1}{\kappa}+\log\mathfrak{q}_F+(1-\gamma_0-\log 4\pi)d_F\Big)+O\big(\kappa^{1+c}\big)+O_\varepsilon\big(\kappa^{1+2/B_1-\varepsilon}X^{2}\big).
\end{eqnarray*}
Lemma \ref{converselot} then implies that
\begin{eqnarray*}
&&\int_{-T}^{T}\bigg| \sum_{|\gamma_{F}|\leq Z} \frac{X^{i \gamma_{F}}}{1 + (t - \gamma_{F})^{2}} \bigg|^{2}dt\\
&&\qquad\qquad=\frac{T}{2}\Big(d_F\log T+\log \mathfrak{q}_F-(1+\log2\pi)d_F\Big)+O_\varepsilon\Big( T^{3/(3+c)+\varepsilon}\Big)\\
&&\qquad\qquad\qquad\qquad\quad+O_\varepsilon\big( T^{1-2c_1+\varepsilon}\big)+O_\varepsilon\big( T^{1-c_2/4+\varepsilon}\big)+O_\varepsilon\Big( \big(T^3X^2\big)^{B_1/(3B_1+2)+\varepsilon}\Big),
\end{eqnarray*}
provided that $T^{-(1+c_1)}\ll X^{-B_2}<X^{-B_1} \ll T^{-(1-c_2)}$. Moreover, we can restrict the summation over the zeros to $-T\leq\gamma_F,\gamma_F'\leq T$ and extend the range of the integration to $(-\infty,\infty)$ with an error term of size $\ll(\log T)^3$. Finally we choose $c_1$ and $c_2$ such that $T^{-(1+c_1)}= X^{-B_2}$ and $T^{-(1-c_2)}=X^{-B_1}$, and hence obtain the theorem.

\subsection{Proof of Theorem C\ref{secondtheorem}}

Consider the double integral 
\[
\int_{X}^{2X}\int_{H_1}^{H_2}\big|f(x,h)\big|^{2}dhdx,
\]
where $f(x,y)=\psi_F(x+y)-\psi_F(x) - m_Fy$ and $H_1<H_2<2H_1$. Here $H_1\asymp H_2\asymp H$ and $X^{1-B_3}\ll H\ll X^{1-B_1}$. Replacing $h$ by $\delta=h/x$ and changing the order of integration, this is equal to
\begin{eqnarray*}
&&\int_{H_1/2X}^{H_2/2X}\int_{H_1/\delta}^{2X}\big|f(x,\delta x)\big|^2xdxd\delta+\int_{H_2/2X}^{H_1/X}\int_{H_1/\delta}^{H_2/\delta}\big|f(x,\delta x)\big|^2xdxd\delta\\
&&\qquad\qquad+\int_{H_1/X}^{H_2/X}\int_{X}^{H_2/\delta}\big|f(x,\delta x)\big|^2xdxd\delta.
\end{eqnarray*}
By integration by parts, \eqref{2.1} implies that
\begin{eqnarray*}
\int_{X_1}^{X_2}\big|f(x,\delta x)\big|^2xdx=\frac{1}{3}\delta\big (X_{2}^{3}-X_{1}^{3}\big)\bigg(d_F \log\frac{1}{\delta}+\log \mathfrak{q}_F+(1-\gamma_0-\log2\pi)d_F\bigg)+O\big(\delta^{1+c} X^3\big),\nonumber
\end{eqnarray*}
provided that $X_1\asymp X_2\asymp X$. Hence
\begin{eqnarray}\label{601}
&&\int_{X}^{2X}\int_{H_1}^{H_2}\big|f(x,h)\big|^{2}dhdx=\frac{d_F}{2} X\bigg(H_{2}^{2} \log\frac{X}{H_2}-H_{1}^{2}\log\frac{X}{H_1}\bigg)\\
&&\qquad\quad+\frac{1}{4}\bigg(2\log \mathfrak{q}_F+\Big(1-2\gamma_0-2\log2\pi+4\log2\Big)d_F\bigg)X\big(H_{2}^{2}-H_{1}^{2}\big)+O\big(H^2X(H/X)^{c}\big)\nonumber
\end{eqnarray}
uniformly for 
\begin{equation}\label{600}
X^{1-B_3}\ll H\ll X^{1-B_1}.
\end{equation}

We now consider $X^{1-B_3}\ll H\ll X^{1-B_2}$. Summing \eqref{601} over the dyadic intervals $[2^{-k}X,2^{-k+1}X]$, $1\leq k\leq K$, with 
\[
K\asymp\frac{(1-B_1)\log X-\log H}{(1-B_1)\log 2}
\]
(so that \eqref{600} still holds with $X$ being replaced by $2^{-K}X$) we obtain
\begin{eqnarray*}
&&\int_{2^{-K}X}^{X}\int_{H_1}^{H_2}\big|f(x,h)\big|^{2}dhdx=\frac{(1-2^{-K})d_F}{2} X\bigg(H_{2}^{2} \log\frac{X}{H_2}-H_{1}^{2}\log\frac{X}{H_1}\bigg)\\
&&\qquad\qquad+\frac{(1-2^{-K})}{4}\bigg(2\log \mathfrak{q}_F+\Big(1-2\gamma_0-2\log2\pi+4\log2\Big)d_F\bigg)X\big(H_{2}^{2}-H_{1}^{2}\big)\\
&&\qquad\qquad\qquad\qquad-\frac{\big(2-2^{-K}(K+2)\big)(\log 2)d_F}{2}X\big(H_{2}^{2}-H_{1}^{2}\big)+O\big(H^2X(H/X)^{c}\big).
\end{eqnarray*}
Adding up the integration on $[1,2^{-K}X]$ using the second estimate of Lemma \ref{SVbound} we get
\begin{eqnarray}\label{200}
\int_{H_1}^{H_2}\int_{1}^{X}\big|f(x,h)\big|^{2}dxdh&=&\frac{d_F}{2} X\bigg(H_{2}^{2} \log\frac{X}{H_2}-H_{1}^{2}\log\frac{X}{H_1}\bigg)\\
&&\qquad\qquad+\frac{1}{4}\bigg(2\log \mathfrak{q}_F+\Big(1-2\gamma_0-2\log2\pi\Big)d_F\bigg)X\big(H_{2}^{2}-H_{1}^{2}\big)\nonumber\\
&&\qquad\qquad\qquad\qquad+O\big(H^2X(H/X)^{c}\big)+O_\varepsilon\Big(H^{2+1/(1-B_1)+\varepsilon}\Big).\nonumber
\end{eqnarray}

We now deduce \eqref{2.2} from \eqref{200}. In view of \eqref{200} we have
\begin{eqnarray*}
&&\int_{H}^{(1+\eta)H}\int_{1}^{X}\big|f(x,h)\big|^{2}dxdh=\eta H^2 X\bigg(d_F \log\frac{X}{H}+\log \mathfrak{q}_F-(\gamma_0+\log 2\pi)d_F\bigg)\nonumber\\
&&\qquad\qquad\qquad\qquad\qquad+O\bigg(\eta^2H^2X\log\frac{X}{H}\bigg)+O\big(H^2X(H/X)^{c}\big)+O_\varepsilon\Big(H^{2+1/(1-B_1)+\varepsilon}\Big).
\end{eqnarray*}
Let $g(x,h)=f(x,H)$. Since 
\[
\big|f\big|^2-\big|g\big|^2=2\big|f\big|\Big(\big|f\big|-\big|g\big|\Big)-\Big(\big|f\big|-\big|g\big|\Big)^2\ll \big|f\big|\big|f-g\big|+\big|f-g\big|^2 ,
\]
by Cauchy-Schwartz's inequality we get
\[
\int\int\Big(\big|f\big|^2-\big|g\big|^2\Big)\ll \bigg(\int\int \big|f\big|^2\bigg)^{1/2}\bigg(\int\int\big|f-g\big|^2\bigg)^{1/2}+\int\int\big|f-g\big|^2.
\]
As $f(x,h)-g(x,h)=f(x+H,h-H)$, using Lemma \ref{SVbound} we derive that
\begin{eqnarray*}
\int\int\big|f-g\big|^2&=&\int_{H}^{(1+\eta)H}\int_{1}^{X}\big|f(x+H,h-H)\big|^2dxdh\\
&=&\int_{0}^{\eta H}\int_{1+H}^{X+H}\big|f(x,h)\big|^2dxdh\\
&\ll&\eta^2H^2X\Big(\log\frac{X}{H}\Big)^2.
\end{eqnarray*}
Hence
\begin{eqnarray*}
&&\eta H\int_{1}^{X}\Big|\psi_F(x+H)-\psi_F(x)-m_FH\Big|^2dx= \int_{H}^{(1+\eta)H}\int_{1}^{X}\big|g(x,h)\big|^{2}dxdh\nonumber\\
&&\qquad\quad=\eta H^2 X\bigg(d_F \log\frac{X}{H}+\log \mathfrak{q}_F-(\gamma_0+\log 2\pi)d_F\bigg)\nonumber\\
&&\quad\quad\qquad\qquad+O\bigg(\eta^{3/2}H^2X\Big(\log\frac{X}{H}\Big)^{3/2}\bigg)+O\big(H^2X(H/X)^{c}\big)+O_\varepsilon\Big(H^{2+1/(1-B_1)+\varepsilon}\Big),
\end{eqnarray*}
and the theorem follows by choosing 
\[
\eta=\max\Big\{(H/X)^{2c/3},\big(HX^{-(1-B_1)}\big)^{2/3(1-B_1)}\Big\}. 
\]

\end{document}